\theoremstyle{plain}
\newtheorem{theo}{Theorem}[section]
\newtheorem{lemm}[theo]{Lemma}
\newtheorem{prop}[theo]{Proposition} 
\newtheorem{cor}[theo]{Corollary}
\newtheorem{con}[theo]{Conjecture}
\newtheorem*{cons}{Conjecture}
\theoremstyle{definition}
\newtheorem{defi}[theo]{Definition}
\newtheorem{rem}[theo]{Remark}
\def\QQ{{\mathbb Q}}
\def\CC{{\mathbb C}}
\def\PP{{\mathbb P}}
\def\NN{{\mathbb N}}
\def\ZZ{{\mathbb Z}}
\def\AA{{\mathbb A}}
\def\DD{{\mathbb D}}
\def\Ta{{\mathbb T}}
\def\Qp{{\QQ_p}}
\def\Hp{{\mathcal{H}_p}}
\def\GL{{\mathrm{GL}_2(\QQ_p)}}
\def\SL{{\mathrm{SL}_2(\QQ_p)}}
\def\TT{{\mathcal{T}}}
\def\simeq{\cong}
\def\epsilon{{\varepsilon}}
\def\phi{{\varphi}}
\begin{document}
\selectlanguage{english}
\title[$p$-adic automorphic forms and Teitelbaum's $\mathcal{L}$-invariant]{A control theorem for $p$-adic automorphic forms and Teitelbaum's $\mathcal{L}$-invariant}
\author{Peter Mathias Gräf}
\date{\today}
\subjclass[2010]{Primary: 11F03, 11F85; Secondary: 11F67, 20E08}
\address{IWR, University of Heidelberg, Im Neuenheimer Feld 205, 69120 Heidelberg, Germany}
\email{peter.graef@iwr.uni-heidelberg.de}

\maketitle

\begin{abstract}
In this article, we describe an efficient method for computing Teitelbaum's $p$-adic $\mathcal{L}$-invariant. These invariants are realized as the eigenvalues of the $\mathcal{L}$-operator acting on a space of harmonic cocycles on the Bruhat-Tits tree $\TT$, which is computable by the methods of Franc and Masdeu described in \cite{fm}. The main difficulty in computing the $\mathcal{L}$-operator is the efficient computation of the $p$-adic Coleman integrals in its definition. To solve this problem, we use overconvergent methods, first developed by Darmon, Greenberg, Pollack and Stevens. In order to make these methods applicable to our setting, we prove a control theorem for $p$-adic automorphic forms of arbitrary even weight. Moreover, we give computational evidence for relations between slopes of $\mathcal{L}$-invariants of different levels and weights for $p=2$.
\end{abstract}

%Our method to compute $\mathcal{L}$ builds on work of Franc and Masdeu, who described an algorithm to compute fundamental domains for the action of certain arithmetic groups $\Gamma$ on $\TT$ and on work of Darmon, Pollack and Stevens, whose overconvergent methods allow us to efficiently compute certain $p$-adic integrals appearing in the definition of $\mathcal{L}$. While adapting these methods to the specific setting of the $\mathcal{L}$-invariant we also prove a $U_p$-lifting theorem for $p$-adic automorphic forms of arbitrary even weight.

\section{Introduction}
Let $f$ be a newform of even weight $k\geq 2$ for $\Gamma_0(pN)$, where $p$ is prime and $N$ is an integer not divisible by $p$. Let $\chi$ be a Dirichlet character of conductor prime to $pN$ with $\chi(p)= 1$. By the work of Mazur and Swinnerton-Dyer, see \cite{mtt}, there exists a $p$-adic $L$-function $L_p(f,\chi,s)$ attached to $f$ that interpolates the algebraic parts $L^{\text{alg}}(f,\chi,j)$ for $j\in\{1,\dots,k-1\}$ of the special values of the classical $L$-function attached to $f$. If $f$ is an eigenform for the $U_p$-operator with eigenvalue $p^{\frac{k}{2}-1}$, Mazur, Tate and Teitelbaum showed in \cite{mtt} that the order of vanishing of the $p$-adic $L$-function attached to $f$ at the central point $s=\frac{k}{2}$ is one higher than that of the classical $L$-function attached to it. In connection with a $p$-adic formulation of BSD-type conjectures, they formulated the following conjecture.
\begin{cons}[Exceptional zero conjecture]
There exists an invariant $\mathcal{L}_p(f)\in\CC_p$, depending only on the local Galois representation $\sigma_p(f)$ attached to $f$, such that
\[
L'_p(f,\chi,\tfrac{k}{2})=\mathcal{L}_p(f)\cdot L^{\mathrm{alg}}(f,\chi,\tfrac{k}{2}).
\]
\end{cons}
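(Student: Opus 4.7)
The plan is to follow the Greenberg--Stevens strategy, adapted to arbitrary even weight via overconvergent methods on the Bruhat--Tits tree. First I would apply the Jacquet--Langlands correspondence to transfer $f$ to a definite quaternion algebra $B/\QQ$ ramified exactly at $p$ and $\infty$; under this correspondence $f$ becomes a $p$-adic automorphic form realized concretely as a harmonic cocycle on $\TT$ taking values in the space of polynomials of degree $\leq k-2$. This is precisely the framework of Franc--Masdeu \cite{fm} upon which this paper builds, and it is also the context in which Teitelbaum's $\mathcal{L}$-invariant is defined intrinsically, via $p$-adic Coleman integrals on $\Hp$.

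Next, I would invoke the control theorem for $p$-adic automorphic forms of arbitrary even weight (the main technical result of the present paper) to lift the classical harmonic cocycle attached to $f$ to an overconvergent one $F$ with values in locally analytic distributions. Since $f$ lies in the $U_p$-slope-zero subspace (as $a_p(f)=p^{k/2-1}$), the control theorem makes this lift unique up to scalar and, more importantly, allows $F$ to be deformed into a Coleman family $\{F_\kappa\}$ over a neighbourhood of $\kappa=k$ in weight space.

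From this family I would construct a two-variable $p$-adic $L$-function $L_p(\kappa,\chi,s)$ on a weight neighbourhood of $k$ times the cyclotomic line, specializing along $\kappa$ to the Mazur--Tate--Teitelbaum $L_p(f_\kappa,\chi,s)$. A functional equation of the form $L_p(\kappa,\chi,s) = -w(\kappa)\,L_p(\kappa,\chi,\kappa-s)$ with $w(k)=1$ then forces the extra zero at the central point and, upon differentiating at $(\kappa,s)=(k,k/2)$, yields the Greenberg--Stevens-type identity
\[
L_p'(f,\chi,\tfrac{k}{2}) \;=\; -2\,\frac{d\log a_p(\kappa)}{d\kappa}\bigg|_{\kappa=k}\cdot L^{\mathrm{alg}}(f,\chi,\tfrac{k}{2}),
\]
so that the sought invariant is the logarithmic derivative of the $U_p$-eigenvalue along the Coleman family at $\kappa=k$.

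The decisive and hardest step is then to identify this logarithmic derivative with Teitelbaum's $\mathcal{L}$-invariant, i.e.\ with the eigenvalue of the $\mathcal{L}$-operator acting on harmonic cocycles on $\TT$, defined via the $p$-adic Coleman integrals appearing in \cite{fm}. Here I would differentiate the overconvergent family $F_\kappa$ in the weight direction to produce a cocycle on $\TT$ valued in a logarithm-twisted coefficient module, and show, via a direct computation of the Schneider--Teitelbaum pairing, that this derived cocycle coincides with the image of $f$'s cocycle under the $\mathcal{L}$-operator. Matching an abstract weight-derivative along a Hida--Coleman family with the explicit Coleman integral in Teitelbaum's definition is the main obstacle, and this is precisely where the overconvergent lift guaranteed by the control theorem becomes indispensable.
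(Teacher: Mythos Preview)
The paper does not prove this statement at all. The displayed item is a \emph{Conjecture} (environment \texttt{con}), quoted in the introduction as historical background; immediately afterwards the paper records that the Exceptional Zero Conjecture has been proved by others and gives references (\cite{dar01}, \cite{ort}, \cite{em}, \cite{bdi}). The paper's own contributions are the control theorem (Theorem~\ref{stevens}) and the algorithmic computation of the $\mathcal{L}$-operator; it never attempts a proof of the conjecture itself. So there is no ``paper's own proof'' to compare your proposal against.

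Your sketch is, in outline, the Greenberg--Stevens strategy, which is indeed how the conjecture was originally handled, but several details are off. First, the quaternion algebra is wrong: in the \v{C}erednik--Drinfeld/Teitelbaum picture used here, $B$ is the \emph{definite} quaternion algebra of discriminant $N^{-}$, hence ramified at $\infty$ and at the primes dividing $N^{-}$ but \emph{split} at $p$ (the paper states this explicitly: ``discriminant $N^{-}$, coprime to $p$''). Your $B$ ``ramified exactly at $p$ and $\infty$'' would not admit the splitting $\iota\colon B_p\to\mathrm{M}_2(\Qp)$ on which the entire tree picture rests. Second, the relevant $f$ is not of $U_p$-slope zero: the hypothesis is $a_p(f)=p^{k/2-1}$, so the slope is $k/2-1$, which is exactly the ``critical'' slope producing the extra zero; only for $k=2$ is this ordinary. (In the paper's internal normalization for weight $k+2$, the eigenvalue is $p^{k/2}$, matching Theorem~\ref{stevens}.) Third, the control theorem proved here lifts a single eigenform to a rigid analytic automorphic form; it does not by itself produce a Coleman family over a neighbourhood in weight space, which is the input you need for the two-variable $L$-function and the logarithmic-derivative formula. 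That deformation step requires substantially more (eigenvariety machinery or the results of \cite{bdi}), none of which is developed in this paper.
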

After Mazur, Tate and Teitelbaum gave a candidate for $\mathcal{L}_p(f)$  when $f$ corresponds to an elliptic curve in \cite{mtt}, several possible definitions for the invariant $\mathcal{L}_p(f)$ in the general case have been proposed. The first candidate for $\mathcal{L}_p(f)$, denoted by $\mathcal{L}_{\text{Tei}}(f)$, was introduced by Teitelbaum in \cite{tei90} and relies on the Jacquet-Langlands correspondence and the $p$-adic uniformization theory of \v{C}erednik and Drinfeld. A second invariant $\mathcal{L}_{\text{Col}}(f)$ was proposed by Coleman in \cite{col}, based on his theory of $p$-adic integration on modular curves. A third invariant $\mathcal{L}_{\text{Fon}}(f)$ was suggested by Fontaine and Mazur in \cite{maz} using Fontaine's classification of $p$-adic representations. A fourth invariant $\mathcal{L}_{\text{Ort}}(f)$ was defined by Darmon (in weight two) and Orton (in the general case) in \cite{dar01} and \cite{ort} using certain modular form-valued distributions. Finally, a fifth invariant $\mathcal{L}_{\text{Bre}}(f)$ building on $p$-adic Langlands theory is due to Breuil, see \cite{br}. It is known that all of these invariants are equal when they are defined and the exceptional zero conjecture has been proved for all of them, see for example \cite{dar01}, \cite{ort}, \cite{em} and \cite{bdi}. A more detailed account of the various proofs can be found in \cite{col10}.  A fundamental observation in proving the exceptional zero conjecture first due to Greenberg and Stevens, \cite{gs}, is the relation between $\mathcal{L}_p(f)$ and the (essentially unique) $p$-adic family of eigenforms passing through $f$. If we denote this family by 
\[
f_\kappa=\sum_{n=1}^\infty a_n(\kappa)q^n \quad \text{with} \quad f_k=f
\]
where the coefficients $a_n(\kappa)$ are rigid analytic on a disc containing $k$ in the weight space $\mathcal{W}=\mathrm{Hom}_{\mathrm{cont}}(\ZZ_p^\times,\QQ^\times_p)$, this relation is given as
\[
\mathcal{L}_p(f)=-2\operatorname{\mathrm{dlog}}(a_p(\kappa))|_{\kappa=k}.
\]
The qualitative behaviour of $\mathcal{L}_p(f)$ is well known, on the contrary, not a lot of quantitative data on $\mathcal{L}_p(f)$ for arbitrary even weight is available.\par
\vspace{\baselineskip}
In this article, we describe an efficient method for the computation of $\mathcal{L}_p(f)$. For this purpose, we focus on the invariant $\mathcal{L}_{\text{Tei}}(f)$ defined by Teitelbaum. Despite its disadvantages (mainly that it is only defined when $f$ can be associated to a modular form on a Shimura curve via the Jacquet-Langlands correspondence), we highly benefit from its accessibility for explicit computations.\par

By the theorems of Jacquet-Langlands \cite{jac} and \v{C}erednik-Drinfeld \cite{dri}, Teitelbaum's $\mathcal{L}$-invariant is defined in terms of rigid analytic modular forms defined on the $p$-adic upper half plane $\Hp$ with respect to a group $\Gamma$ arising from a definite quaternion algebra defined over $\QQ$. These rigid analytic modular forms admit a nice combinatorial description as $\Gamma$-invariant harmonic cocycles on the Bruhat-Tits tree $\TT$ for $\GL$. Teitelbaum's invariants $\mathcal{L}_p(f)$ are then realized as the eigenvalues of an operator, called the $\mathcal{L}$-operator, defined on the finite-dimensional $\CC_{p}$-vector space of harmonic cocycles. In this article, we describe an efficient method to compute the $\mathcal{L}$-operator (up to a prescribed precision) as a matrix with $p$-adic entries, whose eigenvalues can then be analyzed using standard techniques such as Newton polygons. There are three main difficulties one has to overcome:
\begin{itemize}
\item[(1)] The $\mathcal{L}$-operator in \cite{tei90} is defined over $\CC_p$. From the definitions, it is straightfoward to see that it can in fact be defined over the quadratic unramified extension of $\Qp$. However, in order to keep the running time of our computations as low as possible, we show that the $\mathcal{L}$-operator can in fact be defined over $\QQ_p$ by slightly modifiying one of the two maps entering into its definition.
\item[(2)] In order to describe $\Gamma$-invariant harmonic cocycles on $\TT$ by a finite amount of data, it is necessary to compute a fundamental domain for the action of $\Gamma$ on $\TT$. In \cite{fm}, Franc and Masdeu described and implemented an algorithm to compute such a fundamental domain, which is the basis of our computations.
\item[(3)] There are certain Coleman integrals entering into the definition of the $\mathcal{L}$-operator following \cite{tei90}, whose efficient computation seems to be completely out of reach. Teitelbaum himself proved in \cite{tei90} that one can replace these Coleman integrals by $p$-adic integrals directly coming from harmonic cocycles, and he was able to compute the invariant $\mathcal{L}_p(f)$ for a modular form of weight $4$ by using a process of polynomial approximation and Riemann integration. However, this method is much too slow to compute the $\mathcal{L}$-operator efficiently in the general case. Thankfully, an alternate approach was presented by Greenberg in \cite{grea} building on the overconvergent methods developed by Darmon, Pollack and Stevens in \cite{dp} and \cite{ps} and extended to our setting by Franc and Masdeu in \cite{fm}. Roughly speaking, the data needed for the computation is encoded in values of certain rigid analytic automorphic forms, whose computation is a much simpler task.
\end{itemize}

In our method, we apply the overconvergent methods mentioned above in the specific setting of computing the $\mathcal{L}$-operator. To make these methods applicable, we proof a control theorem (Theorem \ref{stevens}) for $p$-adic automorphic forms of arbitrary even weight. This theorem is a generalization of \cite[Corollary 2]{grea} and stated in \cite[Section 6.1]{fm}. In Remark \ref{gencontr} we explain the full generality in which our theorem holds.\par
This article is arranged as follows. In Section \ref{lop}, we introduce the Bruhat-Tits tree as a skeleton of the $p$-adic upper half plane $\mathcal{H}_p$ and define Teitelbaum's $\mathcal{L}$-operator in terms of $p$-adic integration on $\mathcal{H}_p$. In Section \ref{autf} we introduce $p$-adic automorphic forms and prove the control theorem. In Section \ref{compu}, we describe how to apply the methods of Section \ref{autf} to compute the $\mathcal{L}$-operator building on \cite{fm}. In Section \ref{exa}, we conclude with some examples and give computational evidence for relations between slopes of $\mathcal{L}$-invariants of different levels and weights for $p=2$.

All computations have been done using a \texttt{Sagemath} implementation (see \cite{sage}) of the algorithms. The code builds heavily on the algorithms of Franc and Masdeu and is available on request.  This article is based on the authors Masters's thesis \cite{mine}.\par

The author wishes to thank Gebhard Böckle for suggesting this interesting topic and for his support and encouragement. He is grateful to Marc Masdeu, Tommaso Centeleghe and Samuele Anni for many enlightening discussions and their comments on this article. The author was supported by the DFG via the Forschergruppe 1920 and the SPP 1489. Moreover, part of this work was done during a stay at Concordia University, Montr\'eal funded by a DAAD-Doktorandenstipendium and the author expresses his gratitude to Adrian Iovita and the Mathematics department of Concordia University for their hospitality.\par
This is a pre-print of an article published in the Ramanujan Journal. The final authenticated version is available online at: \url{http://dx.doi.org/10.1007/s11139-019-00160-1}\par

\section{Harmonic cocyles and modular forms}
\label{lop}

In this section, we introduce the Bruhat-Tits tree $\TT$ and the spaces of harmonic cocycles, and describe Teitelbaum's Poisson kernel. Then we develop all the background we need to define the $\mathcal{L}$-operator.

\subsection{The Bruhat-Tits tree}
\label{btt}
\begin{defi}
The \emph{Bruhat-Tits tree} $\mathcal{T}$ for $\text{GL}_2(\QQ_p)$ is the graph whose vertices are the homothety classes of $\ZZ_p$-lattices in $\QQ_p^2$. Two vertices $v$ and $w$ are joined by an edge if there exist representative lattices $L$ and $L'$ such that
\[
pL\subsetneq L'\subsetneq L.
\]
For a given lattice $L$ in $\QQ_p^2$ we denote the homothety class by $[L]$. It is well known that $\mathcal{T}$ is a $p+1$-regular tree. The \emph{set of vertices} of $\mathcal{T}$ is denoted by $\mathcal{T}_0$ and the \emph{set of edges} by $\mathcal{T}_1$. For an edge $e\in\mathcal{T}_1$ we denote by $s(e)\in\mathcal{T}_0$ the \emph{source} and by $t(e)\in\mathcal{T}_0$ the \emph{target}. The edge $\overline{e}\in\mathcal{T}_1$ with $s(\overline{e})=t(e)$ and $t(\overline{e})=s(e)$ is called the \emph{opposite edge}. Furthermore we denote the \emph{distance} between two vertices $v,w\in\mathcal{T}_0$ by $d(v,w)$.
\end{defi}

The group $\GL$ acts transitively on $\TT$ as follows. For a vertex $[L]\in\TT_0$ and $g\in\GL$ we set $g[L]=[g L]$, where the action on the lattice is given by matrix multiplication with respect to the standard coordinates. We set $v_0=[\ZZ_p^2]$. The vertex $v_1=\left(\begin{smallmatrix} 0&1\\ p&0\end{smallmatrix}\right) v_0$ is adjacent to $v_0$. Let $e_0$ denote the edge with $s(e_0)=v_1$ and $t(e_0)=v_0$. We have 
\[
\operatorname{\mathrm{Stab}}_\GL(v_0)=\QQ^\times_p\textnormal{GL}_2(\ZZ_p)\quad \text{and}\quad
\operatorname{\mathrm{Stab}}_\GL(e_0)=\QQ^\times_p\Gamma_0(p\ZZ_p),
\]
where $\Gamma_0(p\ZZ_p)=\left\{\left(\begin{smallmatrix} a &b\\ c&d\end{smallmatrix}\right)\in\mathrm{GL}_2(\ZZ_p) \ \middle| \ p\mid c\right\}$. As a consequence, we obtain natural bijections
$\GL/\QQ^\times_p\textnormal{GL}_2(\ZZ_p)\simeq \TT_0$ and $\GL/\QQ^\times_p\Gamma_0(p\ZZ_p)\simeq \TT_1$.

The tree $\TT$ can be viewed as a skeleton of the rigid analytic $p$-adic upper half plane $\Hp=\PP^1(\CC_p)\setminus \PP^1(\QQ_p)$. A detailed description of its properties can be found in \cite[Section 1]{dt}. The group $\GL$ acts on $\mathcal{H}_p$ via fractional linear transformations.  The following theorem gives a precise description of the relation between the two objects. Let $\mathcal{X}_0$ denote the following standard affinoid,
\[
\mathcal{X}_0=\{z\in\PP^1(\CC_p) \mid |z|_p\leq 1, |z-j|_p\geq 1 \text{ for } j=0,\dots,p-1\}\subset\Hp
\]
where $|\cdot|_p$ denotes the $p$-adic absolute value, normalized such that $|p|_p=1$.
We also set $W_0=\{z\in\PP^1(\CC_p)\mid 1<|z|_p<p\}\subset\Hp$ and denote by $\TT_\QQ$ the geometric realization of $\TT$ over $\QQ$.

\begin{prop}[{\cite[Section 1.3]{dt}}]
\label{redmap}
There is a continuous and  surjective $\GL$-equivariant map
\[
\operatorname{\mathrm{red}}\colon\Hp\rightarrow\TT_\QQ
\]
that for $g\in\GL$ maps the affinoids $g\mathcal{X}_0$ to the vertices $gv_0$ of $\TT_\QQ$ and maps the annuli $gW_0$ to the edges $ge_0$ of $\TT_\QQ$. \end{prop}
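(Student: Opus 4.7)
The plan is to construct $\operatorname{red}$ first on the reference pair $(\mathcal{X}_0, W_0)$ and then to extend to all of $\Hp$ by $\GL$-equivariance, using the transitivity statements and the explicit stabilizers already recorded just before the proposition.

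First I would declare $\operatorname{red}(z) = v_0$ for every $z \in \mathcal{X}_0$. On the open annulus $W_0 = \{z : 1 < |z|_p < p\}$ I would define $\operatorname{red}$ by sending $z$ to the point of the edge $e_0$ whose parameter, under the natural identification of the open edge in the geometric realization with the interval $(0,1)$, equals $\log_p|z|_p$. The orientation is fixed so that $|z|_p \to 1^+$ approaches $t(e_0)=v_0$ and $|z|_p \to p^-$ approaches $s(e_0)=v_1$; this is what is needed in order for the affinoid and annulus descriptions to glue continuously along the boundary circles $|z|_p=1$ (where $\overline{W_0}$ meets $\mathcal{X}_0$) and $|z|_p=p$ (where it meets $\bigl(\begin{smallmatrix}0&1\\p&0\end{smallmatrix}\bigr)\mathcal{X}_0$).

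Next I would set $\operatorname{red}(gz):=g\cdot\operatorname{red}(z)$ for every $g\in\GL$ and $z\in\mathcal{X}_0\cup W_0$, and check well-definedness. If $gz=g'z'$ with $z,z'\in\mathcal{X}_0$, then the Möbius transformation $g^{-1}g'$ sends $z'$ into $\mathcal{X}_0$; one shows directly that the subgroup of $\GL$ stabilising the affinoid $\mathcal{X}_0$ as a set coincides with the vertex stabiliser $\QQ_p^\times\operatorname{GL}_2(\ZZ_p)$ already identified, so that $gv_0=g'v_0$. The analogous point for $W_0$ uses that elements of $\QQ_p^\times\Gamma_0(p\ZZ_p)$ act on $W_0$ by transformations $z\mapsto(az+b)/(cz+d)$ with $a,d\in\ZZ_p^\times$, $c\in p\ZZ_p$, $b\in\ZZ_p$; for $z\in W_0$ one has $|cz+d|_p=1$ and $|az+b|_p=|z|_p$, so the stabiliser preserves $|z|_p$ and therefore fixes each interior point of $e_0$.

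The heart of the argument is to show that the reference sets and their $\GL$-translates cover $\Hp$, i.e.\ $\Hp=\bigcup_{g}g\mathcal{X}_0\cup\bigcup_{g}gW_0$. One approach is to use the standard admissible decomposition of $\PP^1(\CC_p)$: $\PP^1(\CC_p)$ is the union of $\mathcal{X}_0$ and the $p+1$ open residue disks of radius one around $0,1,\dots,p-1,\infty$, and the annulus $W_0$ interpolates between $\mathcal{X}_0$ and the affinoid attached to $v_1$; iterating by $\GL$ exhausts $\Hp$ since every vertex of $\TT$ is a $\GL$-translate of $v_0$ and every edge of $\TT$ is a $\GL$-translate of $e_0$. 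Continuity then follows from the explicit local descriptions (constant on each $g\mathcal{X}_0$; of the form $z\mapsto\log_p|g^{-1}z|_p$ composed with the edge parameterisation on each $gW_0$) together with the boundary matching built in to Step 1. Surjectivity is a direct consequence of the transitivity of $\GL$ on $\TT_0$ and $\TT_1$.

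The main obstacle will be the covering step, which is the rigid-analytic incarnation of the statement that $\TT_\QQ$ is a combinatorial skeleton of $\Hp$: one needs to verify carefully that every $z\in\Hp$ really does lie in some $g\mathcal{X}_0$ or some $gW_0$, and that the associated vertex or edge is uniquely determined up to the gluing. Everything else is a bookkeeping exercise with stabilisers and Möbius transformations.
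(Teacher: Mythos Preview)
The paper does not supply its own proof of this proposition; it is simply quoted from \cite[Section 1.3]{dt} and used as input. So there is nothing to compare against, and your sketch stands on its own.

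Your outline is the standard construction and is essentially correct. Two small points are worth tightening. First, in the well-definedness check on $W_0$ you only treat elements of $\QQ_p^\times\Gamma_0(p\ZZ_p)$, which is the stabiliser of the \emph{oriented} edge $e_0$; the set-stabiliser of the annulus $W_0$ in $\GL$ is larger and contains $\bigl(\begin{smallmatrix}0&1\\p&0\end{smallmatrix}\bigr)$, which swaps the two boundary components. You should note (it is immediate) that this element sends $|z|_p$ to $p/|z|_p$, hence the edge parameter $t$ to $1-t$, exactly matching its action on the unoriented geometric edge. Second, the covering step you flag as the ``main obstacle'' really amounts to the disjointness and exhaustion of the $\GL$-translates of $\mathcal{X}_0$ and $W_0$; this is where the genuine rigid-analytic content lies (it is the semistable reduction picture for $\Hp$), and in practice one appeals to the treatment in \cite{dt} or \cite{gvdp} rather than reproving it from scratch.
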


For an arbitrary edge $e\in\TT_1$ we denote the open annulus $\mathrm{red}^{-1}(e)$ by $W_e$. Let $\mathcal{B}$ denote the set of compact open balls in $\PP^1(\Qp)$. The group $\GL$ acts on $\mathcal{B}$ via fractional linear transformations. For an edge $e$ we denote by $\TT_e$ the maximal subtree of $\TT$ containing $e$ and not containing any other edge $e'$ with $s(e')=s(e)$ and by $V_e$ the closure of $\operatorname{\mathrm{red}}^{-1}(\TT_e)$ in $\PP^1(\CC_p)$. Then we obtain the following (see \cite[Section 5.2, Proof of Theorem 5.9]{dar}).

\begin{prop}
\label{ballbij}
The assignment $e\mapsto U_e=V_e\cap\PP^1(\Qp)$ gives rise to a $\GL$-equivariant bijection $\TT_1\rightarrow\mathcal{B}$ mapping $e_0$ to $\ZZ_p$. 
\end{prop}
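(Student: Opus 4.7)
The plan is to exploit $\GL$-equivariance and transitivity to reduce the statement to a single base case. Since $\operatorname{red}$ is $\GL$-equivariant (Proposition \ref{redmap}) and the operations of forming $\TT_e$, pulling back by $\operatorname{red}$, closing in $\PP^1(\CC_p)$ and intersecting with $\PP^1(\Qp)$ all commute with the $\GL$-action, the assignment $e \mapsto U_e$ is automatically $\GL$-equivariant. The action of $\GL$ on $\TT_1$ is transitive with stabilizer $\operatorname{Stab}_\GL(e_0) = \QQ^\times_p\Gamma_0(p\ZZ_p)$ recalled in Section \ref{btt}, and $\GL$ acts transitively on $\mathcal{B}$ by fractional linear transformations, since every compact open ball in $\PP^1(\Qp)$ is a Möbius image of $\ZZ_p$. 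It therefore suffices to verify that $U_{e_0} = \ZZ_p$ and that $\operatorname{Stab}_\GL(\ZZ_p) = \QQ^\times_p \Gamma_0(p\ZZ_p)$; together with equivariance and the transitivity of both actions, these yield a well-defined $\GL$-equivariant bijection $\TT_1 \cong \GL/\QQ^\times_p\Gamma_0(p\ZZ_p) \cong \mathcal{B}$.

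For the first equality, I would use Proposition \ref{redmap} to describe $\operatorname{red}^{-1}(\TT_{e_0})$ explicitly. By construction, $\TT_{e_0}$ is the half-tree consisting of $v_0$, $v_1$, the edge $e_0$, and all edges and vertices that can be reached from $v_0$ without re-crossing $e_0$. Its preimage under $\operatorname{red}$ is therefore the union of the affinoids $g\mathcal{X}_0$ and annuli $gW_0$ sitting over these vertices and edges, all contained in the closed unit disc $\{z \in \PP^1(\CC_p) : |z|_p \leq 1\}$, since $\mathcal{X}_0$ and $W_0$ already sit inside and the translations parameterizing the half-tree preserve this disc. Passing to the closure and intersecting with $\PP^1(\Qp)$ produces exactly $\ZZ_p$, using the classical identification of the ends of $\TT$ with $\PP^1(\Qp)$: the ends lying on the $v_0$-side of $e_0$ are precisely $\ZZ_p$, while those on the $v_1$-side correspond to $\PP^1(\Qp)\setminus\ZZ_p$.

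For the stabilizer equality, the inclusion $\QQ^\times_p\Gamma_0(p\ZZ_p) \subset \operatorname{Stab}_\GL(\ZZ_p)$ is immediate: the center acts trivially on $\PP^1(\Qp)$, and for $\bigl(\begin{smallmatrix}a&b\\c&d\end{smallmatrix}\bigr)\in\Gamma_0(p\ZZ_p)$ one has $a,d\in\ZZ_p^\times$, $b\in\ZZ_p$, $c\in p\ZZ_p$, so $(az+b)/(cz+d)\in\ZZ_p$ for every $z\in\ZZ_p$. For the reverse inclusion, a direct matrix analysis -- normalizing a stabilizing $g$ to lie in $M_2(\ZZ_p)$ with minimal-valuation entry a unit, then evaluating at $z=0$ and $z=\infty$ to constrain the columns -- shows that $g$ already lies in $\QQ^\times_p\Gamma_0(p\ZZ_p)$. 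The main obstacle will be the closure step in the computation of $U_{e_0}$: one has to rule out any stray points of $\PP^1(\Qp)\setminus\ZZ_p$ appearing in the closure of $\operatorname{red}^{-1}(\TT_{e_0})$, which requires a careful control of how the affinoids and annuli accumulate toward the boundary $\PP^1(\Qp)\subset\PP^1(\CC_p)$.
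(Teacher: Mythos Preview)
The paper does not supply its own proof of this proposition; it simply cites \cite[Section~5.2, Proof of Theorem~5.9]{dar}. Your orbit--stabilizer strategy --- check $\GL$-equivariance, verify $U_{e_0}=\ZZ_p$, and match the stabilizer $\operatorname{Stab}_{\GL}(\ZZ_p)$ with $\QQ_p^\times\Gamma_0(p\ZZ_p)$ --- is a sound way to fill in the details, and the stabilizer computation you sketch does go through.

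There is, however, a factual slip in your base-case argument. You assert that $\operatorname{red}^{-1}(\TT_{e_0})$ sits inside the closed unit disc because ``$\mathcal{X}_0$ and $W_0$ already sit inside''. This is false: by definition $W_0=\{1<|z|_p<p\}$ lies \emph{outside} $\{|z|_p\le 1\}$, and the affinoid $g_0\mathcal{X}_0$ over the leaf $v_1=s(e_0)$ sits in $\{|z|_p=p\}$. So $\operatorname{red}^{-1}(\TT_{e_0})$ genuinely protrudes from the unit disc. Fortunately this does not harm the conclusion: both $W_0$ and $g_0\mathcal{X}_0$ are already closed in $\PP^1(\CC_p)$ --- the ultrametric inequality makes the annulus $\{1<|z|_p<p\}$ clopen, since any sequence converging to a point of absolute value $1$ or $p$ must eventually have that same absolute value --- and both lie in $\Hp$, hence contribute nothing to $V_{e_0}\cap\PP^1(\Qp)$. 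The $\Qp$-points of the closure therefore come entirely from the preimage of the $v_0$-side of the half-tree, which is $\{z\in\Hp:|z|_p\le 1\}$, and whose closure meets $\PP^1(\Qp)$ in exactly $\ZZ_p$. So your argument is recoverable, but the containment claim is wrong as stated and should be replaced by this closedness observation.
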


Consequently, we have 
\[U_{\overline{e}}=\PP^1(\Qp)\setminus U_{e} \quad \text{for } e\in\TT_1 \quad \text{and}\quad
\PP^1(\Qp)=\bigsqcup_{s(e)=v}U_e \quad \text{for } v\in\TT_0.
\]

Now we consider group actions on the tree $\TT$. We are only interested in groups arising from rational definite quaternion algebras as follows.  let $B$ be a definite quaternion algebra over $\QQ$ of discriminant $N^{-}$, coprime to $p$. Let $N^+$ be a positive integer coprime to $pN^{-}$ and $R$ be an Eichler $\ZZ$-order of level $N^{+}$ in B. Then $R[\frac{1}{p}]$ is an Eichler $\ZZ[\frac{1}{p}]$-order. Let $\Gamma^{(p)}_{N^{+},N^{-}}=R[\frac{1}{p}]^\times_1$ denote the group of elements of reduced norm $1$. We fix a splitting $\iota\colon B_p\rightarrow\mathrm{M}_2(\QQ_p)$ such that $\iota(R_p)=\mathrm{M}_2(\ZZ_p)$, where $R_p=R\otimes_\ZZ\ZZ_p$. We may regard $\Gamma^{(p)}_{N^{+},N^{-}}$ as a subgroup of $\SL$ via the splitting and we obtain an action on $\TT$. In the sequel, we drop the dependencies on $p$, $N^-$ and $N^{+}$ and write $\Gamma=\Gamma^{(p)}_{N^+,N^-}$. We collect some important results on the action of $\Gamma$ on $\mathcal{T}$.

\begin{prop}[{\cite[Section I.3]{gvdp} and \cite[Section II.1, Theorem 5]{se}}]~
\label{actprop}
\begin{itemize}
\item[(a)] The group $\Gamma\subset\SL$ is finitely generated, discrete and cocompact and acts without inversion on $\TT$. The quotient $\Gamma\textbackslash\mathcal{T}$ is a finite graph.
\item[(b)] The stabilizers $\operatorname{\mathrm{Stab}}_\Gamma(v)$ and $\operatorname{\mathrm{Stab}}_\Gamma(e)$ are finite for all $v\in\TT_0$, $e\in\TT_1$.
\end{itemize}
\end{prop}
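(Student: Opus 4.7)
The plan is to first establish the arithmetic statements (discreteness and cocompactness of $\Gamma$ in $\SL$, plus finite stabilizers) and then deduce the combinatorial ones (finite generation, finite quotient, no inversions) from Bass--Serre theory. For discreteness I would argue adelically: the global norm-one units $B^{(1)}$ embed diagonally and discretely into $B_\infty^{(1)}\times B_p^{(1)}\times\prod_{\ell\neq p}B_\ell^{(1)}$. Since $B$ is definite, $B_\infty^{(1)}$ is compact, and at each finite place $\ell\neq p$ the group $R_\ell^\times\cap B_\ell^{(1)}$ is a compact open subgroup of $B_\ell^{(1)}$. Every element of $R[\frac{1}{p}]^\times_1$ automatically lies in $R_\ell$ for $\ell\neq p$ (inverting $p$ is harmless there), so $\Gamma$ embeds as a discrete subgroup of a product $\mathrm{(compact)}\times B_p^{(1)}\times\mathrm{(compact)}$. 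The projection to $B_p^{(1)}\cong\SL$ is injective (because $\iota$ is an isomorphism) and preserves discreteness by extracting a convergent subsequence in the compact factors.

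Cocompactness is the central arithmetic input, and I expect this step to be the main obstacle. It ultimately rests on the finiteness of the class number, that is on $B^\times\backslash(B\otimes\AA)^\times/U$ being finite for every open compact $U$ in the finite adeles, together with compactness at infinity. Rather than rederive it, I would cite \cite[Section I.3]{gvdp}, where cocompactness of $\Gamma$ in $\SL$ is established in precisely this setup. Once discreteness and cocompactness in $\SL$ are in hand, Serre's theorem on groups acting on locally finite trees (\cite[Section II.1, Theorem 5]{se}) will yield finite generation of $\Gamma$ together with the finiteness of the quotient graph $\Gamma\backslash\TT$.

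To rule out inversions I would exploit the bipartite structure of $\TT$: for $L=g\ZZ_p^2$ the parity $v_p(\det g)\bmod 2$ is independent of the choice of basis (because $\det\mathrm{GL}_2(\ZZ_p)=\ZZ_p^\times$) and of the homothety class (rescaling $L$ by $c$ multiplies $\det g$ by $c^2$), and it flips across every edge. Since every element of $\SL$ has determinant $1$, the action preserves this $\ZZ/2\ZZ$-coloring and cannot swap adjacent vertices. For part (b), I would observe that $\operatorname{\mathrm{Stab}}_\Gamma(v)=\Gamma\cap\operatorname{\mathrm{Stab}}_\SL(v)$, and that $\operatorname{\mathrm{Stab}}_\SL(v)$ is a compact subgroup of $\SL$ (a $\GL$-conjugate of $\mathrm{SL}_2(\ZZ_p)$); the intersection is then a discrete subgroup of a compact group, hence finite, and the argument for edge stabilizers is identical.
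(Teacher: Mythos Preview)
The paper does not actually prove this proposition; it simply records it with the two citations in the header and moves on. Your sketch is correct and is essentially what one extracts from those references: discreteness via the adelic/definiteness argument, cocompactness as the arithmetic input from \cite{gvdp}, finite stabilizers as discrete $\cap$ compact, no inversions from the bipartite type on $\TT$ preserved by $\SL$, and finite generation from Bass--Serre theory.

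One small sharpening of the logic: the finiteness of $\Gamma\backslash\TT$ is not really a consequence of Serre's theorem but follows directly from cocompactness. The set of (even) vertices is $\SL/\mathrm{SL}_2(\ZZ_p)$ and the set of unoriented edges is $\SL/(\SL\cap\Qp^\times\Gamma_0(p\ZZ_p))$, both quotients by \emph{open} compact subgroups; hence $\Gamma\backslash\TT$ is a quotient of the compact space $\Gamma\backslash\SL$ by an open equivalence relation and is finite. Serre's theorem in \cite{se} then takes the finite quotient graph together with the finite vertex and edge stabilizers and returns the finite presentation (in particular finite generation) of $\Gamma$. Also, your parenthetical ``because $\iota$ is an isomorphism'' is slightly misplaced: injectivity of the projection $\Gamma\to B_p^{(1)}$ comes from $B\hookrightarrow B_p$, while $\iota$ only serves to identify $B_p^{(1)}$ with $\SL$. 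Neither point affects the correctness of your argument.
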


\subsection{Harmonic cocycles}
\label{coc}
Let $V$ be a finite dimensional $\Qp$-vector space with a left action of $\Gamma$.
We denote by $C(\mathcal{T},V)$ the $\Qp$-vector space of $V$-valued functions on the edges of the Bruhat-Tits tree satisfying $c(\overline{e})=-c(e)$ for all $e\in\TT_1$. There is a left action of $\Gamma$ on $C(\mathcal{T},V)$ given by
\[
(\gamma\cdot c)(e)=\gamma\cdot(c(\gamma^{-1}e)),
\]
where $c\in C(\mathcal{T},V)$, $e\in\mathcal{T}_1$, $\gamma\in\Gamma$. We set $C(\Gamma,V)=C(\mathcal{T},V)^\Gamma$.
\begin{defi}
A function $c\in C(\mathcal{T},V)$ is called a \emph{harmonic cocycle} on $\mathcal{T}$ if
\begin{align*}
\sum_{s(e)=v}c(e)=0 \quad \text{for all } v\in\mathcal{T}_0.
\end{align*}
The space of harmonic cocycles is denoted by $C_h(\mathcal{T},V)$. It is easily verified that this space is $\Gamma$-stable and thus we may set $C_h(\Gamma,V)=C_h(\mathcal{T},V)^\Gamma$.
\end{defi}
The $\Qp$-vector space we are mainly interested in is given as follows. For an even integer $k\geq0$, we denote by $\mathcal{P}_k\subset\QQ_p[x]$ the finite dimensional $\Qp$-vector space of polynomials of degree at most $k$. We define a right action of $\text{GL}_2(\QQ_p)$ on $\mathcal{P}_k$ as follows.
\[
(P\cdot_k g)(x)=\det(g)^{-\frac{k}{2}}(cx+d)^kP\left(\frac{ax+b}{cx+d}\right), \quad \text{for } P\in\mathcal{P}_k,\ g=\begin{pmatrix} a&b\\c&d \end{pmatrix}\in\text{GL}_2(\QQ_p).
\]
Let $V_k=\text{Hom}_{\QQ_p}(\mathcal{P}_k,\QQ_p)$ denote the linear dual of $\mathcal{P}_k$ with the induced left action of $\text{GL}_2(\QQ_p)$. Since $V_k$ is finite dimensional, the space $C_h(\Gamma, V_k)$ is finite dimensional by Proposition \ref{actprop}.

\subsection{Rigid analytic modular forms}
\label{poi}
For an even integer $k\geq 0$ let $A_k^\mathrm{loc}$ denote the $\Qp$-vector space of $\Qp$-valued functions on $\PP_1(\Qp)$ which are locally analytic, except possibly for a pole at $\infty$ of order at most $k$. Let $\mathcal{P}_k^\mathrm{loc}\subset A_k^\mathrm{loc}$ be the subspace of locally polynomial functions of degree at most $k$ in one variable over $\Qp$. We endow $A_k^\mathrm{loc}$ with the Fr\'echet topology, \cite[Definition 2.1.4]{dt}. The group $\GL$ acts on $A_k^\mathrm{loc}$ as follows.
\[
(f\cdot_k g)(x)=\det(g)^{-\frac{k}{2}}(cx+d)^kf\left(\frac{ax+d}{cx+d}\right), \quad \text{for }  f\in A_k^\mathrm{loc},\  g=\begin{pmatrix}a&b\\ c&d\end{pmatrix}\in\GL.
\]
There is an induced action on $\mathcal{P}_k^\mathrm{loc}$ which is compatible with the inclusion $\mathcal{P}_k\subset\mathcal{P}_{k}^\mathrm{loc}$. For a more detailed description of these spaces see \cite[Section 2.1]{dt}. Furthermore, let $\mathcal{K}=\{U\subset\PP^1(\Qp)\ | \  U \text{ compact open}\}$.

\begin{defi}~
\begin{itemize}
\item[(a)] A \emph{measure on $\mathcal{P}_k^\mathrm{loc}$} is an element $\mu\in\text{Hom}_{\Qp}(\mathcal{P}_k^\mathrm{loc},\Qp)$. For $f\in\mathcal{P}_k^\mathrm{loc}$, $U\in\mathcal{K}$ we write
\[
\int_Uf(x)\text{d}\mu(x)=\mu(f\chi_U).
\]
\item[(b)] A \emph{measure on $A_k^\mathrm{loc}$} is an element $\mu\in\text{Hom}_{\text{cont}}(A_k^\mathrm{loc},\Qp)$. For $f\in A_k^\mathrm{loc}$, $U\in\mathcal{K}$ we write
\[
\int_Uf(x)\text{d}\mu(x)=\mu(f\chi_U).
\]
\end{itemize}
\end{defi}

In particular, when integrating a convergent Taylor series against such a measure, the sum and integral may be interchanged. We want to attach to a harmonic cocycle $c\in C_h(\Gamma,V_k)$ a measure $\mu_c$ on $\mathcal{P}_k^\mathrm{loc}$ and we will describe how to extend this measure uniquely to a measure on $A_k^\mathrm{loc}$.  We define this measure by
\[
\int_{U_e}P(x)\text{d}\mu_c(x)=c(e)(P)\in \Qp 
\]
for $P \in\mathcal{P}_k$. Note that this completely determines the measure on $\mathcal{P}_k^\mathrm{loc}$. The properties of $c$ have the following consequences.

\begin{lemm}
\label{zero}
The measure defined by $c\in C_h(\Gamma,V_k)$ has the following properties:
\begin{itemize}
\item[(a)] $\int_{\PP^1(\QQ_p)}P(x)\textnormal{d}\mu_c(x)=0$ for all $P\in\mathcal{P}_k$.
\item[(b)] $\int_{\gamma U} f(x)\textnormal{d}\mu_c(x)=\int_{U} (f\cdot_k\gamma)(x)\textnormal{d}\mu_c(x)$ for $\gamma\in\Gamma$, $f\in\mathcal{P}_k^\mathrm{loc}$, $U\in\mathcal{K}$.
\end{itemize}
\end{lemm}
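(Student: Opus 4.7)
The plan is to verify both parts by reducing to the defining identity $\int_{U_e} P \, d\mu_c = c(e)(P)$ for $P \in \mathcal{P}_k$ and edges $e \in \TT_1$, combined with the structural properties of $c$ (harmonicity, $\Gamma$-invariance) and the $\GL$-equivariance of the bijection $e \mapsto U_e$ provided by Proposition \ref{ballbij}.

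For part (a), I would fix a vertex $v \in \TT_0$ and use the disjoint decomposition $\PP^1(\QQ_p) = \bigsqcup_{s(e)=v} U_e$ recorded just after Proposition \ref{ballbij}. This is a finite partition (into $p+1$ pieces), so by linearity of the measure on $\mathcal{P}_k^{\mathrm{loc}}$ one has
\[
\int_{\PP^1(\QQ_p)} P(x)\,d\mu_c(x) = \sum_{s(e)=v} \int_{U_e} P(x)\,d\mu_c(x) = \sum_{s(e)=v} c(e)(P) = \Bigl(\sum_{s(e)=v} c(e)\Bigr)(P),
\]
which vanishes because $c$ is a harmonic cocycle.

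For part (b), the strategy is to treat first the basic case $f = P \in \mathcal{P}_k$ and $U = U_e$, then extend by linearity. Since $\gamma U_e = U_{\gamma e}$ by the $\GL$-equivariance of Proposition \ref{ballbij}, the definition of $\mu_c$ yields
\[
\int_{\gamma U_e} P(x)\,d\mu_c(x) = \int_{U_{\gamma e}} P(x)\,d\mu_c(x) = c(\gamma e)(P).
\]
The $\Gamma$-invariance of $c$ reads $c(\gamma e) = \gamma \cdot c(e)$, and the left action on $V_k = \operatorname{Hom}_{\QQ_p}(\mathcal{P}_k,\QQ_p)$ induced by the right $\cdot_k$-action on $\mathcal{P}_k$ is by definition $(\gamma \cdot v)(P) = v(P \cdot_k \gamma)$. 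Hence $c(\gamma e)(P) = c(e)(P \cdot_k \gamma) = \int_{U_e} (P \cdot_k \gamma)(x)\,d\mu_c(x)$, which is the desired identity in this case.

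To extend to arbitrary $U \in \mathcal{K}$ and $f \in \mathcal{P}_k^{\mathrm{loc}}$, I would use that every compact open subset of $\PP^1(\QQ_p)$ is a finite disjoint union of basic balls $U_{e_i}$, and that after refining such a partition $f$ restricts on each piece to the restriction of some $P_i \in \mathcal{P}_k$. Writing $f\chi_U = \sum_i P_i \chi_{U_{e_i}}$ and correspondingly $f\chi_{\gamma U} = \sum_i P_i \chi_{\gamma U_{e_i}}$, the identity for each summand proved above and linearity of $\mu_c$ in its argument give the statement. The main thing to keep track of is the compatibility of the conventions for the right action on $\mathcal{P}_k$ and its dualisation to $V_k$, and a clean reduction to the basic case via the finiteness of partitions; no analytic subtlety arises since we are only integrating locally polynomial functions in the sense of $\mathcal{P}_k^{\mathrm{loc}}$.
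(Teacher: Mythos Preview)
The paper states this lemma without proof, so there is no argument to compare against; your approach is the natural one and is what any reader would supply. Part (a) is correct as written.

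For part (b), the basic case and the overall strategy are right, but there is a slip in the extension step. You write $f\chi_U = \sum_i P_i \chi_{U_{e_i}}$ and then claim ``correspondingly $f\chi_{\gamma U} = \sum_i P_i \chi_{\gamma U_{e_i}}$''. This is not correct: the polynomial representing $f$ on $U_{e_i}$ has no reason to also represent $f$ on the translated ball $\gamma U_{e_i}$. The clean way is to choose the partition so that $f$ is a polynomial $Q_i$ on each $\gamma U_{e_i}$ (equivalently, so that $f\cdot_k\gamma$ is a polynomial on each $U_{e_i}$, since $(f\cdot_k\gamma)|_{U_{e_i}} = Q_i\cdot_k\gamma$). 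Then your basic case gives
\[
\int_{\gamma U} f\,d\mu_c \;=\; \sum_i \int_{\gamma U_{e_i}} Q_i\,d\mu_c \;=\; \sum_i \int_{U_{e_i}} (Q_i\cdot_k\gamma)\,d\mu_c \;=\; \int_U (f\cdot_k\gamma)\,d\mu_c.
\]
With this correction the argument goes through.
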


By a theorem of Amice-Velu and Vishik, we can integrate functions in $A_k^\mathrm{loc}$ agaist $\mu_c$.

\begin{theo}[Amice-Velu, Vishik, {\cite[Proposition 9]{tei90}}]
\label{amice}
Let $c\in C_h(\Gamma,V_k)$. There is a unique extension of $\mu_c$ to a measure on $A_k^\mathrm{loc}$ characterized by the following properties:
\begin{itemize}
%\item[(a)] $\int_U f(x)\textnormal{d}\mu_c(x)$ is finitely additive in $U$ and linear in $f$ for all $f\in A_k^\mathrm{loc}$ and $U\in\mathcal{K}$.
\item[(a)] $\int_{U_e}P(x)\text{d}\mu_c(x)=c(e)(P)$ for all $P\in\mathcal{P}_k$, $e\in\mathcal{T}_1$.
\item[(b)] There exists a constant $C\geq 0$ such that for all $e\in\mathcal{T}_1$ with $\infty\in U_e$, $0\notin U_e$ and $n\leq k$ we have
\[
\left|\int_{U_e} x^n\textnormal{d}\mu_c(x)\right|_p\leq C\rho^{-n+k/2},
\]
and for $a\in U_e\subset\Qp$ and $n\geq 0$ we have
\[
\left|\int_{U_e} (x-a)^n\textnormal{d}\mu_c(x)\right|_p\leq C\rho^{n-k/2}.
\]
Here, $\rho=\sup_{z\in U_e}|1/u|_p$ if $\infty\in U_e$ and $\rho=\sup_{z,z'\in U_e}|z-z'|_p$ if $\infty\notin U_e$.

\end{itemize}

\end{theo}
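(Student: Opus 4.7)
The plan is to define $\mu_c$ on the locally polynomial subspace $\mathcal{P}_k^{\mathrm{loc}}$, verify a growth estimate on moments of degree $\leq k$, and then invoke the classical Amice--V\'elu--Vishik criterion to obtain the unique continuous extension to $A_k^{\mathrm{loc}}$ together with the full bound~(b). For the first step, $\mathcal{P}_k^{\mathrm{loc}}$ is spanned by functions $\chi_{U_e}\cdot P$ with $e\in\TT_1$ and $P\in\mathcal{P}_k$, so declaring $\mu_c(\chi_{U_e}P)=c(e)(P)$ extends by linearity. Well-definedness amounts to compatibility under refinement: if $U_e=\bigsqcup_{e'}U_{e'}$, with $e'\neq\bar e$ running over the edges out of $t(e)$, the harmonicity relation together with $c(\bar e)=-c(e)$ forces $\sum_{e'}c(e')(P)=c(e)(P)$, and property~(a) then drops out exactly as in Lemma \ref{zero}(a).

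The heart of the proof is establishing~(b) on polynomials of degree $\leq k$, uniformly in the edge. Here I would exploit the finiteness of $\Gamma\backslash\TT_1$ coming from Proposition \ref{actprop}: fix representatives $e_1,\ldots,e_r$ so that every edge has the form $e=\gamma e_i$ for some $\gamma\in\Gamma$. The $\Gamma$-equivariance in Lemma \ref{zero}(b) reduces
\[
\int_{U_e}(x-a)^n\,d\mu_c(x)=\int_{U_{e_i}}\bigl((x-a)^n\cdot_k\gamma\bigr)(x)\,d\mu_c(x).
\]
A short direct computation shows that, writing $\gamma=\left(\begin{smallmatrix}a'&b'\\c'&d'\end{smallmatrix}\right)$ and $\alpha=\gamma^{-1}a\in U_{e_i}$,
\[
(x-a)^n\cdot_k\gamma=\frac{\det(\gamma)^{n-k/2}}{(c'\alpha+d')^n}\,(c'x+d')^{k-n}(x-\alpha)^n.
\]
For $n\leq k$ the right-hand side is a scalar times a polynomial of degree $\leq k$, so the integral is bounded by the finitely many moments $c(e_i)(x^j)$ with $j\leq k$, while the scalar prefactor carries exactly the $\rho$-dependence because the radius of $U_e$ is controlled by $|c'\alpha+d'|_p^{-2}|\det\gamma|_p$. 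An analogous calculation handles the moments near $\infty$, and taking a maximum over the finitely many $e_i$ yields a uniform constant $C$.

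With these bounds established for polynomials in $\mathcal{P}_k$, the Amice--V\'elu--Vishik theorem supplies a unique continuous extension of $\mu_c$ to a functional on $A_k^{\mathrm{loc}}$, of which $\mathcal{P}_k^{\mathrm{loc}}$ is a dense subspace in the Fr\'echet topology, and simultaneously propagates the growth estimates to the moments of $(x-a)^n$ for all $n\geq 0$, finishing~(b). The main obstacle is the bookkeeping in the second step: one must track how the weight-$k$ action of $\Gamma$ redistributes the factor $\det(\gamma)^{-k/2}(c'x+d')^k$ in order to extract exactly the power of $\rho$ prescribed in the statement, which is where the split between moments at a finite centre $a$ and those near $\infty$ enters naturally.
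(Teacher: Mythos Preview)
The paper does not give its own proof of this theorem: it is stated with attribution to Amice--V\'elu and Vishik and cited as \cite[Proposition~9]{tei90}, and the text immediately moves on to use the result. So there is nothing in the paper to compare your argument against; your sketch is in fact supplying what the paper deliberately outsources to the literature.

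That said, your outline is the standard route and is broadly sound. Two small remarks. First, your Step~1 (well-definedness of $\mu_c$ on $\mathcal{P}_k^{\mathrm{loc}}$) is already carried out in the paper just before the theorem is stated, so strictly speaking only the growth bound and the extension are at issue. Second, in the reduction via $\Gamma$-equivariance you should note that $\det\gamma=1$ for $\gamma\in\Gamma$, and that the polynomial $(c'x+d')^{k-n}(x-\alpha)^n$ you land on over $U_{e_i}$ has coefficients whose size depends on $\gamma$, not just on $e_i$; to get a uniform constant you must also use that $|c'x+d'|_p$ is constant on $U_{e_i}$ (equal to $|c'\alpha+d'|_p$, since $\gamma$ carries $U_{e_i}$ to a ball not containing $\infty$) and absorb that factor into the $\rho$-power. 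Once this bookkeeping is done the exponent $n-k/2$ falls out, and the classical Amice--V\'elu--Vishik criterion finishes the job exactly as you say.
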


By \cite[Lemma 10]{tei90}, property (b) of Lemma \ref{zero} extends to all functions $f \in A_k^\mathrm{loc}$. For $c\in C_h(\Gamma,V_k)\otimes_{\Qp}\CC_p$ we define a function $f_c:\mathcal{H}_p\rightarrow \CC_p$ by
\[
f_c(\tau)=\int_{\PP^1(\Qp)}\frac{1}{\tau-x}\text{d}\mu_c(x) \quad \text{for } \tau\in\mathcal{H}_p.
\]
Then $f_c$ is rigid analytic and satifies the following modular transformation property:
\[
f_c(\gamma\tau)=(c\tau+d)^{k+2}f_c(\tau), \quad \text{for }  \tau\in\mathcal{H}_p,\ \gamma=\begin{pmatrix} a&b\\c&d \end{pmatrix}\in\Gamma.
\]
The $\CC_p$-vector space $\mathcal{S}^{\textnormal{rig}}_{k+2}(\Gamma,\CC_p)$ of \emph{rigid analytic modular forms} is the space of all rigid analytic functions on $\mathcal{H}_p$ satisfying this transformation property. Both of these spaces carry actions of the Hecke algebra attached to the group $\Gamma$ by the usual double coset decomposition. A detailed description of this Hecke algebra can be found in \cite{koc}.
\begin{theo}[Drinfeld, Manin, Schneider, {\cite[Theorem 3]{tei90}}]
\label{poiiso}
The map
\begin{align*}
\phi\colon C_h(\Gamma,V_k)\otimes_{\Qp}\CC_p&\rightarrow\mathcal{S}^{\textnormal{rig}}_{k+2}(\Gamma,\CC_p)\\
c&\mapsto f_c
\end{align*}
is a Hecke-equivariant isomorphism for all  even integers $k\geq 0$. The inverse is given by $f\mapsto c_f$, where $c_f(e)(P)=\operatorname{\mathrm{res}}_{e}(P(\tau)f(\tau)\text{d}\tau)$.
\end{theo}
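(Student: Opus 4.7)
The plan is to establish $\phi$ as a well-defined map, construct a candidate inverse $\psi\colon f\mapsto c_f$ via residues, verify both composites are the identity, and finally check Hecke equivariance.

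\textbf{Well-definedness of $\phi$.} For $c\in C_h(\Gamma,V_k)\otimes_{\Qp}\CC_p$, I would use the Amice-Velu-Vishik extension (Theorem \ref{amice}) together with a term-by-term integration argument to show $f_c$ is rigid analytic on $\Hp$: on any affinoid $\mathcal{X}\subset\Hp$ the kernel $1/(\tau-x)$ admits a convergent Laurent expansion in $\tau$ whose coefficients are uniformly bounded against $\mu_c$. The modular transformation property then follows from the extension of Lemma \ref{zero}(b) to $A_k^{\mathrm{loc}}$ via a direct change of variable $x\mapsto \gamma^{-1}x$, because $1/(\gamma\tau-x) = (c\tau+d)^{-1}\cdot(c'x+d')\cdot 1/(\tau - \gamma^{-1}x)$ up to the relevant Jacobian, the weight-$k$ cocycle factors combining to produce weight $k+2$.

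\textbf{Candidate inverse and its basic properties.} For $f\in\mathcal{S}^{\mathrm{rig}}_{k+2}(\Gamma,\CC_p)$ and $e\in\TT_1$ define $c_f(e)(P)=\operatorname{res}_e(P(\tau)f(\tau)\,\mathrm{d}\tau)$, the residue being taken along an oriented loop in the annulus $W_e=\operatorname{red}^{-1}(e)$. Alternation $c_f(\bar e)=-c_f(e)$ is the reversal of orientation. The harmonicity $\sum_{s(e)=v}c_f(e)(P)=0$ is the residue theorem applied to the affinoid $\operatorname{red}^{-1}(v)$, whose boundary is the disjoint union of the annuli $W_e$ at $v$ and on which $P(\tau)f(\tau)\,\mathrm{d}\tau$ is a rigid analytic $1$-form with no interior poles. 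The $\Gamma$-equivariance of $c_f$ follows from the modular transformation property of $f$ together with the $\GL$-equivariance of $\operatorname{red}$ (Proposition \ref{redmap}), after a change of variable in the residue.

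\textbf{The composites are the identity.} The first direction is an explicit computation:
\begin{align*}
c_{f_c}(e)(P) &= \operatorname{res}_e\Bigl(P(\tau)\int_{\PP^1(\Qp)}\frac{\mathrm{d}\mu_c(x)}{\tau-x}\,\mathrm{d}\tau\Bigr) \\
 &= \int_{\PP^1(\Qp)}\operatorname{res}_e\Bigl(\frac{P(\tau)}{\tau-x}\,\mathrm{d}\tau\Bigr)\mathrm{d}\mu_c(x),
\end{align*}
the exchange being justified by uniform convergence of the Laurent expansion on the annulus $W_e$. For $x\in U_e$ the pole $\tau=x$ lies inside the loop, giving residue $P(x)$; for $x\in U_{\bar e}$ the integrand is analytic inside and the residue vanishes. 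Hence this reduces to $\int_{U_e}P(x)\,\mathrm{d}\mu_c(x)=c(e)(P)$. The opposite direction $f_{c_f}=f$ is the deeper one: I would invoke the Schneider-Teitelbaum (Drinfeld-Manin) recovery theorem, which expands $1/(\tau-x)$ adapted to each annulus $W_e$, integrates term-by-term against $\mu_{c_f}$, and then assembles the contributions over all edges using harmonicity of $c_f$ to cancel lower-order terms; the Amice-Velu-Vishik growth estimates (Theorem \ref{amice}(b)) provide the needed convergence, and the result equals $f$ because $f$ is itself reconstructed on each affinoid from its local Laurent data.

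\textbf{Hecke equivariance and the main obstacle.} For a Hecke operator defined by $\Gamma g\Gamma=\bigsqcup_i\Gamma g_i$, both sides transform by $\sum_i(\cdot)|g_i$, and the compatibility of the integral and residue formulas with the $g_i$-actions (by linearity and change of variable on $\PP^1(\QQ_p)$ respectively $\Hp$) yields equivariance immediately. The main obstacle is the surjectivity direction $f_{c_f}=f$: verifying that the boundary residue distribution of a rigid analytic modular form determines it via the Poisson kernel is non-trivial, relying crucially on the moderate growth built into the Amice-Velu-Vishik characterization together with the harmonicity of $c_f$ to control the Laurent tails across different annuli.
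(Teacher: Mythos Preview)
The paper does not give its own proof of this theorem: it is stated as a result due to Drinfeld, Manin, and Schneider, with a reference to \cite[Theorem 3]{tei90}, and no argument is supplied. There is therefore no proof in the paper to compare your proposal against.

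That said, your outline follows the standard route one finds in Schneider's and Teitelbaum's original work: the Poisson kernel integral gives a rigid analytic function with the correct automorphy factor, residues on annuli give the inverse, harmonicity comes from the residue theorem on the affinoids $\operatorname{red}^{-1}(v)$, and the identity $c_{f_c}=c$ is a direct residue computation. You correctly identify the surjectivity direction $f_{c_f}=f$ as the substantive step and correctly locate the difficulty in the growth control furnished by the Amice--Velu--Vishik characterization. One point to sharpen: in the exchange of residue and integral for $c_{f_c}$, the claim that ``for $x\in U_e$ the pole $\tau=x$ lies inside the loop'' is not quite right as stated, since $x\in\PP^1(\Qp)$ never lies in $\Hp$; what one actually uses is a geometric/Laurent expansion of $1/(\tau-x)$ on the annulus $W_e$ according to whether $x\in U_e$ or $x\in U_{\bar e}$, and then term-by-term integration of the resulting series. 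With that adjustment your sketch is a faithful summary of the classical argument.
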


\subsection{The $\mathcal{L}$-operator}
\label{mapsi}
For $c\in C_h(\Gamma,V_k)$ and $v\in\mathcal{T}_0$ we define a map $\psi^v(c)\colon\Gamma\rightarrow V_k$ by
\[
\psi^v(c)(\gamma)=\sum_{e\colon v\rightarrow\gamma v} c(e)\in V_k \quad \text{for } \gamma\in\Gamma.
\]

\begin{theo}[de Shalit, Schneider]
\label{psiso}
The map $\psi^v\colon C_h(\Gamma,V_k)\rightarrow Z^1(\Gamma, V_k)$ is a well-defined homomorphism of $\Qp$-vector spaces and the induced map
\[
\psi\colon C_h(\Gamma,V_k)\rightarrow H^1(\Gamma, V_k)
\]
is a Hecke-equivariant isomorphism independent of the choice of $v\in\TT_0$.
\end{theo}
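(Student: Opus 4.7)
My plan is to handle the formal properties of $\psi^v$ first, identify the induced map on cohomology with a connecting homomorphism, and finally address the crux: that harmonic cocycles single out a canonical system of representatives for $H^1(\Gamma,V_k)$.

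\emph{Formal properties and independence of $v$.} Since $\TT$ is a tree, the reduced path $v\to\gamma v$ is unique and the sum defining $\psi^v(c)(\gamma)$ is finite; $\Qp$-linearity in $c$ is immediate. For the cocycle identity I would decompose $v\to\gamma_1\gamma_2 v$ into $v\to\gamma_1 v$ followed by $\gamma_1 v\to\gamma_1\gamma_2 v$; the second leg is the $\gamma_1$-translate of $v\to\gamma_2 v$, and the $\Gamma$-invariance $c(\gamma e)=\gamma c(e)$ converts its contribution into $\gamma_1\cdot\psi^v(c)(\gamma_2)$. For a second base vertex $v'$ set $\eta=\sum_{e:v\to v'}c(e)$; using that sums of $c$ along any (not necessarily reduced) path in $\TT$ depend only on the endpoints (backtracks cancel via $c(\bar e)=-c(e)$), I re-route $v\to\gamma v$ through $v'$ and $\gamma v'$ to obtain $\psi^v(c)(\gamma)-\psi^{v'}(c)(\gamma)=\eta-\gamma\eta$, which is a coboundary in $B^1(\Gamma,V_k)$.

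\emph{Identification with a connecting map.} Contractibility of $\TT$ yields the short exact sequence of $\Gamma$-modules
\[
0\to V_k\to \operatorname{Map}(\TT_0,V_k)\xrightarrow{d} C(\TT,V_k)\to 0,\qquad (d\phi)(e)=\phi(t(e))-\phi(s(e)).
\]
By Proposition \ref{actprop}(b) the vertex and edge stabilisers in $\Gamma$ are finite, and since $V_k$ is a $\Qp$-vector space, averaging together with Shapiro's lemma give $H^i(\Gamma,\operatorname{Map}(\TT_0,V_k))=H^i(\Gamma,C(\TT,V_k))=0$ for $i\geq 1$. The long exact sequence thus collapses to an isomorphism $\delta\colon C(\Gamma,V_k)/d(\operatorname{Map}(\TT_0,V_k)^\Gamma)\xrightarrow{\sim}H^1(\Gamma,V_k)$. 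Lifting $c$ to $\tilde c(w)=\sum_{e:v\to w}c(e)$, the function $\gamma\tilde c-\tilde c$ is $d$-closed hence constant, and evaluating at $v$ shows it equals $-\psi^v(c)(\gamma)$; so $\delta$ agrees with $-\psi$ under this identification.

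\emph{Main obstacle: harmonic representatives.} It remains to show $C_h(\Gamma,V_k)$ is a complement to $d(\operatorname{Map}(\TT_0,V_k)^\Gamma)$ inside $C(\Gamma,V_k)$. Writing $\Delta c(v)=\sum_{s(e)=v}c(e)$, one checks $\Delta\circ d=-L$, where $L=(p+1)\operatorname{id}-A$ is the tree Laplacian with $A$ the adjacency operator; both act on the finite-dimensional space $\operatorname{Map}(\TT_0,V_k)^\Gamma=\prod_{\bar v\in\Gamma\backslash\TT_0}V_k^{\Gamma_v}$. Injectivity of $\psi|_{C_h(\Gamma,V_k)}$ reduces to $\ker L\subseteq\ker d$: any $\Gamma$-equivariant harmonic $\tilde c$ is forced to be constant, and for $k>0$ the Zariski density of $\Gamma$ in $\mathrm{SL}_2(\Qp)$ gives $V_k^\Gamma=0$ so that constant is zero, while for $k=0$ the maximum principle on the finite connected graph $\Gamma\backslash\TT$ gives constancy directly; in either case $d\tilde c=0$. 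Surjectivity amounts to solving $L\tilde c=-\Delta c$, which, using self-adjointness of $L$ under the pairing weighted by $|\Gamma_v|^{-1}$, reduces to $\Delta c$ being orthogonal to $\ker L^\ast=V_k^\Gamma$; this follows from $c(\bar e)=-c(e)$ by a direct summation argument. Finally, for Hecke equivariance both sides carry the Hecke action via the same double-coset decomposition $\Gamma g\Gamma=\bigsqcup_i\Gamma g_i$, and since the construction of $\psi$ is natural in the $\Gamma$-module $V_k$ and in the $\Gamma$-action on $\TT$ it intertwines these actions. I expect the Laplacian analysis to be the main obstacle, as it is the only point where the special structure of $V_k$ and $\Gamma$ genuinely enters.
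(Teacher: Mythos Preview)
The paper does not give a self-contained argument here: it asserts that Hecke-equivariance is straightforward and defers everything else to \cite[Section~6]{des}. Your outline therefore goes well beyond what the paper provides, and the overall architecture---verify the cocycle identity and base-point independence, identify $\psi$ with the connecting map of $0\to V_k\to\operatorname{Map}(\TT_0,V_k)\xrightarrow{d}C(\TT,V_k)\to 0$ (using finiteness of stabilisers and Shapiro to kill $H^1$ of the middle term), then show that harmonic cocycles form a complement to $\operatorname{im}(d)$---is the standard route and is correct in outline.

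The gap is in the complement step. You correctly reduce both injectivity and surjectivity to the single assertion $\ker L=\ker d$ on $\operatorname{Map}(\TT_0,V_k)^\Gamma$ (indeed your surjectivity argument tacitly assumes $\ker L^\ast=V_k^\Gamma$, which is the same statement again). But your justification of $\ker L\subseteq\ker d$ does not survive over $\Qp$. For $k=0$ you invoke a maximum principle on $\Gamma\backslash\TT$; there is no ordering on $\Qp$, so no maximum principle---what does work is that the induced matrix of $L$ has rational entries, and its rank, computed over $\RR$ via Perron--Frobenius on the connected quotient, transfers to $\Qp$. For $k>0$ you assert that a $\Gamma$-equivariant harmonic $\tilde c$ is ``forced to be constant'' without argument, and the scalar reasoning no longer applies: the vertex stabilisers act nontrivially on $V_k$, so $L$ does not factor as $(\text{scalar Laplacian})\otimes\operatorname{id}_{V_k}$, and self-adjointness alone is not enough over a non-ordered field (an orthogonal complement need not be a direct-sum complement). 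One clean repair is to pass to a torsion-free normal subgroup $\Gamma'\lhd\Gamma$ of finite index: there all stabilisers are trivial, $L$ does tensor-factor, the Perron--Frobenius/rank argument gives $\ker L=\ker d$, and the statement for $\Gamma$ follows by taking $\Gamma/\Gamma'$-invariants, using Hochschild--Serre together with the vanishing of higher cohomology of the finite group $\Gamma/\Gamma'$ on $\Qp$-vector spaces.
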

\begin{proof}
The Hecke-equivariance is straightfoward. The remaining statements are proved in \cite[Section 6]{des}.
\end{proof}
\begin{rem}
The action of the Hecke algebra on $H^{1}(\Gamma,V_k)$ is the usual (see \cite[Section 1.1]{as}). The action on $C_h(\Gamma, V_k)$ can be interpreted analogously by the identification $C_h(\Gamma, V_k)=H^0(\Gamma,C_h(\mathcal{T},V_k))$.
\end{rem}

The second map we need in order to define the $\mathcal{L}$-operator builds on the integration theory developed in Subsection \ref{poi}. Let $K_p$ denote the quadratic unramified extension of $\Qp$ and $\mathcal{O}_p$ its ring of integers. We set
\[
\Hp(K_p)=\PP^1(K_p)\setminus\PP^1(\QQ_p)\subset\Hp.
\]

\begin{defi}
Let $c\in C_h(\Gamma,V_k)$ be a harmonic cocycle and denote the associated measure by $\mu_c$. Fix the branch of the $p$-adic logarithm $\log_p\colon K_p^\times\rightarrow \mathcal{O}_p$ such that $\log_p(p)=0$. For $P\in \mathcal{P}_k$ and $\tau_1,\tau_2\in\Hp(K_p)$ we define
\[
\int_{\tau_1}^{\tau_2}\omega_c(P)=\int_{\PP^1(\Qp)}P(x)\log_p\left(\frac{x-\tau_2}{x-\tau_1}\right)\text{d}\mu_c(x)\in K_p.
\]
Note that the integrand is locally analytic with the right pole order at infinity and, therefore, can be integrated against $\mu_c$.
\end{defi}

\begin{rem}
In \cite[Theorem 4]{tei90} it is shown that the above integral in fact coincides with the branch of the $p$-adic Coleman integral of the function $P(\tau)f_{c}(\tau)$ corresponding to our choice of logarithm, which justifies the notation of a line integral.
\end{rem}

For simplicity of notation, we set $\operatorname{\mathrm{Tr}}=\tfrac{1}{2}\operatorname{\mathrm{Tr}}_{K_p/\Qp}\colon K_p\rightarrow\Qp$.
For $c\in C_h(\Gamma,V_k)$ and $\tau\in\Hp(K_p)$ we define a map  $\lambda^\tau(c)\colon\Gamma\rightarrow V_k$ by
\[
\lambda^\tau(c)(\gamma)(P)=\operatorname{\mathrm{Tr}}\left(\int_{\tau}^{\gamma\tau}\omega_c(P)\right)\in\Qp, 
\quad \text{for } \gamma\in\Gamma,\ P\in\mathcal{P}_k.
\]
\begin{prop}[{\cite[Lemma 7]{tei90}}]
The map $\lambda^\tau\colon C_h(\Gamma,V_k)\rightarrow Z^1(\Gamma, V_k)$ is a well-defined homomorphism of $\Qp$-vector spaces. The induced map
\[
\lambda\colon C_h(\Gamma,V_k)\rightarrow H^1(\Gamma, V_k)
\]
is a Hecke-equivariant homomorphism independent of the choice of $\tau\in\Hp(K_p)$.
\end{prop}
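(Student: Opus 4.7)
The plan is to verify, in order: (i) $\lambda^\tau(c)$ satisfies the cocycle identity and genuinely takes values in $V_k$ (i.e.\ $\Qp$-valued, which is exactly why the trace appears); (ii) the induced class in $H^1(\Gamma,V_k)$ is independent of the choice of $\tau$; and (iii) the map $\lambda$ intertwines the Hecke action. Throughout, the two main tools are the additivity of the path integral $\int_{\tau_1}^{\tau_2}\omega_c(P)$, which holds by inspection of the defining integrand, and a translation identity for $\omega_c$ under $\Gamma$, which combines the $\Gamma$-equivariance of $\mu_c$ from Lemma~\ref{zero}(b) with the vanishing of total integrals of $\mathcal{P}_k$-polynomials from Lemma~\ref{zero}(a).

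For the cocycle identity $\lambda^\tau(c)(\gamma_1\gamma_2) = \lambda^\tau(c)(\gamma_1) + \gamma_1\cdot\lambda^\tau(c)(\gamma_2)$, I would split the integral as $\int_\tau^{\gamma_1\gamma_2\tau}=\int_\tau^{\gamma_1\tau}+\int_{\gamma_1\tau}^{\gamma_1\gamma_2\tau}$. The first summand yields $\lambda^\tau(c)(\gamma_1)(P)$ after applying $\operatorname{\mathrm{Tr}}$. For the second, writing $\gamma_1=\left(\begin{smallmatrix}a_1&b_1\\c_1&d_1\end{smallmatrix}\right)$ and using $\gamma_1 x - \gamma_1\sigma = \det(\gamma_1)(x-\sigma)/[(c_1 x+d_1)(c_1\sigma+d_1)]$ gives the key identity
\[
\log_p\!\left(\frac{\gamma_1 x - \gamma_1\gamma_2\tau}{\gamma_1 x - \gamma_1\tau}\right) = \log_p\!\left(\frac{x-\gamma_2\tau}{x-\tau}\right) + \log_p\!\left(\frac{c_1\tau+d_1}{c_1\gamma_2\tau+d_1}\right).
\]
Applying Lemma~\ref{zero}(b) to the change of variables $x\mapsto\gamma_1 x$ on $\PP^1(\Qp)$ turns the prefactor $P(x)$ into $(P\cdot_k\gamma_1)(x)$, and the first $\log_p$ term becomes the integrand of $\int_\tau^{\gamma_2\tau}\omega_c(P\cdot_k\gamma_1)$. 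The second, $x$-independent, summand contributes $\log_p(\cdots)\cdot\int_{\PP^1(\Qp)}(P\cdot_k\gamma_1)(x)\,\text{d}\mu_c(x)$, which vanishes by Lemma~\ref{zero}(a). Taking traces identifies the result with $\lambda^\tau(c)(\gamma_2)(P\cdot_k\gamma_1)=(\gamma_1\cdot\lambda^\tau(c)(\gamma_2))(P)$, where I use the definition of the dual $\Gamma$-action on $V_k$.

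For the independence of $\tau$, given $\tau_1,\tau_2\in\Hp(K_p)$ I would set $\eta\in V_k$ by $\eta(P)=\operatorname{\mathrm{Tr}}\!\left(\int_{\tau_2}^{\tau_1}\omega_c(P)\right)$. Path additivity gives $\int_{\tau_1}^{\gamma\tau_1} - \int_{\tau_2}^{\gamma\tau_2} = \int_{\gamma\tau_2}^{\gamma\tau_1} - \int_{\tau_2}^{\tau_1}$, and the same translation computation as above (now with $\tau_2,\tau_1$ playing the role of $\tau,\gamma_2\tau$) yields $\int_{\gamma\tau_2}^{\gamma\tau_1}\omega_c(P) = \int_{\tau_2}^{\tau_1}\omega_c(P\cdot_k\gamma)$. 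Hence $(\lambda^{\tau_1}(c)-\lambda^{\tau_2}(c))(\gamma)(P) = \eta(P\cdot_k\gamma) - \eta(P)$, which is precisely the coboundary $(\gamma\cdot\eta-\eta)(P)$, so $\lambda$ on $H^1$ is well defined.

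Hecke equivariance then follows formally: using a double coset decomposition $\Gamma g\Gamma=\bigsqcup_i\Gamma g_i$ and applying the same translation identity with $g_i$ in place of $\gamma_1$ shows that $\lambda^\tau$ commutes with each Hecke operator at the cocycle level, up to a coboundary of the type already handled in (ii) that compensates for the change of base point $\tau\mapsto g_i\tau$. The main technical obstacle is step (i): one must keep careful track of the right-versus-left $\GL$-actions on $\mathcal{P}_k$ and $V_k$, and ensure that the $x$-independent correction term produced by the translation identity is genuinely annihilated by Lemma~\ref{zero}(a). Once this calculation is in place, steps (ii) and (iii) are structurally identical variations on the same computation.
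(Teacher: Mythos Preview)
The paper does not supply its own proof of this proposition: it simply attributes the result to \cite[Lemma~7]{tei90}, and the only supplementary argument in the paper is Remark~\ref{LCp}, which explains why the traced cocycle $\lambda^\tau(c)$ represents the same class over $K_p$ as Teitelbaum's untraced one (namely, it is the average of the cocycles attached to $\tau$ and $\tau^\sigma$, which differ by a $K_p$-coboundary). Your proposal, by contrast, gives a direct self-contained verification of the cocycle identity and basepoint independence via the translation formula for $\log_p\bigl(\tfrac{\gamma x-\gamma\tau_2}{\gamma x-\gamma\tau_1}\bigr)$ combined with Lemma~\ref{zero}(a),(b); this is precisely the computation underlying Teitelbaum's lemma, so your approach is the standard one and is correct.

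One small point deserves care in your step~(iii). You write ``applying the same translation identity with $g_i$ in place of $\gamma_1$'', but Lemma~\ref{zero}(b) as stated only gives the equivariance $\int_{\gamma U}f\,\mathrm{d}\mu_c=\int_U f\cdot_k\gamma\,\mathrm{d}\mu_c$ for $\gamma\in\Gamma$, not for the coset representatives $g_i$ of a Hecke double coset, which typically lie outside $\Gamma$. The Hecke equivariance instead goes through the identity $\mu_{c|T_g}=\sum_i g_i^{-1}\cdot\mu_c$ on measures (coming from the definition of $T_g$ on $C_h(\Gamma,V_k)$), after which the argument is indeed formal and parallel to your step~(ii). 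This is not a genuine gap, but you should phrase the reduction in terms of the Hecke action on $\mu_c$ rather than invoking the $\Gamma$-equivariance of a fixed $\mu_c$ for elements outside $\Gamma$.
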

Now we can define the $\mathcal{L}$-operator.

\begin{defi}
The Hecke-equivariant homomorphism
\[
\mathcal{L}=\lambda\circ\psi^{-1}\colon H^1(\Gamma,V_k)\rightarrow H^1(\Gamma, V_k)
\]
is called the \emph{$\mathcal{L}$-operator} of weight $k+2$ for the group $\Gamma$ defined over $\Qp$.
\end{defi}

\begin{rem}
\label{LCp}
Note that  the trace $\operatorname{\mathrm{Tr}}$ appearing in our definition is a new feature not present in the original definition due to Teitelbaum in \cite{tei90}. However, by extending scalars to $\CC_p$ we obtain the $\mathcal{L}$-operator defined in \cite{tei90}, since we do not change the cohomology classes over $\CC_p$ (or $K_p$): if we denote the nontrivial element of $\mathrm{Gal}(K_p/\QQ_p)$ by $\sigma$, the cocycle $\lambda^\tau(c)$ is the average of two cocycles corresponding to $\tau$ and $\tau^\sigma$ that differ only by a coboundary defined over $K_p$. We modified the definition for computational reasons. We want to do most of our computations over $\Qp$ and avoid working over $K_p$ whenever possible. Thereby, we have shown that Teitelbaum's $\mathcal{L}$-operator is in fact defined over $\QQ_p$.
\end{rem}

Let $N=N^{+}\cdot N^{-}$. The Jacquet-Langlands correspondence (see \cite{jac}) together with the $p$-adic uniformization theorem of \v{C}erednik and Drinfeld (see \cite{dri}) imply that there is a Hecke-equivariant isomorphism 
\[
\mathcal{S}_{k+2}(\Gamma_0(pN),\CC_p)^{pN^{-}\textnormal{-new}}\simeq\mathcal{S}_{k+2}^{\mathrm{rig}}(\Gamma,\CC_p).
\]
Therefore, we can define the $\mathcal{L}$-invariant attached to a newform $f\in\mathcal{S}_{k+2}(\Gamma_0(pN),\CC_p)^{\mathrm{new}}$. By Theorem \ref{poiiso}, we get a corresponding harmonic cocycle $c_f\in C_h(\Gamma,V_{k})\otimes_\Qp\CC_p$ and by the multiplicity one principle, we find that there is a unique scalar $\mathcal{L}_p(f)\in\CC_p$ such that
\[
\mathcal{L}(\psi(c_f))=\mathcal{L}_p(f)\cdot \psi(c_f).
\]

\begin{defi}
Let $f\in\mathcal{S}_{k+2}(\Gamma_0(pN),\CC_p)^{\mathrm{new}}$ be a newform. The scalar $\mathcal{L}_p(f)\in\CC_p$ constructed above is called \emph{(Teitelbaum's) $\mathcal{L}$-invariant} of $f$.
\end{defi}
The $\mathcal{L}$-operator and the $\mathcal{L}$-invariant attached to a newform $f\in\mathcal{S}_{k+2}(\Gamma_0(pN),\CC_p)^{\mathrm{new}}$ are independent of the choice of the fixed splitting $\iota\colon B_p\rightarrow\mathrm{M}_2(\Qp)$, \cite[Theorem 2]{tei90}.

\section{$p$-adic automorphic forms}
\label{autf}
The main difficulty in computing the $\mathcal{L}$-operator is the efficient computation of the integrals appearing in the definition of $\lambda$. In this section, we describe an efficient way to compute for $c\in C_h(\Gamma,V_k)$, $i\in\NN_0$ and $g\in\GL$, the \emph{i-th $g$-moment of $\mu_c$} 
\[
 m(\mu_c,g,i)=\int_{g\ZZ_p}x^i\cdot_k g^{-1}\text{d}\mu_c(x)\in\QQ_p.
\]
If these moments are known to sufficient precision, we can break up $\PP^1(\QQ_p)$ into compact open balls such that the integrand has a nice analytic expression on each open ball and compute the integrals efficiently. We show that the moments are encoded as values of certain rigid analytic automorphic forms  in Theorem \ref{speclift} and we describe an efficient way to compute these values up to a prescribed precision in Subsection \ref{liftsec}. This is an \emph{overconvergent method} based on the one developed by Darmon, Pollack and Stevens. In \cite{grec}, Greenberg describes a similar method in the case of modular symbols. We adapt his method to our setting and thereby give proofs for the statements in \cite[Section 6]{fm}.

\subsection{Coefficient modules}
Recall that $\Gamma_0(p\ZZ_p)=\left\{\left(\begin{smallmatrix} a &b\\ c&d\end{smallmatrix}\right)\in\mathrm{GL}_2(\ZZ_p) \ \middle| \ p\mid c\right\}$.
\begin{defi}
A \emph{coefficient module} is a $\Qp$-vector space endowed with a right action of $\Gamma_0(p\ZZ_p)$. An \emph{integral coefficient module} is a $\ZZ_p$-module with a right action of $\Gamma_0(p\ZZ_p)$.
\end{defi}

The vector space $V_k$ becomes a coefficient module in the following way. We regard the right $\GL$-module $\mathcal{P}_k$ as a left $\GL$-module by setting 
\[
(g\cdot_k P)(x)= (P\cdot_kg^{-1})(x), \quad \text{for } g\in\GL, P\in\mathcal{P}_k
\]
and endow $V_k$ with the induced right $\GL$-action. Let $\Sigma_0(p)$ denote the following monoid
\[
\Sigma_0(p)=\left\{\begin{pmatrix} a &b\\ c&d\end{pmatrix}\in \mathrm{M}_{2}(\ZZ_p) \biggm| p\mid c, a\in\ZZ_p^\times, ad-bc\neq 0\right\}.
\]
Note that $\Gamma_0(p\ZZ_p)\subset\Sigma_0(p)$. The other coefficient module we are mainly interested in is given as follows. Let $\Ta$ denote the Tate algebra in one variable over $\Qp$,
\[
\Ta=\left\{\sum_{n=0}^\infty a_n x^n\in\Qp\llbracket x\rrbracket \Biggm| \lim_{n\rightarrow\infty} a_n=0\right\}.
\]
The sup-norm equips $\Ta$ with the structure of a $p$-adic Banach space. For an even integer $k\geq 0$ we define a continuous left action of $\QQ^\times_p\Sigma_0(p)$ on $\Ta$. For $\sigma=\left(\begin{smallmatrix} a &b\\ c&d\end{smallmatrix}\right)\in\Sigma_0(p)$, $u\in\QQ^\times_p$ and $f\in\Ta$ set
\[
(u\sigma\cdot_k f)(x)=\det(\sigma)^{-\frac{k}{2}}(-cx+a)^kf\left(\frac{dx-b}{-cx+a}\right).
\]
Note that this is well-defined since $c$ is divisible by $p$, $a$ is a $p$-adic unit and matrices of the form $\sigma=\left(\begin{smallmatrix} u &0\\ 0&u\end{smallmatrix}\right)\in\Sigma_0(p)$, for $u\in\ZZ_p^\times$, act trivially. Let $\Ta_k$ denote the space $\Ta$ with the above weight-$k$ action.
\begin{defi}
The continuous dual $\DD_k=\text{Hom}_{\text{cont}}(\Ta_k, \QQ_p)$ with the induced right action of $\QQ^\times_p\Sigma_0(p)$ is called the space of \emph{rigid analytic distributions} of weight $k$. Since $\Gamma_0(p\ZZ_p)\subset\Sigma_0(p)$, this is a coefficient module.
\end{defi}

\begin{rem}
The natural inclusion $\mathcal{P}_k\rightarrow\Ta_k$ is left $\QQ^\times_p\Sigma_0(p)$-equivariant. By duality, we obtain a surjection $\pi\colon\DD_k\rightarrow V_k$, which is right $\QQ^\times_p\Sigma_0(p)$-equivariant.
\end{rem}

Let
\begin{align*}
&\DD_k(\ZZ_p)=\{\omega\in\DD_k \mid \omega(x^i)\in\ZZ_p \text{ for all } i\in\NN_0\}
\intertext{and}
&V_k(\ZZ_p)=\{\omega\in V_k \mid \omega(x^i)\in\ZZ_p \text{ for all } i\in\{0,\dots,k\}\}.
\end{align*}
It is straightfoward to check that $\DD_k(\ZZ_p)$ and $V_k(\ZZ_p)$ are $\Gamma_0(p\ZZ_p)$-stable and thus define integral coefficient modules. Clearly, the map $\pi$ respects these integral structures.

\begin{lemm}[{\cite[Lemma 1]{grec}}]
\label{bound}
Let $\omega\in\DD_k$. Then the moments $\omega(x^i)$ are uniformly bounded, i.e. $\DD_k\simeq \DD_k(\ZZ_p)\otimes_{\ZZ_p}\Qp$.
\end{lemm}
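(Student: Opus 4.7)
The plan is to translate the statement into the standard fact that the continuous dual of a $p$-adic Banach space with an orthonormal Schauder basis is canonically the space of bounded sequences. Concretely, $\Ta_k$ has the monomial family $\{x^i\}_{i\geq 0}$ as such a basis for the Gauss (sup-)norm $\|\sum_i a_i x^i\| = \sup_i |a_i|_p$; in particular $\|x^i\| = 1$ for every $i \geq 0$. The weight $k$ is irrelevant here since it only enters the $\QQ_p^\times\Sigma_0(p)$-action on $\Ta_k$ and not the underlying Banach structure of $\Ta$.

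The forward step goes as follows. First I would invoke that continuity of $\omega \in \DD_k$ on $\Ta_k$ is equivalent to boundedness, so there exists $C \geq 0$ with $|\omega(f)|_p \leq C \cdot \|f\|$ for all $f \in \Ta_k$. Evaluating at the unit vectors $f = x^i$ immediately yields $|\omega(x^i)|_p \leq C$ uniformly in $i$, which is the first assertion of the lemma. To upgrade this to the isomorphism, I would then choose $n \in \NN_0$ with $p^{-n}\, C \leq 1$, so that $p^n \omega(x^i) \in \ZZ_p$ for every $i$; thus $p^n \omega \in \DD_k(\ZZ_p)$ by definition and therefore $\omega \in \DD_k(\ZZ_p) \otimes_{\ZZ_p} \QQ_p$.

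For the reverse inclusion, starting from $\omega_0 \in \DD_k(\ZZ_p)$ I would extend by the formula
\[
\omega_0\bigl(\textstyle\sum_{i=0}^{\infty} a_i x^i\bigr) = \sum_{i=0}^{\infty} a_i \cdot \omega_0(x^i),
\]
noting that the sum on the right converges in $\ZZ_p$ since $a_i \to 0$ and the moments $\omega_0(x^i)$ are bounded (all in $\ZZ_p$). This defines a continuous $\QQ_p$-linear functional of operator norm $\leq 1$, confirming $\omega_0 \in \DD_k$ and giving $\DD_k(\ZZ_p) \otimes_{\ZZ_p} \QQ_p \subset \DD_k$; combined with the forward inclusion above, this is the desired isomorphism.

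I do not anticipate any substantial obstacle. The only non-routine input is the orthonormality $\|x^i\|=1$ of the monomial basis, which is classical for the Tate algebra; after that, the lemma is essentially the Banach-space dictionary between continuous functionals and bounded sequences, together with an explicit rescaling by a power of $p$ to clear denominators.
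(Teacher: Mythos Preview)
Your argument is correct and is the standard one: the monomials $\{x^i\}$ form an orthonormal basis of the Tate algebra for the Gauss norm, so a continuous (equivalently, bounded) linear functional has uniformly bounded moments, and rescaling by a suitable power of $p$ lands in $\DD_k(\ZZ_p)$. The paper does not supply its own proof here but simply cites \cite[Lemma~1]{grec}, whose proof is exactly this Banach-space argument; so there is nothing to compare.

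One minor remark: your ``reverse inclusion'' paragraph is superfluous. By definition $\DD_k(\ZZ_p)$ is already a \emph{subset} of $\DD_k$, so there is nothing to extend; the map $\DD_k(\ZZ_p)\otimes_{\ZZ_p}\QQ_p \to \DD_k$ is automatically well-defined and injective (since $\DD_k(\ZZ_p)$ is torsion-free), and your forward step already shows surjectivity. This does not affect the correctness of your proposal.
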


The space $\DD_k(\ZZ_p)$ admits a filtration:
\begin{align*}
F^0\DD_k(\ZZ_p)&=\{\omega\in\DD_k(\ZZ_p) \mid \omega(x^i)=0 \text{ for all } i\in\{0,\dots, k\}\},\\
F^n\DD_k(\ZZ_p)&=\{\omega\in F^0\DD_k(\ZZ_p) \mid \omega(x^{k+i})\in p^{n-i+1}\ZZ_p, \text{ for all } i\in\{1,\dots, n\}\}, \text{ for } n\geq 1.
\end{align*}

\begin{lemm}[{\cite[Lemma 2]{grec}}]
\label{stable}
The sets $F^n\DD_k(\ZZ_p)$ are $\Gamma_0(p\ZZ_p)$-stable and thus define integral coefficient modules.
\end{lemm}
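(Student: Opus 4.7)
The plan is to unwind the defining conditions of $F^n\DD_k(\ZZ_p)$ under the right action of a fixed $\gamma=\begin{pmatrix} a & b \\ c & d\end{pmatrix}\in\Gamma_0(p\ZZ_p)$, exploiting that $a,d,\det(\gamma)\in\ZZ_p^\times$, $b\in\ZZ_p$ and $c\in p\ZZ_p$. By duality $(\omega\cdot\gamma)(x^i)=\omega(\gamma\cdot_k x^i)$, and unfolding the weight-$k$ action gives
\[
\gamma\cdot_k x^i \;=\; \det(\gamma)^{-k/2}(-cx+a)^{k-i}(dx-b)^i.
\]

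For the low moments $0\leq i\leq k$, the right-hand side is a polynomial in $\mathcal{P}_k$ whose coefficients lie in $\ZZ_p$, since $\det(\gamma)^{-k/2}$ is a $p$-adic unit and all matrix entries are in $\ZZ_p$. Hence $(\omega\cdot\gamma)(x^i)$ is a $\ZZ_p$-linear combination of $\omega(1),\ldots,\omega(x^k)$, which vanish for any $\omega\in F^0\DD_k(\ZZ_p)\supseteq F^n\DD_k(\ZZ_p)$. This settles the $F^0$-condition for $\omega\cdot\gamma$ and the integrality of its first $k+1$ moments.

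For the high moments, take $i=k+j$ with $j\geq 1$ and expand the negative power as a convergent Taylor series in $\Ta$,
\[
(-cx+a)^{-j} \;=\; a^{-j}\sum_{m\geq 0}\binom{-j}{m}(-c/a)^m\, x^m,
\]
which converges because $c/a\in p\ZZ_p$ and whose $x^m$-coefficient has $p$-adic valuation at least $m$. Multiplying by $(dx-b)^{k+j}\in\ZZ_p[x]$ and the unit $\det(\gamma)^{-k/2}$ produces a convergent expansion $\gamma\cdot_k x^{k+j}=\sum_{r\geq 0}\alpha_r x^r$, and the key estimate I would verify is the valuation bound
\[
v_p(\alpha_r)\;\geq\; \max(0,\,r-k-j) \qquad \text{for all } r\geq 0.
\]
By continuity of $\omega$ on $\Ta_k$ one then has $(\omega\cdot\gamma)(x^{k+j})=\sum_{r\geq k+1}\alpha_r\,\omega(x^r)$, since $\omega$ kills all monomials of degree $\leq k$. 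Combining the above bound with the filtration estimate $v_p(\omega(x^{k+s}))\geq n-s+1$ for $1\leq s\leq n$ (together with $v_p(\omega(x^r))\geq 0$ for $r>k+n$), a split into the ranges $k+1\leq r<k+j$, $r=k+j$, and $r>k+j$ shows that every term has valuation at least $n-j+1$. Summing gives $(\omega\cdot\gamma)(x^{k+j})\in p^{n-j+1}\ZZ_p$. The same termwise estimate with only the crude bound $v_p(\omega(x^r))\geq 0$ yields $\ZZ_p$-integrality of all moments of $\omega\cdot\gamma$, so $\omega\cdot\gamma\in F^n\DD_k(\ZZ_p)$.

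The main obstacle is entirely bookkeeping: one has to check that the $p$-divisibility gained from the expansion of $(-cx+a)^{-j}$ through $c\in p\ZZ_p$ compensates in every regime for the potential loss of $p$-divisibility of $\omega(x^r)$ as $r$ leaves the filtration range. The exponent $n-i+1$ in the definition of $F^n\DD_k(\ZZ_p)$ is tuned precisely so that the two estimates match termwise, which is what forces the filtration to be $\Gamma_0(p\ZZ_p)$-stable rather than stable only under a smaller congruence subgroup.
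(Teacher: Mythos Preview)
Your argument is correct. The paper does not give its own proof of this lemma but simply cites \cite[Lemma~2]{grec}; your direct verification via expanding $\gamma\cdot_k x^{k+j}=\det(\gamma)^{-k/2}(-cx+a)^{-j}(dx-b)^{k+j}$ as a power series and tracking $p$-adic valuations is exactly the standard approach used there. One minor point of bookkeeping: in your final case analysis the range $r>k+j$ actually needs a further split into $k+j<r\le k+n$ (where the filtration bound $v_p(\omega(x^{k+s}))\ge n-s+1$ applies) and $r>k+n$ (where only $v_p(\omega(x^r))\ge 0$ is available but $v_p(\alpha_r)\ge r-k-j\ge n-j+1$ compensates); you gesture at this but the written three-way split hides it.
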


Thanks to the above lemma, we can introduce another class of coefficient modules.

\begin{defi}
Let $n\geq 0$. The \emph{$n$-th approximation module} to the module $\DD_k(\ZZ_p)$ is the integral coefficient module
\[
A^n\DD_k(\ZZ_p)=\DD_k(\ZZ_p)/F^n\DD_k(\ZZ_p).
\] 
Note that $A^n\DD_k(\ZZ_p)$ is a finitely generated $\ZZ_p$-module and we have $A^0\DD_k(\ZZ_p)\simeq V_k(\ZZ_p)$.
\end{defi}

\begin{prop}[{\cite[Proposition 4.4]{pp}}]
\label{projdist}
The natural projection
\[
\DD_k(\ZZ_p)\rightarrow\varprojlim_{n} A^n\DD_k(\ZZ_p)
\]
is an isomorphism.
\end{prop}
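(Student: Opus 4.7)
The plan is to prove injectivity and surjectivity of the projection separately, using the explicit description of the filtration to reconstruct a distribution from its residues in the quotients.

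For injectivity I would identify the kernel as $\bigcap_{n\geq 0} F^n\DD_k(\ZZ_p)$ and show it is trivial. If $\omega$ lies in this intersection, then membership in $F^0\DD_k(\ZZ_p)$ kills the moments $\omega(x^i)$ for $i=0,\dots,k$, while for each fixed $j\geq 1$ membership in $F^n\DD_k(\ZZ_p)$ for every $n\geq j$ forces $\omega(x^{k+j})\in p^{n-j+1}\ZZ_p$, so letting $n\to\infty$ gives $\omega(x^{k+j})=0$. Thus $\omega$ vanishes on all monomials, and since polynomials are dense in $\Ta_k$ and $\omega$ is continuous, $\omega=0$.

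For surjectivity I would construct $\omega\in\DD_k(\ZZ_p)$ moment by moment from a compatible system $(\overline{\omega}_n)\in\varprojlim_n A^n\DD_k(\ZZ_p)$. The key observation is that for any lift $\omega_n\in\DD_k(\ZZ_p)$ of $\overline{\omega}_n$, the value $\omega_n(x^i)$ is independent of the choice of lift when $i\leq k$ (because $F^n\subset F^0$ kills those moments), and for $i=k+j$ with $1\leq j\leq n$ it is well-defined modulo $p^{n-j+1}\ZZ_p$. For $i\leq k$ this gives a single element $a_i\in\ZZ_p$ not depending on $n$. For fixed $j\geq 1$, as $n$ grows over $n\geq j$, the compatibility of the system together with the inclusions $F^n\subset F^{n-1}$ forces the residues $\omega_n(x^{k+j})\bmod p^{n-j+1}\ZZ_p$ to be a coherent sequence, defining an element $a_{k+j}\in\varprojlim_{n\geq j} \ZZ_p/p^{n-j+1}\ZZ_p\cong \ZZ_p$.

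To assemble the $a_i$ into an actual distribution, I would invoke the description of the continuous dual of the Tate algebra: a continuous $\Qp$-linear functional on $\Ta_k$ corresponds bijectively to a bounded sequence of values on the monomial basis, because for $f=\sum c_i x^i\in\Ta$ the coefficients $c_i$ tend to zero, making $\sum c_i a_i$ converge whenever the $a_i$ are bounded. Since all $a_i$ lie in $\ZZ_p$, the assignment $x^i\mapsto a_i$ extends by linearity and continuity to a well-defined $\omega\in\DD_k(\ZZ_p)$, and by construction its image in each $A^n\DD_k(\ZZ_p)$ is $\overline{\omega}_n$. The main obstacle I expect is the bookkeeping of filtration indices in this step: one has to verify carefully that compatibility in the inverse system translates into $p$-adic Cauchy behaviour with the correct precisions, and that these precisions match up under the transition maps $A^n\to A^{n-1}$. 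Once this bookkeeping is set up, the rest is essentially formal and rests on the identification of $\DD_k(\ZZ_p)$ with $\ZZ_p$-valued moment sequences implicit in Lemma \ref{bound}.
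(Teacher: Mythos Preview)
The paper does not supply its own proof of this proposition; it merely cites \cite[Proposition 4.4]{pp} and moves on. Your argument is correct and is essentially the standard one: identify $\DD_k(\ZZ_p)$ with the set of $\ZZ_p$-valued moment sequences via the density of polynomials in $\Ta_k$ and Lemma~\ref{bound}, then read off injectivity from $\bigcap_n F^n\DD_k(\ZZ_p)=0$ and surjectivity by reassembling a coherent sequence of residues into a bounded moment sequence. The bookkeeping you flag is indeed the only thing to check, and you have it right: for $i\le k$ the value is fixed already in $A^0\DD_k(\ZZ_p)$, and for $i=k+j$ the residues modulo $p^{n-j+1}$ form a Cauchy system in $\ZZ_p$ by the inclusions $F^{n}\subset F^{n-1}$.
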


\subsection{$p$-adic automorphic forms}

%In this section, we introduce various spaces of $p$-adic automorphic forms. We follow \cite[Section 5]{fm}, \cite[Section 4]{grea} and \cite[Section 3]{grec}.

\begin{defi}
Let $V$ be an (integral) coefficient module. A $\Gamma$-invariant \emph{$p$-adic automorphic form} on $\GL$ with values in $V$ is a left $\QQ^\times_p\Gamma$-invariant and right $\Gamma_0(p\ZZ_p)$-equivariant map $\phi\colon \GL\rightarrow V$. The space of $\Gamma$-invariant $V$-valued $p$-adic automorphic forms on $\GL$ is denoted by $\AA(\Gamma,V)$.
\end{defi}

\begin{rem}
\label{cosrep}
Let $V$ be an (integral) coefficient module. An element $\phi\in\AA(\Gamma, V)$ is completely determined by its values on a finite set  $B_\Gamma=(b_j)_{j\in J}\subset\GL$ of representatives for the double coset space
\[
\QQ^\times_p\Gamma\textbackslash\GL/\Gamma_0(p\ZZ_p),
\]
which corresponds bijectively to the edges of $\Gamma\textbackslash\TT$. Assume that the elements in the coefficient module $V$ can be represented (at least up to a finite $p$-adic precision) by a finite amount of data. Then it is possible to store $\phi$ as a vector $(\phi(b_j))_{j\in J}$ of elements in $V$, which allows one to compute in the space $\AA(\Gamma, V)$.
\end{rem}

\begin{defi}~
\begin{itemize}
\item[(a)] The space of \emph{$p$-adic automorphic forms of weight $k$} for the group $\Gamma$ is the $\Qp$-vector space $\AA_k(\Gamma)=\AA(\Gamma, V_k)$.
\item[(b)] The space of \emph{rigid analytic automorphic forms of weight $k$} for the group $\Gamma$ is the $\Qp$-vector space $\mathcal{A}_k(\Gamma)=\AA(\Gamma, \DD_k)$.
\item[(c)] The $\Gamma_0(p\ZZ_p)$-equivariant surjection $\pi\colon\DD_k\rightarrow V_k$ induces a map $\rho\colon\mathcal{A}_k(\Gamma)\rightarrow\AA_k(\Gamma)$, which is called the \emph{specialization map}.
\end{itemize}
\end{defi}

We also have integral structures on these spaces of automorphic forms by setting
\[
\AA_k(\Gamma,\ZZ_p)=\AA(\Gamma,V_k(\ZZ_p)) \quad \text{and}\quad
\mathcal{A}_k(\Gamma,\ZZ_p)=\AA(\Gamma,\DD_k(\ZZ_p)).
\]
The specialization map induces a well-defined map $\rho\colon\mathcal{A}_k(\Gamma,\ZZ_p)\rightarrow\AA_k(\Gamma,\ZZ_p)$.

\begin{prop}
\label{multi}
We have $\AA_k(\Gamma,\ZZ_p)\otimes_{\ZZ_p}\Qp\simeq\AA_k(\Gamma)$ and $\mathcal{A}_k(\Gamma,\ZZ_p)\otimes_{\ZZ_p}\Qp\simeq\mathcal{A}_k(\Gamma)$.
\end{prop}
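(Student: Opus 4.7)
The plan is to exploit the finiteness of $\Gamma\backslash\TT$ (Proposition \ref{actprop}(a)) to reduce both isomorphisms to a statement about finite-group invariants of torsion-free $\ZZ_p$-modules, where tensoring with $\Qp$ is well-behaved.

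First, I would fix a finite set $(b_j)_{j\in J}$ of representatives for the double coset space $\QQ_p^\times\Gamma\backslash\GL/\Gamma_0(p\ZZ_p)$, as in Remark \ref{cosrep}. For any (integral) coefficient module $V$, evaluation at the $b_j$ gives an isomorphism
\[
\AA(\Gamma,V)\;\xrightarrow{\;\sim\;}\;\prod_{j\in J} V^{H_j},
\]
where $H_j=\Gamma_0(p\ZZ_p)\cap b_j^{-1}(\QQ_p^\times\Gamma)b_j$. Indeed, if $\gamma b_j u=b_j$ for some $\gamma\in\QQ_p^\times\Gamma$ and $u\in\Gamma_0(p\ZZ_p)$, then the defining bi-equivariance of $\phi\in\AA(\Gamma,V)$ forces $\phi(b_j)\cdot u=\phi(b_j)$; conversely any $J$-tuple of vectors with $\phi(b_j)\in V^{H_j}$ extends uniquely to a bi-equivariant function on $\GL$ by the recipe $\phi(\gamma b_j u):=\phi(b_j)\cdot u$, the $H_j$-invariance being exactly what makes this independent of the chosen decomposition. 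A direct check of the weight-$k$ action formulas shows that the center $\QQ_p^\times$ acts trivially on both $V_k$ and $\DD_k$, so the $H_j$-action factors through the finite quotient $H_j/(H_j\cap\QQ_p^\times)$, which may be identified with the stabilizer in $\Gamma$ of the edge $b_j e_0$ and is therefore finite by Proposition \ref{actprop}(b).

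Since $-\otimes_{\ZZ_p}\Qp$ commutes with finite products, both displayed isomorphisms of the proposition reduce, $j$ by $j$, to the natural maps
\[
V_k(\ZZ_p)^{H_j}\otimes_{\ZZ_p}\Qp\to V_k^{H_j}\quad\text{and}\quad \DD_k(\ZZ_p)^{H_j}\otimes_{\ZZ_p}\Qp\to \DD_k^{H_j}
\]
being isomorphisms. I would deduce these from the general lemma: for a finite group $H$ acting $\ZZ_p$-linearly on a torsion-free $\ZZ_p$-module $M$, one has $M^H\otimes_{\ZZ_p}\Qp=(M\otimes_{\ZZ_p}\Qp)^H$. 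Injectivity is flatness of $\Qp$ over $\ZZ_p$. For surjectivity, given $v\in(M\otimes\Qp)^H$, choose $n\geq 0$ with $p^n v\in M$; for each $h\in H$, the element $h(p^n v)-p^n v$ lies in the kernel of $M\to M\otimes\Qp$, which is zero by torsion-freeness, so $p^n v\in M^H$.

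To apply the lemma I need torsion-freeness. For $V_k(\ZZ_p)$ this is immediate from its definition as a lattice inside the $\Qp$-vector space $V_k$. For $\DD_k(\ZZ_p)$ it follows from Lemma \ref{bound}, which gives $\DD_k(\ZZ_p)\otimes_{\ZZ_p}\Qp\simeq\DD_k$ and thereby exhibits $\DD_k(\ZZ_p)$ as a $\ZZ_p$-submodule of $\DD_k$. The main step requiring care is the initial identification $\AA(\Gamma,V)\simeq\prod_{j\in J}V^{H_j}$, and in particular the observation that, modulo the trivially acting central $\QQ_p^\times$, each $H_j$ really does act through a finite group; once this is in hand, the rest of the argument is a formal base-change manipulation.
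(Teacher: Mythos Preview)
Your proof is correct and follows essentially the same approach as the paper: both rest on Remark \ref{cosrep} (finite determination by values on coset representatives) together with the finite-dimensionality of $V_k$ and Lemma \ref{bound} for $\DD_k$. The paper's proof is a one-line sketch, while you make the structure explicit by identifying $\AA(\Gamma,V)$ with $\prod_j V^{H_j}$ and invoking the lemma on finite-group invariants of torsion-free $\ZZ_p$-modules; the paper simply observes that finitely many values can be simultaneously scaled into the integral lattice, which amounts to the same thing.
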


\begin{proof}
This is an immediate consequence of the fact that a $p$-adic automorphic form is completely determined by its values on a finite set of coset representatives for
\[
\QQ^\times_p\Gamma\textbackslash\GL/\Gamma_0(p\ZZ_p)
\]
(see Remark \ref{cosrep}) together with the fact that $V_k$ is finite dimensional and  Lemma \ref{bound}.
\end{proof}

Our next aim is to show that the specialization map is surjective and to construct explicit lifts that are easily computable.  For each $b_j$ as in Remark \ref{cosrep} let
\[
S_j=\Gamma_0(p\ZZ_p)\cap b_j^{-1}\Gamma b_j=b_j^{-1}\operatorname{\mathrm{Stab}}_\Gamma(b_je_0)b_j
\]
and note that $S_j$ is finite by Proposition \ref{actprop}. Let $\theta\colon\Ta_k\rightarrow \mathcal{P}_k$ be the natural truncation. It is a left inverse of the natural inclusion $\mathcal{P}_k\rightarrow\Ta_k$. Note that $\theta$ is a surjective homomorphism, but in general not $\Gamma_0(p\ZZ_p)$-equivariant.

\begin{prop}
\label{initiallift}
The specialization map $\rho\colon\mathcal{A}_k(\Gamma)\rightarrow\AA_k(\Gamma)$ is surjective. More specifically, let $\phi\in\AA_k(\Gamma)$. There is a unique lift $\Phi_0\in\mathcal{A}_k(\Gamma)$ of $\phi$ satisfying
\[
\Phi_0(b_j)(f)=\frac{1}{|S_j|}\sum_{\sigma\in S_j}\phi(b_j)(\theta(\sigma\cdot_k f))
\]
for all $f\in\Ta_k$.
\end{prop}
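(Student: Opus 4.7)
My plan is to \emph{define} $\Phi_0$ on the finite set of double-coset representatives $(b_j)_{j\in J}$ by the stated averaging formula, then extend to all of $\GL$ via the decomposition $\GL=\bigsqcup_{j\in J}\QQ^\times_p\Gamma\cdot b_j\cdot\Gamma_0(p\ZZ_p)$ by declaring the extension to be left $\QQ^\times_p\Gamma$-invariant and right $\Gamma_0(p\ZZ_p)$-equivariant. Since any $p$-adic automorphic form is determined by its values on $(b_j)_{j\in J}$ by Remark \ref{cosrep}, uniqueness of the lift among forms satisfying the formula is immediate, and all the work lies in existence. Concretely I have to check that (a) the averaging formula produces a continuous $\QQ_p$-linear functional on $\Ta_k$, i.e.\ an honest element of $\DD_k$; (b) this element is invariant under the right $\cdot_k$-action of $S_j$, which is exactly the obstruction to the extension being well-defined on the double coset; and (c) $\rho(\Phi_0)=\phi$.

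Step (a) is a formality: the map $f\mapsto\phi(b_j)(\theta(\sigma\cdot_k f))$ factors through the continuous truncation $\theta\colon\Ta_k\to\mathcal{P}_k$ composed with the continuous weight-$k$ action of $\Sigma_0(p)$, and $\phi(b_j)\in V_k$ is automatically continuous on the finite-dimensional target $\mathcal{P}_k$. Step (b) is the only real point of the proof. The tension is that $\theta$ is \emph{not} $\Gamma_0(p\ZZ_p)$-equivariant, so the naive lift $\phi(b_j)\circ\theta$ fails $S_j$-invariance; averaging over the finite group $S_j$ is precisely the symmetrization that repairs this defect. Concretely, for $\tau\in S_j$ and $f\in\Ta_k$ one computes $(\Phi_0(b_j)\cdot_k\tau)(f)=\tfrac{1}{|S_j|}\sum_{\sigma\in S_j}\phi(b_j)(\theta((\sigma\tau)\cdot_k f))$ and reindexes $\sigma\mapsto\sigma\tau^{-1}$ to recover $\Phi_0(b_j)(f)$. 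Well-definedness of the extension then follows because if $\alpha b_j\gamma=\alpha'b_j\gamma'$ with $\alpha,\alpha'\in\QQ^\times_p\Gamma$ and $\gamma,\gamma'\in\Gamma_0(p\ZZ_p)$, then $\alpha'^{-1}\alpha=b_j(\gamma'\gamma^{-1})b_j^{-1}\in\QQ^\times_p\Gamma$ forces $\gamma'\gamma^{-1}\in\QQ^\times_p\cdot S_j$, and the central elements of $\Sigma_0(p)$ act trivially in the $\cdot_k$-action by construction, so $S_j$-invariance closes the argument.

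For (c) I restrict $\Phi_0(b_j)$ to $P\in\mathcal{P}_k\subset\Ta_k$ and use that $\Sigma_0(p)$ preserves $\mathcal{P}_k$, so $\theta(\sigma\cdot_k P)=\sigma\cdot_k P$, whence $\pi(\Phi_0(b_j))(P)=\tfrac{1}{|S_j|}\sum_{\sigma\in S_j}(\phi(b_j)\cdot_k\sigma)(P)$. Now for every $\sigma\in S_j\subset b_j^{-1}\Gamma b_j$ we have $b_j\sigma\in\Gamma b_j$, and combining left $\Gamma$-invariance of $\phi$ with its right $\Gamma_0(p\ZZ_p)$-equivariance yields $\phi(b_j)\cdot_k\sigma=\phi(b_j)$ for all such $\sigma$; the averaged sum then collapses to $\phi(b_j)(P)$, giving $\rho(\Phi_0)=\phi$. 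The only substantive ingredient beyond bookkeeping is thus the $S_j$-averaging trick needed to force invariance in step (b); everything else is direct.
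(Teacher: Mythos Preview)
Your proof is correct and follows essentially the same approach as the paper: define $\Phi_0$ on the representatives $b_j$ by the averaging formula, check that the value lies in $\DD_k$ by continuity, verify $S_j$-invariance via the reindexing $\sigma\mapsto\sigma\tau^{-1}$ so that the extension to $\GL$ is well-defined, and finally show $\rho(\Phi_0)=\phi$ by noting that $\theta$ restricts to the identity on $\mathcal{P}_k$ and using the right $\Gamma_0(p\ZZ_p)$-equivariance of $\phi$ together with $S_j\subset b_j^{-1}\Gamma b_j$ to collapse the sum. Your treatment is slightly more explicit about the $\QQ^\times_p$-factor in the well-definedness check, but otherwise the arguments coincide.
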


\begin{proof}
Note that $\Phi_0$ is completely determined by the above values together with the left $\QQ^\times_p\Gamma$-invariance and the right $\Gamma_0(p\ZZ_p)$-equivariance. Thus, it remains to show that $\Phi_0$ is well-defined. Note that $\Phi_0(b_j)\in\DD_k$ since $\Gamma_0(p\ZZ_p)$ acts continuously on $\Ta_k$. Now let $\sigma'\in\Gamma_0(p\ZZ_p)$ with $b_j\sigma' =\gamma b_j$ for some $j$ and $\gamma\in\Gamma$, hence $\sigma'\in S_j$. We need to show that $\Phi_0(b_j\sigma')=\Phi_0(b_j)$. For $f\in\Ta_k$ we compute
\begin{align*}
\Phi_0(b_j\sigma')(f)&=\Phi_0(b_j)(\sigma' \cdot_k f)=\frac{1}{|S_j|}\sum_{\sigma\in S_j}\phi(b_j)(\theta(\sigma\sigma'\cdot_k f))\\
&=\frac{1}{|S_j|}\sum_{\sigma\in S_j}\phi(b_j)(\theta(\sigma\cdot_k f))=\Phi_0(b_j)(f).
\end{align*}
For $i\in\{0,\dots, k\}$ we see, by definition of $S_j$, that
\[
\Phi_0(b_j)(x^i)=\frac{1}{|S_j|}\sum_{\sigma\in S_j}\phi(b_j)(\sigma\cdot_k x^i)=\frac{1}{|S_j|}\sum_{\sigma\in S_j}\phi(b_j\sigma)(x^i)=\phi(b_j)(x^i)
\]
and hence, $\Phi_0$ lifts $\phi$.
\end{proof}
 
We introduce some more spaces of automorphic forms, which are very useful in our computations. For $n\geq 0$ let
\[
\mathcal{A}_{k,n}(\Gamma,\ZZ_p)=\mathcal{A}(\Gamma, F^n\DD_k(\ZZ_p)) \quad \text{and} \quad
\mathcal{A}^n_k(\Gamma,\ZZ_p)=\mathcal{A}(\Gamma, A^n\DD_k(\ZZ_p)).
\]
The natural projection $\DD_k(\ZZ_p)\rightarrow A^n\DD_k(\ZZ_p)$ defines a specialization map
\[
\rho^n\colon\mathcal{A}_k(\Gamma,\ZZ_p)\rightarrow\mathcal{A}^n_k(\Gamma,\ZZ_p).
\]
Furthermore, for all $n\geq m\geq 0$ we obtain maps
\[
\rho^n_m\colon\mathcal{A}_k^{n}(\Gamma,\ZZ_p)\rightarrow\mathcal{A}_k^m(\Gamma,\ZZ_p)
\]
such that $\rho^n_m=\rho^l_m\circ \rho^n_l$  and $\rho^m=\rho^{n}_{m}\circ\rho^n$ for all $m\leq l\leq n$. Note that $\rho^0=\rho$ with the identification $A^0\DD_k(\ZZ_p)\simeq V_k(\ZZ_p)$.

\begin{prop}
\label{projlim}
The natural projection
\[
\mathcal{A}_k(\Gamma,\ZZ_p)\rightarrow\varprojlim_{n}\mathcal{A}_k^n(\Gamma,\ZZ_p)
\]
is an isomorphism. 
\end{prop}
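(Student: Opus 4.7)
The plan is to reduce to the finite set of double coset representatives via Remark \ref{cosrep} and then apply Proposition \ref{projdist} to the coefficient module separately at each representative.

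First I would use Remark \ref{cosrep} to identify each side with a finite product indexed by $J$ of $S_j$-invariants. A form $\phi\in\mathcal{A}_k(\Gamma,\ZZ_p)$ is determined by its tuple of values $(\phi(b_j))_{j\in J}$, and the combined left $\QQ^\times_p\Gamma$-invariance and right $\Gamma_0(p\ZZ_p)$-equivariance force each $\phi(b_j)$ to lie in $\DD_k(\ZZ_p)^{S_j}$ (if $\sigma\in S_j$ then $b_j\sigma=z\gamma b_j$ for some $z\in\QQ^\times_p$, $\gamma\in\Gamma$, so $\phi(b_j)\cdot\sigma=\phi(b_j\sigma)=\phi(b_j)$; this is the same computation as in the proof of Proposition \ref{initiallift}). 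Conversely, any $(S_j)_{j\in J}$-invariant tuple extends uniquely to a well-defined element of $\mathcal{A}_k(\Gamma,\ZZ_p)$. The same reasoning applies to $A^n\DD_k(\ZZ_p)$ in place of $\DD_k(\ZZ_p)$, yielding
\[
\mathcal{A}_k(\Gamma,\ZZ_p)\simeq\prod_{j\in J}\DD_k(\ZZ_p)^{S_j},\qquad \mathcal{A}^n_k(\Gamma,\ZZ_p)\simeq\prod_{j\in J}A^n\DD_k(\ZZ_p)^{S_j},
\]
and these identifications are manifestly compatible with the transition maps $\rho^n_m$.

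Next I would invoke two general facts: finite products commute with inverse limits, and for the finite group $S_j$ the formation of invariants $(-)^{S_j}$ (an equalizer) commutes with inverse limits. Lemma \ref{stable} guarantees that the projections $\DD_k(\ZZ_p)\to A^n\DD_k(\ZZ_p)$ are $\Gamma_0(p\ZZ_p)$-equivariant, so in particular $S_j$-equivariant, and Proposition \ref{projdist} then gives
\[
\prod_{j\in J}\DD_k(\ZZ_p)^{S_j}\simeq\prod_{j\in J}\bigl(\varprojlim_{n} A^n\DD_k(\ZZ_p)\bigr)^{S_j}\simeq\varprojlim_{n}\prod_{j\in J}A^n\DD_k(\ZZ_p)^{S_j},
\]
which under the identifications above is precisely the claim.

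Unpacked, this produces both halves of the assertion concretely. For injectivity, if $\rho^n(\phi)=0$ for all $n$, then each $\phi(b_j)$ projects to zero in every $A^n\DD_k(\ZZ_p)$, so $\phi(b_j)=0$ by Proposition \ref{projdist}. For surjectivity, a compatible system $(\phi_n)_n$ gives, for each $j$, a coherent sequence in $\varprojlim_n A^n\DD_k(\ZZ_p)\simeq\DD_k(\ZZ_p)$, whose limit is automatically $S_j$-invariant because each finite level is, and these assemble into the desired $\phi$. I do not foresee any serious obstacle: the content of the argument is purely formal once Remark \ref{cosrep} (finiteness of $J$ from Proposition \ref{actprop}) and Proposition \ref{projdist} are in place, the only mildly subtle point being to keep the $S_j$-equivariance visibly compatible with the projective system.
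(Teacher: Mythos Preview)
Your argument is correct and is exactly the unpacking of the paper's one-line proof, which reads in full: ``This is an immediate consequence of Proposition \ref{projdist}.'' What you have written is precisely the content hidden behind the word ``immediate'': reduce via Remark \ref{cosrep} to a finite product of $S_j$-invariants in the coefficient module, and then use that inverse limits commute with finite products and with taking fixed points under a group, so that Proposition \ref{projdist} on $\DD_k(\ZZ_p)$ transfers to $\mathcal{A}_k(\Gamma,\ZZ_p)$.
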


\begin{proof}
This is an immediate consequence of Proposition \ref{projdist}.
\end{proof}

We want to introduce the $U_p$-operator on automorphic forms. We set $\alpha_j=\left(\begin{smallmatrix} p &j\\ 0&1\end{smallmatrix}\right)$ for $j=0,\dots, p-1$.
Then we have
\[
\Gamma_0(p\ZZ_p)\alpha_0\Gamma_0(p\ZZ_p)=\bigsqcup_{j=0}^{p-1}\alpha_j\Gamma_0(p\ZZ_p).
\]
Note that $\alpha_j^{-1}\in\QQ^\times_p\Sigma_0(p)$ and thus $\alpha_j^{-1}$ acts on $\DD_k$.
\begin{defi}
The \emph{$U_p$-operator} on $\mathcal{A}_k(\Gamma)$ is the $\Qp$-linear map $U_p\colon\mathcal{A}_k(\Gamma)\rightarrow \mathcal{A}_k(\Gamma)$ given by
\[
(U_p\Phi)(g)=p^{\frac{k}{2}}\sum_{j=0}^{p-1}\Phi(g\alpha_j)\cdot_k\alpha_j^{-1}, \quad \text{for } g\in\GL, \Phi\in\mathcal{A}_k(\Gamma).
\]
Furthermore, we define $\widetilde{U_p}=p^{-\frac{k}{2}}U_p$.
\end{defi}

It is straightforward to check that $U_p$ is well-defined and does not depend on the choice of the $\alpha_j$. More explicitly, we have
\begin{align*}
(U_p\Phi)(g)(x^i)=p^{\frac{k}{2}}\sum_{j=0}^{p-1}\Phi(g\alpha_j)(\alpha_j^{-1}\cdot_k x^i)=\sum_{j=0}^{p-1}\sum_{\nu=0}^i\binom{i}{\nu}p^\nu j^{i-\nu}\Phi(g\alpha_j)(x^\nu).
\end{align*}
This formula shows that $U_p$ preserves the subspaces $\mathcal{A}_k(\Gamma,\ZZ_p)$ and $\mathcal{A}_{k,n}(\Gamma,\ZZ_p)$ and is well-defined on $\mathcal{A}_k^n(\Gamma,\ZZ_p)$ for all $n\geq 0$. Moreover, the specialization maps $\rho^n$ and $\rho^n_m$  and, consequently, the isomorphism in Proposition \ref{projlim} are $U_p$-equivariant.

\subsection{Lifting $U_p$-eigenforms}
\label{liftsec}
We want to prove the following theorem, which is a generalization of \cite[Corollary 2]{grea} to automorphic forms of higher weight and in the case of modular symbols originally due to Stevens (see \cite{ste}). A similar theorem with some minor errors is stated in \cite[Theorem 6.1]{fm}.
\begin{theo} The restriction of the specialization map 
\label{stevens}
\[
\rho\colon\mathcal{A}_k(\Gamma)^{U_p= p^{k/2}}\rightarrow\AA_k(\Gamma)^{U_p= p^{k/2}}
\]
is an isomorphism.
\end{theo}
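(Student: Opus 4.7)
The plan is to reduce the theorem to a single contraction estimate for $\widetilde{U_p} = p^{-k/2}U_p$ on the kernel of specialization, and then to deduce injectivity and surjectivity separately.

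First, I would establish the key overconvergent estimate: \emph{the operator $\widetilde{U_p}$ maps $\mathcal{A}_{k,n}(\Gamma,\ZZ_p)$ into $\mathcal{A}_{k,n+1}(\Gamma,\ZZ_p)$ for every $n\geq 0$.} This is the heart of the argument and uses the explicit moment formula
\[
(U_p\Phi)(g)(x^i) = \sum_{j=0}^{p-1}\sum_{\nu=0}^{i}\binom{i}{\nu}p^\nu j^{i-\nu}\Phi(g\alpha_j)(x^\nu).
\]
If $\Phi\in\mathcal{A}_{k,n}(\Gamma,\ZZ_p)$, then each value $\Phi(g\alpha_j)$ lies in $F^n\DD_k(\ZZ_p)$; in particular the moments $\Phi(g\alpha_j)(x^\nu)$ vanish for $\nu\leq k$ and have valuation at least $n-\mu+1$ for $\nu=k+\mu$ with $1\leq\mu\leq n$, and at least $0$ for $\nu>k+n$. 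Inserting these bounds yields $(U_p\Phi)(g)(x^{k+\mu'})\in p^{k+n+1}\ZZ_p$ for every $\mu'\geq 1$, which after dividing by $p^{k/2}$ is comfortably contained in $p^{n-\mu'+2}\ZZ_p$. The vanishing of the first $k+1$ moments of $\widetilde{U_p}\Phi$ is automatic, so $\widetilde{U_p}\Phi\in\mathcal{A}_{k,n+1}(\Gamma,\ZZ_p)$. This is the only delicate step, and I expect the bookkeeping with the filtration indices (especially near $\mu'=n+1$) to be the main obstacle.

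For \emph{injectivity}, suppose $\Phi\in\mathcal{A}_k(\Gamma)^{U_p=p^{k/2}}$ lies in $\ker\rho$. After scaling by a power of $p$, I may assume $\Phi\in\mathcal{A}_{k,0}(\Gamma,\ZZ_p)$. The eigenform equation rewrites as $\widetilde{U_p}\Phi=\Phi$, so by the contraction estimate $\Phi=\widetilde{U_p}^n\Phi\in\mathcal{A}_{k,n}(\Gamma,\ZZ_p)$ for all $n\geq 0$. Proposition \ref{projlim} (together with $\varprojlim_n A^n\DD_k(\ZZ_p)\simeq\DD_k(\ZZ_p)$) then forces $\Phi=0$.

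For \emph{surjectivity}, I start with a classical $U_p$-eigenform $\phi\in\AA_k(\Gamma)^{U_p=p^{k/2}}$ and take the canonical lift $\Phi_0\in\mathcal{A}_k(\Gamma)$ produced by Proposition \ref{initiallift}, then iterate: set $\Phi_n=\widetilde{U_p}^n\Phi_0$. Since $\rho$ is $U_p$-equivariant and $\widetilde{U_p}\phi=\phi$, each $\Phi_n$ still lifts $\phi$. The difference $\Phi_1-\Phi_0$ lies in $\ker\rho$, and after multiplying by a suitable power of $p$ it lies in $\mathcal{A}_{k,0}(\Gamma,\ZZ_p)$; applying the contraction estimate $n$ times gives $\Phi_{n+1}-\Phi_n=\widetilde{U_p}^n(\Phi_1-\Phi_0)\in\mathcal{A}_{k,n}(\Gamma,\ZZ_p)$ (up to the same scalar). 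Hence $(\Phi_n)$ is Cauchy in the topology induced by the projective system in Proposition \ref{projlim} and converges to some $\Phi\in\mathcal{A}_k(\Gamma)$. Continuity of $\rho$ and $\widetilde{U_p}$ with respect to this topology yields $\rho(\Phi)=\phi$ and $\widetilde{U_p}\Phi=\Phi$, so $\Phi$ is the desired preimage, completing the proof.
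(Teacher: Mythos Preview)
Your argument is correct and follows the same strategy as the paper: both proofs hinge on the contraction estimate that $\widetilde{U_p}$ sends $\mathcal{A}_{k,n}(\Gamma,\ZZ_p)$ into $\mathcal{A}_{k,n+1}(\Gamma,\ZZ_p)$, deduce injectivity by iterating on an element of $\ker\rho$, and obtain surjectivity by iterating $\widetilde{U_p}$ on an initial lift and passing to the limit through Proposition~\ref{projlim}.

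There is one genuine difference worth noting. In the surjectivity step the paper does not simply scale $\Phi_1-\Phi_0$ into $\mathcal{A}_{k,0}(\Gamma,\ZZ_p)$ as you do; instead it introduces the auxiliary integral coefficient module $V_k'(\ZZ_p)$ and the space $\mathcal{A}_k'(\Gamma,\ZZ_p)$, and uses Corollary~\ref{kernel}(a) (coming from Proposition~\ref{helps}(a)) to guarantee that each iterate $\widetilde{U_p}^n\widetilde{\Phi}$ already lies in $\mathcal{A}_k'(\Gamma,\ZZ_p)\subset\mathcal{A}_k(\Gamma,\ZZ_p)$ without any rescaling. Your telescoping/Cauchy argument with a single global scalar $p^M$ is a legitimate shortcut for the bare isomorphism statement, but the paper's route through $V_k'(\ZZ_p)$ is what makes the algorithm effective: it pins down exactly which integral structure is preserved under iteration, so that one knows the precision of each moment after $n$ applications of $\widetilde{U_p}$ (this is the content of the Corollary following Theorem~\ref{speclift}). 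In your version that precision is only known up to the unspecified constant $M$.
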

Our proof below will give an efficient algorithm to compute the unique lift and we will show that we can use this lift to compute the moments attached to a harmonic cocycle. First we need to introduce more notation. Let
\[
V_k'(\ZZ_p)=\{\omega\in V_k(\ZZ_p) \mid \omega(x^i)\in p^{\frac{k}{2}-i}\ZZ_p \text{ for all } i\in\{0,\dots,\tfrac{k}{2}\}\}.
\]
The following lemma is a straighfoward computation.
\begin{lemm}
The set $V_k'(\ZZ_p)$ is $\Gamma_0(p\ZZ_p)$-stable and thus defines an integral coefficient module.
\end{lemm}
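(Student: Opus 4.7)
The plan is to verify directly that for any $\omega \in V_k'(\ZZ_p)$ and any $\gamma \in \Gamma_0(p\ZZ_p)$, the evaluations $(\omega\cdot_k\gamma)(x^i)$ lie in $p^{k/2-i}\ZZ_p$ for $i\in\{0,\dots,k/2\}$; integrality of the remaining moments $(\omega\cdot_k\gamma)(x^i)$ for $i > k/2$ follows from the already-known $\Gamma_0(p\ZZ_p)$-stability of $V_k(\ZZ_p)$.

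First I would unwind the action. By definition $(\omega\cdot_k\gamma)(x^i)=\omega(x^i\cdot_k\gamma^{-1})$, and writing $\gamma^{-1}=\left(\begin{smallmatrix}a' & b'\\ c' & d'\end{smallmatrix}\right)$ I note that $\Gamma_0(p\ZZ_p)$ is a subgroup of $\mathrm{GL}_2(\ZZ_p)$, so $a',d'\in\ZZ_p^\times$, $b'\in\ZZ_p$, $c'\in p\ZZ_p$, and $\det(\gamma)\in\ZZ_p^\times$. In particular $\det(\gamma)^{k/2}$ is a unit since $k$ is even. Expanding via the formula for $\cdot_k$,
\[
x^i\cdot_k\gamma^{-1}=\det(\gamma)^{k/2}(c'x+d')^{k-i}(a'x+b')^i=\det(\gamma)^{k/2}\sum_{m=0}^{k}A_m\,x^m,
\]
where
\[
A_m=\sum_{\substack{j+l=m \\ 0\le j\le k-i,\ 0\le l\le i}}\binom{k-i}{j}\binom{i}{l}(c')^j(d')^{k-i-j}(a')^l(b')^{i-l}.
\]
The decisive observation is that the factor $(c')^j\in p^j\ZZ_p$ appears in every term, while all other factors lie in $\ZZ_p$. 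For fixed $m$, the smallest admissible $j$ is $\max(0,m-i)$, so
\[
A_m\in p^{\max(0,m-i)}\ZZ_p.
\]

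Applying $\omega$ yields $(\omega\cdot_k\gamma)(x^i)=\det(\gamma)^{k/2}\sum_{m=0}^{k}A_m\,\omega(x^m)$, and the defining bound for $V_k'(\ZZ_p)$, combined with $V_k'(\ZZ_p)\subset V_k(\ZZ_p)$, gives $\omega(x^m)\in p^{\max(0,k/2-m)}\ZZ_p$ for all $m\in\{0,\dots,k\}$. Hence each summand lies in $p^{e(m)}\ZZ_p$ with $e(m)=\max(0,m-i)+\max(0,k/2-m)$, and it remains to show $e(m)\ge k/2-i$ for $i\le k/2$. A short case analysis handles this: if $m\le i$ then $m\le k/2$ and $e(m)=k/2-m\ge k/2-i$; if $i<m\le k/2$ then $e(m)=(m-i)+(k/2-m)=k/2-i$; if $m>k/2$ then necessarily $m>i$ and $e(m)=m-i>k/2-i$. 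This forces $(\omega\cdot_k\gamma)(x^i)\in p^{k/2-i}\ZZ_p$, completing the stability check.

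The whole argument is essentially bookkeeping; no step is genuinely hard. The one place requiring mild care is remembering that the defining condition of $V_k'(\ZZ_p)$ only gives extra divisibility for $m\le k/2$, so the $m>k/2$ range of the sum must be controlled entirely through the $(c')^j$-contribution in $A_m$, which is precisely what the condition $p\mid c'$ delivers.
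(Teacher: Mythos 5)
Your computation is correct and is precisely the ``straightforward computation'' the paper alludes to without writing out: unwind the dual action, expand $(c'x+d')^{k-i}(a'x+b')^i$, use $p\mid c'$ to get $A_m\in p^{\max(0,m-i)}\ZZ_p$, and combine with the defining divisibilities of $V_k'(\ZZ_p)$ via the three-case check on $e(m)$. Nothing is missing.
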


The next proposition and its corollary are the reason why we are interested in the coefficient module $V_k'(\ZZ_p)$. We will see that the conditions we impose on the low moments guarantee that the $\widetilde{U_p}$-operator is well-defined on the corresponding space of automorphic forms. Note that in the case $k=0$ we have $V_k'(\ZZ_p)=V_k(\ZZ_p)$, so that no further conditions are imposed.

\begin{prop}[{\cite[Lemma 11]{grec}}]
\label{helps}
 For $j\in\{0,\dots,p-1\}$ let $\alpha_j\in\GL$ be defined as in the previous section.
\begin{itemize}
\item[(a)] Let $\omega\in\DD_k(\ZZ_p)$ such that $\pi(\omega)\in V_k'(\ZZ_p)$. Then we have $\omega\cdot_k\alpha_j^{-1}\in\DD_k(\ZZ_p)$ for $j\in\{0,\dots,p-1\}$.
\item[(b)]  Let $\omega\in F^n\DD_k(\ZZ_p)$. Then we have $\omega\cdot_k\alpha_j^{-1}\in F^{n+1}\DD_k(\ZZ_p)$ for $j\in\{0,\dots,p-1\}$.
\end{itemize}
\end{prop}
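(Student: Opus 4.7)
The plan is to reduce both parts to a single explicit formula for the action of $\alpha_j^{-1}$ on the moments of a distribution, after which each statement becomes a $p$-adic valuation bookkeeping exercise on one binomial sum. Writing $\alpha_j^{-1}=\tfrac{1}{p}\sigma_j$ with $\sigma_j=\left(\begin{smallmatrix}1 & -j\\ 0 & p\end{smallmatrix}\right)\in\Sigma_0(p)$ and applying the defining formula for the weight-$k$ action of $\QQ_p^\times\Sigma_0(p)$ on $\Ta_k$ yields $(\alpha_j^{-1}\cdot_k f)(x)=p^{-k/2}f(px+j)$. Dualizing and expanding $(px+j)^m$ binomially gives the master formula
\[
(\omega\cdot_k\alpha_j^{-1})(x^m)=p^{-k/2}\sum_{i=0}^{m}\binom{m}{i}p^{i}j^{m-i}\,\omega(x^{i}).
\]

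For (a), I will split the sum at $i=k/2$. When $i\geq k/2$ the factor $p^{i-k/2}$ is integral and $\omega(x^i)\in\ZZ_p$ by hypothesis, so the summand lies in $\ZZ_p$. When $i<k/2$ the hypothesis $\pi(\omega)\in V_k'(\ZZ_p)$ forces $\omega(x^i)\in p^{k/2-i}\ZZ_p$, which exactly cancels the denominator $p^{k/2-i}$ arising from $p^{-k/2}p^{i}$. Every moment of $\omega\cdot_k\alpha_j^{-1}$ is therefore integral, which is precisely the assertion that $\omega\cdot_k\alpha_j^{-1}\in\DD_k(\ZZ_p)$.

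For (b), I first observe that $\omega\in F^n\DD_k(\ZZ_p)\subseteq F^0\DD_k(\ZZ_p)$ implies $\omega(x^i)=0$ for $i\leq k$; hence $(\omega\cdot_k\alpha_j^{-1})(x^m)=0$ for all $m\leq k$, which takes care of the $F^0$-part of the conclusion. For $m=k+l$ with $l\geq 1$ only the indices $i=k+t$ with $1\leq t\leq l$ survive, so
\[
(\omega\cdot_k\alpha_j^{-1})(x^{k+l})=p^{k/2}\sum_{t=1}^{l}\binom{k+l}{k+t}p^{t}j^{l-t}\,\omega(x^{k+t}).
\]
Using $v_p(\omega(x^{k+t}))\geq n-t+1$ for $1\leq t\leq n$ and $v_p(\omega(x^{k+t}))\geq 0$ for $t\geq n+1$, each summand has $p$-adic valuation at least $k/2+n+1$. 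Since $k\geq 0$ and $l\geq 1$, this is automatically $\geq (n+1)-l+1=n-l+2$ for every $l\in\{1,\dots,n+1\}$, which is exactly the numerical condition defining $F^{n+1}\DD_k(\ZZ_p)$. For $l>n+1$ one only needs integrality, which is a fortiori provided by the same bound.

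The only real obstacle is the careful bookkeeping of index ranges and valuations; conceptually both statements are immediate from the explicit action formula. In (b) the estimate even has $k/2$ to spare, which reflects why the passage from weight $0$ to general even weight $k$ requires no new ideas beyond tracking the extra factor of $p^{-k/2}$ built into the weight-$k$ action.
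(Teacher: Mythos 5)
Your proof is correct: the identity $(\alpha_j^{-1}\cdot_k f)(x)=p^{-k/2}f(px+j)$ follows from the paper's action formula applied to $\alpha_j^{-1}=\tfrac1p\left(\begin{smallmatrix}1&-j\\0&p\end{smallmatrix}\right)\in\QQ_p^\times\Sigma_0(p)$, and the valuation bookkeeping in both (a) and (b) checks out against the definitions of $V_k'(\ZZ_p)$ and $F^n\DD_k(\ZZ_p)$. The paper itself gives no proof, deferring to Greenberg's Lemma 11; your direct moment computation is exactly the standard argument behind that citation, adapted to weight $k$ and eigenvalue $p^{k/2}$.
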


As pointed out before, in general the $\widetilde{U_p}$-operator does not respect the integral structures on our spaces of automorphic forms. However, if we put $\AA_k'(\Gamma,\ZZ_p)=\AA_k(\Gamma,V_k'(\ZZ_p))$ and $\mathcal{A}_k'(\Gamma,\ZZ_p)=\rho^{-1}(\AA_k'(\Gamma,\ZZ_p))\cap\mathcal{A}_k(\Gamma,\ZZ_p)$, Proposition \ref{helps} has the following consequences:
\pagebreak
\begin{cor}~
\label{kernel}
\begin{itemize}
\item[(a)] Let $\Phi\in\mathcal{A}_{k}'(\Gamma,\ZZ_p)$ such that $\widetilde{U_p}\rho(\Phi)=\rho(\Phi)$. Then $\widetilde{U_p}\Phi\in\mathcal{A}_{k}'(\Gamma,\ZZ_p)$.
\item[(b)] Let $\Phi\in\mathcal{A}_{k,n}(\Gamma,\ZZ_p)$. Then $\widetilde{U_p}\Phi\in\mathcal{A}_{k,n+1}(\Gamma,\ZZ_p)$.
\end{itemize}
\end{cor}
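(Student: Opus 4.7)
The plan is to deduce both parts directly from Proposition \ref{helps}, with (b) being an immediate termwise application and (a) needing one extra ingredient, namely the $U_p$-equivariance of $\rho$, to handle the sharper integrality condition.

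For (b), unwinding $\widetilde{U_p}=p^{-k/2}U_p$ gives
\[
(\widetilde{U_p}\Phi)(g)=\sum_{j=0}^{p-1}\Phi(g\alpha_j)\cdot_k\alpha_j^{-1}.
\]
By hypothesis $\Phi(g\alpha_j)\in F^n\DD_k(\ZZ_p)$ for every $j$, so Proposition \ref{helps}(b) places each summand in $F^{n+1}\DD_k(\ZZ_p)$, and hence so does the sum since $F^{n+1}\DD_k(\ZZ_p)$ is a $\ZZ_p$-module. This holds for every $g\in\GL$, so $\widetilde{U_p}\Phi\in\mathcal{A}_{k,n+1}(\Gamma,\ZZ_p)$.

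For (a), recall that $\mathcal{A}_k'(\Gamma,\ZZ_p)=\rho^{-1}(\AA_k'(\Gamma,\ZZ_p))\cap\mathcal{A}_k(\Gamma,\ZZ_p)$, so I would verify the two defining conditions in turn. The integrality $(\widetilde{U_p}\Phi)(g)\in\DD_k(\ZZ_p)$ follows from Proposition \ref{helps}(a): by assumption $\pi(\Phi(g\alpha_j))=\rho(\Phi)(g\alpha_j)\in V_k'(\ZZ_p)$, so each $\Phi(g\alpha_j)\cdot_k\alpha_j^{-1}$ is already in $\DD_k(\ZZ_p)$, and the sum stays there. For the sharper condition, I would invoke the $U_p$-equivariance of $\rho$ (recorded just after the definition of $\widetilde{U_p}$, ultimately coming from the right $\QQ_p^\times\Sigma_0(p)$-equivariance of $\pi\colon\DD_k\rightarrow V_k$) to write
\[
\rho(\widetilde{U_p}\Phi)=\widetilde{U_p}\rho(\Phi)=\rho(\Phi),
\]
the last equality being the fixed-point hypothesis. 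Since $\rho(\Phi)\in\AA_k'(\Gamma,\ZZ_p)$ by assumption, this yields $\rho(\widetilde{U_p}\Phi)\in\AA_k'(\Gamma,\ZZ_p)$ and completes (a).

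There is no real obstacle once Proposition \ref{helps} is in hand; the content is almost entirely bookkeeping. The only conceptual point worth flagging is that, in contrast to (b), part (a) genuinely uses the fixed-point assumption: in general $\widetilde{U_p}$ does not preserve $\mathcal{A}_k(\Gamma,\ZZ_p)$, and the refined integrality cutting out $V_k'(\ZZ_p)$ is exactly what compensates for the $p^{-k/2}$ normalization so that the operator can be iterated. This controlled behavior on $\mathcal{A}_k'(\Gamma,\ZZ_p)$ is what will allow the contraction argument behind Theorem \ref{stevens} to converge.
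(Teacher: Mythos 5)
Your proof is correct and takes essentially the same route as the paper: part (b) is a termwise application of Proposition \ref{helps}(b) to the sum defining $\widetilde{U_p}\Phi$, and part (a) combines Proposition \ref{helps}(a) (giving $\widetilde{U_p}\Phi\in\mathcal{A}_k(\Gamma,\ZZ_p)$) with the identity $\rho(\widetilde{U_p}\Phi)=\widetilde{U_p}\rho(\Phi)=\rho(\Phi)$ to land back in $\mathcal{A}_k'(\Gamma,\ZZ_p)$. Your observation that the fixed-point hypothesis is genuinely needed in (a) is exactly the point the paper makes, just stated more tersely there.
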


\begin{proof}
Property (b) is an immediate consequence of Proposition \ref{helps} (b). If $\Phi\in\mathcal{A}_{k}'(\Gamma,\ZZ_p)$, we have $\rho(\Phi)(g)=\pi(\Phi(g))\in V_k'(\ZZ_p)$ and we can apply part (a) of Proposition \ref{helps} to see that $\widetilde{U_p}\Phi\in\mathcal{A}_k(\Gamma,\ZZ_p)$. Since $\rho(\widetilde{U_p}\Phi)=\rho(\Phi)$, we conclude that $\widetilde{U_p}\Phi\in\mathcal{A}_{k}'(\Gamma,\ZZ_p)$.
\end{proof}

Now we can prove Theorem \ref{stevens}.

\begin{proof}[Proof of Theorem \ref{stevens}]
We first prove the injectivity. By Proposition \ref{multi}, it suffices to show that
\[
\mathcal{A}_k(\Gamma,\ZZ_p)^{U_p=p^{k/2}}\cap\mathrm{Ker}(\rho)=0.
\]
Let $\Phi\in\mathcal{A}_k(\Gamma,\ZZ_p)^{U_p=p^{k/2}}\cap\mathrm{Ker}(\rho)$. By definition, we have
\[
\mathcal{A}_k(\Gamma,\ZZ_p)\cap\mathrm{Ker}(\rho)=\mathcal{A}_{k,0}(\Gamma,\ZZ_p).
\]
Part (b) of the above proposition implies that $\Phi=(\widetilde{U_p})^n\Phi\in\mathcal{A}_{k,n}(\Gamma,\ZZ_p)$ for each $n\geq 0$, hence $\Phi=0$.\par
We now turn to the surjectivity. Let $\phi\in\AA_k(\Gamma)^{U_p=p^{k/2}}$. After multiplying with a suitably chosen scalar $u\in\QQ^\times_p$, we may assume $\phi\in\AA_k'(\Gamma,\ZZ_p)^{U_p=p^{k/2}}$.  Let $\widetilde{\Phi}\in\mathcal{A}_k'(\Gamma,\ZZ_p)$ be an arbitrary lift of $\phi$. By part (a) of the above proposition, we have
\[
\widetilde{\Phi}^n=(\widetilde{U_p})^n\widetilde{\Phi}\in\mathcal{A}_k'(\Gamma,\ZZ_p).
\]
We define $\Phi^n=\rho^n(\widetilde{\Phi}^n)\in\mathcal{A}_k^n(\Gamma,\ZZ_p)$. Then we have
\[
\widetilde{U_p}\widetilde{\Phi}-\widetilde{\Phi}\in\mathcal{A}_{k,0}(\Gamma,\ZZ_p)
\]
and thus by part (b) of the above proposition,
\[
\widetilde{\Phi}^{n+1}-\widetilde{\Phi}^n\in\mathcal{A}_{k,n}(\Gamma,\ZZ_p).
\]
This implies that $\rho^{n+1}_n(\Phi^{n+1})-\Phi^n=\rho^n(\widetilde{\Phi}^{n+1}-\widetilde{\Phi}^n)=0$ and since $\rho^n(\widetilde{\Phi}^{n+1})=\widetilde{U_p}\Phi^n$, we have constructed a compatible system of eigenforms $\Phi^n\in\mathcal{A}_k^n(\Gamma,\ZZ_p)^{U_p=p^{k/2}}$. By Proposition \ref{projlim}, the $\Phi^n$ glue together to an automorphic form
\[
\Phi\in\varprojlim_{n}\mathcal{A}_k^n(\Gamma,\ZZ_p)^{U_p=p^{k/2}}\simeq \mathcal{A}_k(\Gamma,\ZZ_p)^{U_p=p^{k/2}}.
\]
By construction, we have $\rho(\Phi)=\phi$, which completes the proof.
\end{proof}

\begin{rem}
\label{gencontr}
It is worth noting that Theorem \ref{stevens} and the construction described in the proof can easily be generalized to arbitrary $U_p$-eigenforms with eigenvalue $\lambda\in E_p$ such that $\nu_p(\lambda)<k+1$, where $E_p$ is a finite extension of $\Qp$. The proofs are completely analogous if one replaces $p^{\frac{k}{2}}$ by $\lambda$ and $\ZZ_p$ by the ring of integers $\mathcal{O}_{E_p}$ of $E_p$ and notes that the coefficient module $V_k'(\ZZ_p)$ needs to adapted accordingly. In the case of modular symbols this is done in \cite{grec}.
\end{rem}

\subsection{Automorphic forms and harmonic cocycles}

Following \cite[Section 5.2 and Section 6.1]{fm} we can identify the space of harmonic cocycles with a subspace of the space of $p$-adic automorphic forms and use the unique lift constructed in Theorem \ref{stevens} to compute the moments attached to a harmonic cocycle. 
\begin{defi}
Let $g_0=\left(\begin{smallmatrix} 0& 1\\ p& 0\end{smallmatrix}\right)\in\GL$. The \emph{Atkin-Lehner involution} $W_p$ on $\AA_k(\Gamma)$ is defined by
\[
(W_p\phi)(g)=\phi(gg_0)\cdot_k g_0^{-1}, \quad \text{for } g\in\GL, \phi\in\AA_k(\Gamma).
\]
Clearly, $W_p\phi\in\AA_k(\Gamma)$.
\end{defi}

Note that this definition does not extend to the space of rigid analytic automorphic forms since $g_0\not\in\QQ^\times_p\Sigma_0(p)$.

\begin{defi}
A $p$-adic automorphic form $\phi\in\AA_k(\Gamma)$ is said to be \emph{new at $p$} if
\[
U_p\phi=-p^{\frac{k}{2}}W_p\phi.
\]
The subspace of $p$-adic automorphic forms that are new at $p$ is denoted by $\AA_k(\Gamma)^{p\text{-new}}$.
\end{defi}

Let $\phi\in\AA_k(\Gamma)$. Let $c_\phi\colon\TT_1\rightarrow V_k$ be the map sending $e\in\TT_1$ to $c_\phi(e)=g\cdot_k \phi(g)$ where $e=ge_0$ for $g\in\GL$. This is well-defined since $\text{Stab}_\GL(e_0)=\QQ^\times_p\Gamma_0(p\ZZ_p)$ and $\QQ^\times_p$ acts trivially on $V_k$. Furthermore, the map $c_\phi$ satisfies
\[
c_\phi(\gamma e)=\gamma g\cdot_k\phi(\gamma g)=\gamma\cdot_k c_\phi(e), \quad \text{for all } e\in\TT_1, \gamma\in\Gamma.
\]
Let $e\in\TT_1$. By translating the actions of $U_p$ and $W_p$, we have $c_\phi\in C_h(\Gamma, V_k)$ for $\phi\in\left(\AA_k(\Gamma)^{\text{$p$-new}}\right)^{U_p=p^{k/2}}$. We obtain the following.

\begin{prop}
\label{cocaut}
The map
\begin{align*}
\left(\AA_k(\Gamma)^{\textnormal{$p$-new}}\right)^{U_p=p^{k/2}}&\rightarrow C_h(\Gamma, V_k)\\
\phi&\mapsto c_\phi
\end{align*}
is an isomorphism of $\QQ_p$-vector spaces.
\end{prop}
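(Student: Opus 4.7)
The plan is to construct an explicit inverse $c \mapsto \phi_c$ defined by $\phi_c(g) = g^{-1} \cdot_k c(ge_0)$. Well-definedness and membership in $\AA_k(\Gamma)$ are formal: right $\Gamma_0(p\ZZ_p)$-equivariance follows from $he_0 = e_0$ for $h \in \Gamma_0(p\ZZ_p)$, left $\QQ_p^\times$-invariance from the triviality of the scalar action on both $\TT_1$ and $V_k$, and left $\Gamma$-invariance from the $\Gamma$-equivariance of $c$. Once the correct source and target have been established, $\QQ_p$-linearity is evident, injectivity of $\phi \mapsto c_\phi$ is immediate from $c_\phi(ge_0) = g \cdot_k \phi(g)$ and invertibility of the $\GL$-action on $V_k$, and the mutual inverse identities $\phi_{c_\phi} = \phi$ and $c_{\phi_c} = c$ follow by direct substitution.

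The main step is to match the defining conditions of $C_h(\Gamma, V_k)$ with those of $(\AA_k(\Gamma)^{p\text{-new}})^{U_p = p^{k/2}}$. The geometric input is that $\GL$ acts transitively on $\TT_0$ and that the $p+1$ edges out of the vertex $gv_0$ are $g\bar{e_0} = gg_0 e_0$ together with $g\alpha_j e_0$ for $j = 0, \dots, p-1$; this uses that $\bar{e_0} = g_0 e_0$ (since $g_0^2 = pI$ acts trivially) and the easy computation $\alpha_j v_1 = v_0$. With this identification, writing out $\sum_{s(e) = gv_0} c_\phi(e) = 0$ via the definition of $c_\phi$ and pulling the invertible action of $g$ out front gives
\[
g_0 \cdot_k \phi(gg_0) + \sum_{j=0}^{p-1} \alpha_j \cdot_k \phi(g\alpha_j) = 0.
\]
Converting each left action into a right one via $h \cdot_k v = v \cdot_k h^{-1}$ and multiplying by $p^{k/2}$ transforms this into $(U_p\phi)(g) = -p^{k/2}(W_p\phi)(g)$, i.e., the $p$-new condition. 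Similarly, antisymmetry $c_\phi(gg_0 e_0) = -c_\phi(ge_0)$ translates directly into $(W_p\phi)(g) = -\phi(g)$. Combining these, $p$-newness together with $W_p\phi = -\phi$ is equivalent to $p$-newness together with $U_p\phi = p^{k/2}\phi$, since the relation $U_p\phi = -p^{k/2}W_p\phi$ makes the two scalar equations interchangeable.

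Running each implication in reverse shows that $\phi_c$ is a $U_p$-eigenform of eigenvalue $p^{k/2}$ and is new at $p$, completing the bijection. The only delicate point throughout is bookkeeping: correctly identifying the edges at $gv_0$ via the actions of $\alpha_j$ and $g_0$ on $\TT$, and consistently converting between the left and right $\GL$-actions on $V_k$ so that harmonicity takes the exact shape of the $U_p$-Atkin--Lehner relation. The rest is essentially a transcription of two combinatorial identities on $\TT$ into two operator identities on $\AA_k(\Gamma)$, exactly as hinted by the paper's remark ``By translating the actions of $U_p$ and $W_p$.''
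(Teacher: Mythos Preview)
Your proof is correct and follows exactly the approach the paper intends: the paper does not spell out a proof but only indicates ``By translating the actions of $U_p$ and $W_p$, we have $c_\phi\in C_h(\Gamma,V_k)$\ldots,'' and your argument is precisely this translation carried out in full detail, including the explicit inverse $\phi_c(g)=g^{-1}\cdot_k c(ge_0)$, the identification of the edges leaving $gv_0$ via $g_0$ and the $\alpha_j$, and the equivalence of harmonicity with the $p$-new relation and of antisymmetry with $W_p\phi=-\phi$.
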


The following theorem tells us that the methods of the previous section can be applied to compute the moments of the measure associated to a harmonic cocycle $c\in C_h(\Gamma,V_k)$.

\begin{theo}
\label{speclift}
Let $c\in C_h(\Gamma,V_k)$ and $\phi_c\in\AA_k(\Gamma)$ be the corresponding $p$-adic automorphic form. Then the unique lift $\Phi_c$ of $\phi_c$ in $\mathcal{A}_k(\Gamma)^{U_p=p^{k/2}}$ in Theorem \ref{stevens} satisfies
\[
\Phi_c(g)(x^i)=m(\mu_c,g,i) \quad \text{for } g\in\GL, i\geq 0.
\]
\end{theo}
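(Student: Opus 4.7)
The plan is to exhibit an explicit candidate lift
\[
\Phi_c(g)(f) := \int_{g\ZZ_p} (f \cdot_k g^{-1})(x) \, d\mu_c(x), \quad f \in \Ta_k,\; g \in \GL,
\]
where the integrand, extended by zero outside $g\ZZ_p$, lies in $A_k^{\mathrm{loc}}$ and is integrable against $\mu_c$ by Theorem \ref{amice}. Specializing $f = x^i$ will immediately yield the moment formula $\Phi_c(g)(x^i) = m(\mu_c,g,i)$, so the work is to show that $\Phi_c$ defines an element of $\mathcal{A}_k(\Gamma)^{U_p = p^{k/2}}$ lifting $\phi_c$; the uniqueness in Theorem \ref{stevens} will then force $\Phi_c$ to coincide with the lift constructed there.

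The verification of $\Phi_c \in \mathcal{A}_k(\Gamma)$ splits into two parts. Left $\Gamma$-invariance $\Phi_c(\gamma g) = \Phi_c(g)$ follows from the $\Gamma$-equivariance of $\mu_c$ in Lemma \ref{zero}(b) together with associativity of the right weight-$k$ action: changing variables from $\gamma g\ZZ_p$ back to $g\ZZ_p$ introduces a factor of $\cdot_k\gamma$ that cancels against $\cdot_k(\gamma g)^{-1}$. For right $\Gamma_0(p\ZZ_p)$-equivariance, using $\sigma\ZZ_p = \ZZ_p$ for $\sigma \in \Gamma_0(p\ZZ_p) \subset \mathrm{GL}_2(\ZZ_p)$, one rewrites $f\cdot_k(g\sigma)^{-1} = (\sigma\cdot_k f)\cdot_k g^{-1}$ and reads off $\Phi_c(g\sigma) = \Phi_c(g)\cdot_k\sigma$. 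This step relies on the compatibility between the left $\QQ^\times_p\Sigma_0(p)$ action on $\Ta_k$ and the right $\GL$ action at $\sigma^{-1}$. Continuity of $\Phi_c(g)$ as a functional on $\Ta_k$, hence $\Phi_c(g)\in\DD_k$, is a consequence of the Amice-Velu/Vishik bounds.

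Next, $\rho(\Phi_c) = \phi_c$ is checked by evaluating on $Q\in\mathcal{P}_k$: combining $U_{ge_0} = g\ZZ_p$, the relation $c(ge_0) = g\cdot_k\phi_c(g)$ from Proposition \ref{cocaut}, and the duality between the left action on $V_k$ and the right action on $\mathcal{P}_k$, one computes directly $\phi_c(g)(Q) = \int_{g\ZZ_p}(Q\cdot_k g^{-1})\,d\mu_c = \Phi_c(g)(Q)$. For the $U_p$-eigenvalue identity, observe that the balls $\alpha_j\ZZ_p = p\ZZ_p + j$ partition $\ZZ_p$, so $\{g\alpha_j\ZZ_p\}_{j=0}^{p-1}$ partitions $g\ZZ_p$, and the algebraic identity
\[
(\alpha_j^{-1}\cdot_k f)\cdot_k(g\alpha_j)^{-1} = f\cdot_k\bigl(\alpha_j(g\alpha_j)^{-1}\bigr) = f\cdot_k g^{-1}
\]
collapses the defining $U_p$-sum into a single integral, giving $(U_p\Phi_c)(g)(f) = p^{k/2}\int_{g\ZZ_p}(f\cdot_k g^{-1})\,d\mu_c = p^{k/2}\Phi_c(g)(f)$.

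The main obstacle is not conceptual but notational: several different weight-$k$ actions are simultaneously in play (the right $\GL$ action on $A_k^{\mathrm{loc}}$, the left $\QQ^\times_p\Sigma_0(p)$ action on $\Ta_k$, the dual right action on $\DD_k$, and the left action on $V_k$ dual to the right action on $\mathcal{P}_k$), and the proof reduces essentially to verifying that these four actions are compatible at each step. Once the conventions are aligned, each of the three verifications above is a short direct computation using only Lemma \ref{zero}, Theorem \ref{amice}, and Proposition \ref{cocaut}, and the theorem follows from the uniqueness of the lift in Theorem \ref{stevens}.
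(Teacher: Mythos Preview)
Your proposal is correct and follows essentially the same approach as the paper: define $\Phi_c(g)(f)=\int_{g\ZZ_p}(f\cdot_k g^{-1})\,d\mu_c$, verify membership in $\mathcal{A}_k(\Gamma)$ via the $\Gamma$-equivariance of $\mu_c$ and $\sigma\ZZ_p=\ZZ_p$, check $\rho(\Phi_c)=\phi_c$ on polynomials, establish the $U_p$-eigenvalue via the partition $\ZZ_p=\bigsqcup_j\alpha_j\ZZ_p$, and invoke uniqueness from Theorem~\ref{stevens}. Your explicit emphasis on the compatibility of the four weight-$k$ actions is a helpful clarification, but the underlying argument is identical to the paper's.
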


\begin{proof}
We define $\Phi_c\colon\GL\rightarrow \DD_k$ by
\[
\Phi_c(g)(f)=\int_{g\ZZ_p}(f\cdot_k g^{-1})(x)\text{d}\mu_c(x)\in\QQ_p
\]
for $g\in\GL$ and $f\in \Ta_k$. Note that we may view $f$ as an element of $A_k^\mathrm{loc}$ by extending with $0$ outside $\ZZ_p$ and thus the above integral is well-defined. Note that the inclusion $\Ta_k \rightarrow A_k^\mathrm{loc}$ is compatible with the $\Gamma_0(p\ZZ_p)$-actions and continuous. By the continuity of $\mu_c$, we have $\Phi_c(g)\in\DD_k$ for all $g\in\GL$. Let $u\in\QQ^\times_p$,$\gamma\in\Gamma$ and $\sigma\in\Gamma_0(p\ZZ_p)$. Then we get
\begin{align*}
\Phi_c(u\gamma g \sigma)(f)&=\int_{u\gamma g\sigma\ZZ_p}(f\cdot_k(u\gamma g\sigma)^{-1})(x)\text{d}\mu_c(x)\\
&=\int_{g\ZZ_p} ((\sigma\cdot_k f)\cdot_k g^{-1})(x)\text{d}\mu_c(x)\\
&=\Phi_c(g)(\sigma\cdot_k f),
\end{align*}
and thus $\Phi_c\in\mathcal{A}_k(\Gamma)$.  It remains to show that $\rho(\Phi_c)=\phi_c$ and $U_p\Phi_c=p^{\frac{k}{2}}\Phi_c$. For $i\in\{0,\dots,k\}$ we have
\begin{align*}
\Phi_c(g)(x^i)&=\int_{g\ZZ_p} x^i\cdot_k g^{-1}\text{d}\mu_c(x)=\mu_c(g\ZZ_p)(x^i\cdot_k g^{-1})\\
&=(g^{-1}\cdot_k c(g e_0))(x^i)=\phi_c(g)(x^i).
\end{align*}
Since $\bigsqcup_{j=0}^{p-1}\alpha_j \ZZ_p=\ZZ_p$, we compute
\begin{align*}
(U_p\Phi_c)(g)(f)&=p^{\frac{k}{2}}\sum_{j=0}^{p-1}\Phi_c(g\alpha_j)(f\cdot_k \alpha_j)=p^{\frac{k}{2}}\sum_{j=0}^{p-1}\int_{g\alpha_j\ZZ_p}(f\cdot_k g^{-1})(x)\text{d}\mu_c(x)\\
&=p^{\frac{k}{2}}\int_{g\ZZ_p}(f\cdot_k g^{-1})(x)\text{d}\mu_c(x)=p^{\frac{k}{2}}\Phi_c(g)(f).
\end{align*}
The uniqueness of the lift in Theorem \ref{stevens} completes the proof.
\end{proof}

Let $c\in C_h(\Gamma,V_k)$. We may rescale such that $\phi_c\in\AA_k'(\Gamma,\ZZ_p)$ and $\Phi_c\in\mathcal{A}_k'(\Gamma,\ZZ_p)$. Let $\Phi_0\in\mathcal{A}_k'(\Gamma,\ZZ_p)$ be an arbitrary lift. Then we can rephrase the construction of $\Phi_c$ given in the proof of Theorem \ref{stevens} as follows.

\begin{cor}
For all $n\geq 1$ we have
\[
((\widetilde{U_p})^n\Phi_0)(g)(x^{k+i})=m(\mu_c,g,k+i) \quad (\textnormal{mod } p^{n-i+1})
\]
for $i\in\{1,\dots, n\}$ and $g\in\GL$. 
\end{cor}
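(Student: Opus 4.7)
My plan is to reduce the claim to the kernel-filtration behavior already established in Corollary \ref{kernel}(b), combined with the identification of moments given by Theorem \ref{speclift}.

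\textbf{Step 1 (identify the target).} By Theorem \ref{speclift}, the canonical lift $\Phi_c \in \mathcal{A}_k(\Gamma)^{U_p = p^{k/2}}$ from Theorem \ref{stevens} satisfies $\Phi_c(g)(x^j) = m(\mu_c, g, j)$ for all $j \geq 0$ and $g \in \GL$. Since $\widetilde{U_p} = p^{-k/2} U_p$, the equation $U_p \Phi_c = p^{k/2} \Phi_c$ reads $\widetilde{U_p} \Phi_c = \Phi_c$, so $(\widetilde{U_p})^n \Phi_c = \Phi_c$ for every $n \geq 0$. Thus, the desired congruence is equivalent to
\[
\bigl((\widetilde{U_p})^n (\Phi_0 - \Phi_c)\bigr)(g)(x^{k+i}) \in p^{n-i+1}\ZZ_p
\quad \text{for } i \in \{1, \ldots, n\}.
\]

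\textbf{Step 2 (place the difference in the kernel filtration).} Both $\Phi_0$ and $\Phi_c$ lie in $\mathcal{A}_k'(\Gamma, \ZZ_p)$ and lift $\phi_c$. Hence $\rho(\Phi_0 - \Phi_c) = 0$, which by the definition of $\pi$ (dual to the inclusion $\mathcal{P}_k \hookrightarrow \Ta_k$) means $(\Phi_0 - \Phi_c)(g) \in F^0 \DD_k(\ZZ_p)$ for every $g$; i.e., $\Phi_0 - \Phi_c \in \mathcal{A}_{k,0}(\Gamma, \ZZ_p)$.

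\textbf{Step 3 (iterate $\widetilde{U_p}$).} Applying Corollary \ref{kernel}(b) repeatedly yields
\[
(\widetilde{U_p})^n (\Phi_0 - \Phi_c) \in \mathcal{A}_{k,n}(\Gamma, \ZZ_p),
\]
so $\bigl((\widetilde{U_p})^n(\Phi_0 - \Phi_c)\bigr)(g) \in F^n \DD_k(\ZZ_p)$ for every $g \in \GL$. Unwinding the definition of $F^n \DD_k(\ZZ_p)$ gives exactly the bound $\omega(x^{k+i}) \in p^{n-i+1} \ZZ_p$ for $i \in \{1, \ldots, n\}$, which together with Step 1 yields the claimed congruence.

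I do not expect any real obstacle: the argument is essentially the same construction used in the surjectivity part of the proof of Theorem \ref{stevens}, observed at the level of moments rather than abstractly. The only point requiring minimal care is that both $\Phi_0$ and $\Phi_c$ must lie in $\mathcal{A}_k'(\Gamma,\ZZ_p)$ so that Corollary \ref{kernel}(b) can be applied to their difference; this is granted by the rescaling hypothesis preceding the statement.
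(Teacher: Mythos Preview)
Your proof is correct and follows essentially the same approach as the paper, which presents the corollary as an immediate rephrasing of the construction in the proof of Theorem~\ref{stevens} combined with Theorem~\ref{speclift}. Your Steps~2--3 simply make explicit the telescoping argument implicit there: $\Phi_0-\Phi_c\in\mathcal{A}_{k,0}(\Gamma,\ZZ_p)$, and iterating Corollary~\ref{kernel}(b) pushes this into $\mathcal{A}_{k,n}(\Gamma,\ZZ_p)$, which is exactly the required congruence on moments. (A minor remark: for Step~2 you only need $\Phi_0,\Phi_c\in\mathcal{A}_k(\Gamma,\ZZ_p)$, not the stronger $\mathcal{A}_k'(\Gamma,\ZZ_p)$; the latter is what guarantees $\widetilde{U_p}$ preserves integrality along the iteration, but once you know $\Phi_c$ exists integrally, Corollary~\ref{kernel}(b) applies directly to the difference.)
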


In other words, we just need to compute an arbitrary initial lift $\Phi_0\in\mathcal{A}_k'(\Gamma,\ZZ_p)$ of $\phi_c$ and repeatedly apply the operator $\widetilde{U_p}$ in order to compute the moments to the desired precision. We have explicitly constructed such an initial lift in Proposition \ref{initiallift}, which can be readily computed, assuming that the values of $\phi_c$ are known. Of course one has to take the scaling factor arising from the possible denominators in in the initial lift into account when deciding how many times the $\widetilde{U_p}$-operator has to be applied in order to compute the moments to the desired precision.

\begin{rem}
By carefully examining the explicit formula for the $\widetilde{U_p}$-operator, we see that one could lose precision when computing the values $(\widetilde{U_p}\Phi_0)(g)(x^i)$ for $i\leq k/2$. But since
\[
(\widetilde{U_p}\Phi_0)(g)(x^i)=\rho(\widetilde{U_p}\Phi_0)(g)(x^i)=\phi_c(x^i) \quad \textnormal{for } i\leq k,
\]
one can replace the low moments by the corresponding values of $\phi_c$, which are assumed to be known to the desired precision.
\end{rem}

\section{Computation of the $\mathcal{L}$-operator}
\label{compu}
In this section, we describe our method to compute the $\mathcal{L}$-operator building on \cite{fm}.
\subsection{Computing fundamental domains}
The following lemma is essential for the algorithm  to compute fundamental domains for the action of $\Gamma$ on $\mathcal{T}$ in \cite{fm}.
\begin{lemm}[{\cite[Lemma 2.2]{fm}}]
\label{normalization}
There is a set of coset representatives $\{g_i\}_{i\in I}$ for $\GL/\QQ^\times_p\Gamma_0(p\ZZ_p)\simeq\mathcal{T}_1$ given by matrices with coefficients in $\ZZ$. Moreover, there is an effective algorithm that finds for a given matrix $g\in\GL$ a scalar $u\in\QQ^\times_p$ and a matrix $\sigma\in\Gamma_0(p\ZZ_p)$ such that $gu\sigma=g_i$ for some $i\in I$. An analogous statement holds for $\GL/\QQ^\times_p\textnormal{GL}_2(\ZZ_p)\simeq\mathcal{T}_0$.
\end{lemm}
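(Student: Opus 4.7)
The plan is to give an explicit algorithm; the existence of integer-coefficient representatives will follow immediately from its correctness. I describe the method for edges (the vertex case being strictly simpler), built from a set of integer ``step matrices''
\[
\alpha_j=\begin{pmatrix} p & j\\ 0 & 1\end{pmatrix}\text{ for } j\in\{0,\ldots,p-1\},\qquad \alpha_\infty=\begin{pmatrix} 1 & 0\\ 0 & p\end{pmatrix},
\]
together with explicit integer representatives $\{\gamma_r\}_{r\in\PP^1(\FF_p)}$ for the $p+1$ cosets of $\mathrm{GL}_2(\ZZ_p)/\Gamma_0(p\ZZ_p)\cong\PP^1(\FF_p)$ (for instance $\gamma_r=\bigl(\begin{smallmatrix} 1 & 0\\ r & 1\end{smallmatrix}\bigr)$ for $r\in\FF_p$ and $\gamma_\infty=\bigl(\begin{smallmatrix} 0 & 1\\ 1 & 0\end{smallmatrix}\bigr)$).

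Given $g\in\GL$, first normalize by a scalar $u_0\in\QQ^\times_p$ (a suitable power of $p$) so that $g\in\mathrm{M}_2(\ZZ_p)$ has at least one entry a $p$-adic unit; set $d=v_p(\det g)\geq 0$. If $d\geq 1$, reduce $g\bmod p$: since $\det g\equiv 0\pmod p$, the two rows of $g\bmod p$ are $\FF_p$-linearly dependent, and one may read off a unique $j_1\in\PP^1(\FF_p)$ such that $\alpha_{j_1}^{-1}g\in\mathrm{M}_2(\ZZ_p)$ (concretely, $j_1=\bar a/\bar c$ if the second row is nonzero $\bmod\ p$, and $j_1=\infty$ otherwise). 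Replace $g$ by $g':=\alpha_{j_1}^{-1}g$; then $v_p(\det g')=d-1$, so we may iterate. After $d$ such steps one obtains a factorization $g=\alpha_{j_1}\cdots\alpha_{j_d}\cdot h$ with $h\in\mathrm{GL}_2(\ZZ_p)$. Reducing $h\bmod p$ identifies the coset $h\in\gamma_r\Gamma_0(p\ZZ_p)$ and yields $h=\gamma_r\sigma$ with $\sigma\in\Gamma_0(p\ZZ_p)$. The integer representative is
\[
g_i=\alpha_{j_1}\alpha_{j_2}\cdots\alpha_{j_d}\cdot\gamma_r\in\mathrm{M}_2(\ZZ),
\]
and packaging the scalar $u_0$ with $\sigma$ gives the required pair $(u,\sigma)\in\QQ^\times_p\times\Gamma_0(p\ZZ_p)$ with $gu\sigma=g_i$.

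The main obstacle is verifying, at each iteration, that the choice of $j_i$ is uniquely determined and that $\alpha_{j_i}^{-1}g$ remains in $\mathrm{M}_2(\ZZ_p)$: since $\alpha_{j_i}^{-1}$ carries a single factor of $p^{-1}$ in its first row, the required integrality is exactly the mod-$p$ linear dependence of the rows of $g$ forced by $\det g\equiv 0\pmod p$. Uniqueness of $j_i$ modulo the remaining $\QQ^\times_p\Gamma_0(p\ZZ_p)$-ambiguity on the right reflects the fact that the distinct $\alpha_j$ point to distinct neighbors in $\TT$; termination is immediate since $v_p(\det \cdot)$ drops by $1$ at every step. The vertex case follows by the same algorithm, omitting the final $\gamma_r$-step because any $h\in\mathrm{GL}_2(\ZZ_p)$ already lies in $\mathrm{Stab}_\GL(v_0)$.
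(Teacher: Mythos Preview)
The paper does not actually prove this lemma; it is quoted verbatim from \cite[Lemma~2.2]{fm} and only the consequence (that edges and vertices can be encoded by normalized integer matrices) is used afterwards. So there is no ``paper's own proof'' to compare against beyond the reference.

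That said, your constructive argument is correct and is essentially the algorithm Franc--Masdeu give: peel off one factor of $p$ from $\det g$ at a time by left-multiplying with a suitable $\alpha_j^{-1}$, using the mod-$p$ rank-one condition to pick $j$, until you land in $\mathrm{GL}_2(\ZZ_p)$, then read off the $\Gamma_0(p\ZZ_p)$-coset. Two small points worth tightening. First, the formula ``$j_1=\bar a/\bar c$'' is only literally correct when $\bar c\neq 0$; the right statement is that the first row of $g\bmod p$ is the unique $\FF_p$-multiple $j_1$ of the (nonzero) second row, and one takes $j_1=\infty$ exactly when the second row vanishes mod $p$. Second, it is worth remarking that after each step the new matrix $\alpha_{j}^{-1}g$ still has a unit entry (its second row, resp.\ first row when $j=\infty$, is inherited from $g$), so the hypothesis needed to iterate is preserved; this is implicit in your argument but not stated. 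With those clarifications the proof is complete and matches the intended approach.
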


The algorithm used in the proof of the above lemma is used to encode vertices and edges of $\TT$ in terms of fixed matrix representatives. For a given matrix in $\GL$ we refer to the corresponding representative as the \emph{normalization} of this matrix. Another feature of the normalization is that that if $g_i$ is the normalization of a matrix $g\in\GL$, we have $d(v_0,g v_0)=\nu_p(\det(g_i))$, where $\nu_p$ is the $p$-adic valuation, normalized such that $\nu_p(p)=1$. The main result of \cite{fm} is stated below.

\begin{theo}[{\cite[Theorem 1.1]{fm}}]
There exists an effective algorithm for computing the following data:
\begin{itemize}
\item[(1)] A finite connected subtree $\mathcal{F}$ of $\mathcal{T}$, whose edges comprise a complete set of distinct orbit
representatives for the action of $\Gamma$ on the edges of $\mathcal{T}$.
\item[(2)] The stabilizers $\mathrm{Stab}_\Gamma(v)$, $\mathrm{Stab}_\Gamma(e)$ for all $v\in\mathcal{F}_0$, $e\in\mathcal{F}_1$.
\item[(3)]  A pairing of the boundary vertices of $\mathcal{F}$ that describes how the boundary vertices are identified
in the quotient graph $\Gamma\textbackslash\mathcal{T}$.
\end{itemize}
\end{theo}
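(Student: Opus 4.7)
The plan is to build $\mathcal{F}$ incrementally by a breadth-first exploration of $\TT$ starting from the base vertex $v_0$, using the normalization procedure of Lemma \ref{normalization} as the central computational primitive. At each stage we maintain a finite connected subtree $\mathcal{F}_n\subset\TT$ of vertices that are pairwise $\Gamma$-inequivalent, together with a list of \emph{boundary vertices} we still need to process. For each boundary vertex $v$, we enumerate its $p+1$ neighbours $w$; for each such $w$ we either (i) recognise that $w$ is $\Gamma$-equivalent to some vertex $v'$ already in $\mathcal{F}_n$, in which case we record the pairing $v\sim v'$ together with the witnessing element $\gamma\in\Gamma$ and do not explore past $w$, or (ii) conclude that $w$ lies in a fresh $\Gamma$-orbit, in which case we add $w$ and the connecting edge to $\mathcal{F}_n$ and place $w$ on the list of boundary vertices for the next round.

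The critical subroutine is the equivalence test, and this is where the quaternionic setup does the real work. Given two vertices $w,v'\in\TT_0$ with normalizations $g_w,g_{v'}\in\GL$ (both assumed at the same distance from $v_0$, since $\Gamma$ preserves this distance through reduced norm), deciding whether there is $\gamma\in\Gamma$ with $\gamma v'=w$ amounts to searching for $\gamma\in R[\tfrac{1}{p}]^\times_1$ whose image in $\GL$ satisfies $\iota(\gamma)g_{v'}\in g_w\QQ_p^\times\mathrm{GL}_2(\ZZ_p)$. Clearing $p$-powers and using that $B$ is \emph{definite}, this translates into finding elements of the $\ZZ$-order $R$ whose positive-definite reduced norm form takes a prescribed value $p^{2d(v_0,w)}$ (up to units). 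Enumerating short vectors of a positive-definite rational quadratic form of fixed target value is a finite problem, and for each candidate one verifies via Lemma \ref{normalization} whether it actually realises the desired identification. Stabilizers are handled by the same machinery, applied with $v'=w$ and target norm $1$ (resp.\ $p$ for edges).

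Termination is guaranteed by Proposition \ref{actprop}(a): since $\Gamma\textbackslash\TT$ is finite, there is some radius $D$ such that every vertex at distance $>D$ from $v_0$ is $\Gamma$-equivalent to one at distance $\leq D$. Hence the BFS inevitably runs out of new orbits, every boundary vertex gets paired, and the resulting $\mathcal{F}$ is a finite connected subtree whose edges form a complete set of orbit representatives (finite, because $\Gamma\textbackslash\TT_1$ is finite as well by Proposition \ref{actprop}(a)). The pairing data from step (i) above is precisely item~(3) of the theorem, and the stabilizer computations described above give item~(2).

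The main obstacle I expect is the equivalence/stabilizer search. Its correctness depends on having an effective method to enumerate elements of $R$ (or $R[\tfrac{1}{p}]$) of bounded reduced norm, and its efficiency depends on keeping that enumeration as small as possible; one must also take care that the \emph{a priori} bound $D$ on the exploration radius is chosen conservatively enough to certify termination yet not so wastefully as to force unnecessary lattice-point enumerations. All other steps — building the tree, keeping track of normalizations, and extracting the quotient pairing — reduce to bookkeeping once this search is in hand.
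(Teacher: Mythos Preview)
The paper does not give its own proof of this theorem; it is quoted from \cite{fm}, and the paper only summarises the crucial step in one sentence: reduce the $\Gamma$-equivalence test for vertices and edges to a shortest vector search in a lattice, working only with a finite $p$-adic approximation of the splitting $\iota$. Your outline --- breadth-first exploration plus a lattice-point enumeration for the equivalence and stabilizer subroutines --- is exactly this strategy, so you are on the same track as the cited source.

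Two points deserve correction. First, your assertion that $\Gamma$ preserves the distance to $v_0$ is false: elements of $\Gamma\subset\SL$ preserve only the \emph{parity} of $d(v_0,\cdot)$ (the bipartition of $\TT$), not the distance itself. Hence when testing whether a new neighbour $w$ is $\Gamma$-equivalent to some vertex already in $\mathcal{F}_n$, you must compare $w$ against all previously found vertices of the correct parity, not only those at distance $d(v_0,w)$; correspondingly the target value in the reduced-norm search is not determined by $d(v_0,w)$ alone. Second, you omit the point the paper itself singles out as crucial: the splitting $\iota$ is a genuinely $p$-adic object, and to make the lattice search effective one must work with a finite $p$-adic approximation of $\iota$ and argue that this approximation suffices to decide equivalence exactly. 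That justification is the substance of \cite[Section~3]{fm} and is where the effectiveness claim is actually earned.
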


For a detailed description and analysis of the complexity of the above algorithm see \cite[Section 3]{fm}. The crucial step is to reduce the problem wether two edges (or vertices) are $\Gamma$-equivalent to a shortest vector search in a lattice by only using an approximation of the fixed splitting up to some finite $p$-adic precision.\par Starting from this point, Franc and Masdeu can also easily compute a basis for $C_h(\Gamma, V_k)$ and work with the spaces of $p$-adic automorphic forms by Remark \ref{cosrep}, which makes the methods of the previous section applicable to our setting.

\subsection{From general integrals to moments}
By the results of Section \ref{autf}, in order to compute $\lambda^\tau$ efficiently, it remains to show that the relevant integrand admits a nice analytic expression on a certain covering of $\PP^1(\Qp)$ by disjoint compact open balls. We will construct an explicit covering $\mathcal{U}_{\tau_1,\tau_2}$ attached to $\tau_1,\tau_2\in\Hp(K_p)$ such that, for arbitrary $P\in \mathcal{P}_k$, the function
$P(x)\log_p\left(\frac{x-\tau_2}{x-\tau_1}\right)$ admits such an analytic expression on each ball $U\in\mathcal{U}_{\tau_1,\tau_2}$. This makes it possible to compute the corresponding line integral and hence in particular the map $\lambda^\tau$.  The idea to use the special covering we describe in this section is implicitly in the code by Franc and Masdeu built on \cite{fm} and we give all the details. This is an extension of the method described in \cite{dp}.\par
Since $K_p$ is an unramified extension of $\Qp$, the points $\tau_1,\tau_2\in\mathcal{H}_p(K_p)$ are mapped to vertices of $\TT$ under the reduction map. Let $n=d(\operatorname{\mathrm{red}}(\tau_1),\operatorname{\mathrm{red}}(\tau_2))$ and set $w_0=\operatorname{\mathrm{red}}(\tau_2)$ and $w_n=\operatorname{\mathrm{red}}(\tau_1)$. Let $w_1,\dots, w_{n-1}$ be the vertices on the geodesic $w_0\rightarrow w_n$ between $w_0$ and $w_n$ labeled such that $w_{j+1}$ is adjacent to $w_{j}$ for $j\in\{0,\dots,n-1\}$. Let
\[
\TT(w_0,w_n)=\{e\in\TT_1 \mid s(e)=w_j \text{ for a } j\in\{0,\dots,n\}, t(e)\neq  w_j \text{ for all } j\in\{0,\dots, n\}\}
\]
denote the finite set of all edges leaving the geodesic. By definition, the collection of  disjoint compact open balls
\[
\mathcal{U}_{\tau_1,\tau_2}=\{U_e\in\mathcal{B} \mid e\in\TT(w_0,w_n)\}\subset\mathcal{B}
\]
is a covering of $\PP^1(\Qp)$. Let $j\in\{0,\dots ,n\}$. We denote by $g_{j,\nu}\in\GL$, $\nu\in\{0, \dots,n_j\}$ the normalized matrix representatives for the edges in $\TT(w_0,w_n)\cap\TT(w_j)$ as in Lemma \ref{normalization}. Note that  for $n>0$ we have $n_j=p-1$ for $j\in\{0,n\}$ and $n_j=p-2$ otherwise. In the case $n=0$ we have $n_0=p$. We get $U_{g_{j,\nu}e_0}=g_{j,\nu}\ZZ_p$. Thus, we may write
\begin{align*}
\int_{\tau_1}^{\tau_2}\omega_c(P)&=\sum_{j=0}^n\sum_{\nu=0}^{n_j}\int_{g_{j,\nu}\ZZ_p}P(x)\log_p\left(\frac{x-\tau_2}{x-\tau_1}\right)\text{d}\mu_c(x)\\
&=\sum_{j=0}^n\sum_{\nu=0}^{n_j}\int_{g_{j,\nu}\ZZ_p}f_{j,\nu}\cdot_k g_{j,\nu}^{-1}(x)\text{d}\mu_c(x),
\end{align*}
where
\[
f_{j,\nu}(x)=\left(P(x)\log_p\left(\frac{x-\tau_2}{x-\tau_1}\right)\right)\cdot_k g_{j,\nu}=(P\cdot_k g_{j,\nu})(x)l_{j,\nu}(x)
\]
and
\[
l_{j,\nu}(x)=\log_p\left(\frac{g_{j,\nu}x-\tau_2}{g_{j,\nu}x-\tau_1}\right).
\]
For simplicity, we assume from now on that $P$ is in fact a monomial in $\{1,x,\dots,x^k\}$. 
\begin{prop}
\label{anaexpr}
For all $j\in\{0,\dots,n\}$ and $\nu\in\{0,\dots,n_j\}$ the function $f_{j,\nu}$ is analytic on $\ZZ_p$ and has a series expansion of the form
\[
f_{j,\nu}(x)=a_{0,j,\nu}p^{-\frac{k}{2}\nu_p(\det(g_{j,\nu}))}+\sum_{i=1}^\infty a_{i,j,\nu}p^{r_{i,j,\nu}}x^i
\]
with
\[
r_{i,j,\nu}=\max\{i-k,0\}-\frac{k}{2}\nu_p(\det(g_{j,\nu}))-\left\lfloor\frac{\log(i)}{\log(p)}\right\rfloor
\]
and $a_{i,j,\nu}$ belonging to $\mathcal{O}_p$, the ring of integers of $K_p$, for all $i\in\NN_0$.
\end{prop}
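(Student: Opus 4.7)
My plan is to split $f_{j,\nu}(x) = Q(x) \cdot l_{j,\nu}(x)$, where $Q(x) = (P \cdot_k g_{j,\nu})(x)$ and $l_{j,\nu}(x) = \log_p\!\bigl((g_{j,\nu}x - \tau_2)/(g_{j,\nu}x - \tau_1)\bigr)$, analyze each factor, and then multiply. Writing $P = x^m$ and $g_{j,\nu} = \left(\begin{smallmatrix}a & b\\ c & d\end{smallmatrix}\right) \in \mathrm{M}_2(\ZZ)$ (Lemma~\ref{normalization}), the slash action gives $Q(x) = \det(g_{j,\nu})^{-k/2}(cx+d)^{k-m}(ax+b)^m$, a polynomial of degree at most $k$ whose coefficients lie in $p^{-\frac{k}{2}\nu_p(\det g_{j,\nu})}\ZZ_p$. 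This accounts for the prefactor $p^{-\frac{k}{2}\nu_p(\det g_{j,\nu})}$ and, having degree at most $k$, is responsible for the jump $\max\{i-k,0\}$ in $r_{i,j,\nu}$ once $i$ exceeds $k$.

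Next, using $g_{j,\nu}x - \tau = ((a-c\tau)x + (b-d\tau))/(cx+d)$ together with $\log_p(XY)=\log_p X + \log_p Y$, the $\log_p(cx+d)$ contributions cancel, yielding $l_{j,\nu}(x) = \log_p[(a-c\tau_2)x + (b-d\tau_2)] - \log_p[(a-c\tau_1)x + (b-d\tau_1)]$. For each $\tau \in \{\tau_1,\tau_2\}$ I factor $(a-c\tau)x + (b-d\tau) = (b-d\tau)(1+\eta_\tau x)$ with $\eta_\tau = (a-c\tau)/(b-d\tau) \in K_p$ and expand via the standard series $\log_p(1+y) = \sum_{n\geq 1}(-1)^{n-1}y^n/n$. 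The main obstacle of the proof is to show $\nu_p(\eta_\tau) \geq 1$; equivalently, $|g_{j,\nu}^{-1}\tau|_p > 1$, since $g_{j,\nu}^{-1}\tau = -(b-d\tau)/(a-c\tau)$. By hypothesis $g_{j,\nu}e_0 \in \TT(w_0,w_n)$ has source $w_j$ on the geodesic and target off of it, so $\operatorname{\mathrm{red}}(\tau_i) \in \{w_0,w_n\}$ lies on the source side of $g_{j,\nu}e_0$; by $\GL$-equivariance of $\operatorname{\mathrm{red}}$ (Proposition~\ref{redmap}), $\operatorname{\mathrm{red}}(g_{j,\nu}^{-1}\tau_i)$ is a vertex in the $v_1$-subtree of $e_0$, distinct from $v_0$. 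The identification $\operatorname{\mathrm{red}}^{-1}(v_0) = \mathcal{X}_0$ together with its $\GL$-translates gives the standard fact that $\operatorname{\mathrm{red}}(\tau)$ lies in the $v_1$-subtree precisely when $|\tau|_p > 1$; since $K_p/\QQ_p$ is unramified, $\nu_p$ on $K_p$ takes integer values, forcing $\nu_p(\eta_{\tau_i}) \geq 1$.

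With $\nu_p(\eta_\tau) \geq 1$, the expansion $\log_p[(a-c\tau)x+(b-d\tau)] = \log_p(b-d\tau) + \sum_{n\geq 1}\frac{(-1)^{n-1}}{n}\eta_\tau^n x^n$ converges on $\ZZ_p$; the constant lies in $\mathcal{O}_p$ because $\log_p(p)=0$ forces $\log_p$ to map $K_p^\times$ into $\mathcal{O}_p$, and the $x^n$-coefficient satisfies $\nu_p(\eta_\tau^n/n) \geq n - \nu_p(n) \geq n - \lfloor\log n/\log p\rfloor$. Writing $Q(x) = p^{-\frac{k}{2}\nu_p(\det g_{j,\nu})}\sum_{\ell=0}^k p_\ell x^\ell$ with $p_\ell \in \ZZ_p$ and $l_{j,\nu}(x) = \sum_n c_n x^n$ (so $c_0 \in \mathcal{O}_p$ and $\nu_p(c_n) \geq n - \lfloor\log n/\log p\rfloor$ for $n \geq 1$), the coefficient of $x^i$ in $f_{j,\nu}$ is $p^{-\frac{k}{2}\nu_p(\det g_{j,\nu})}\sum_{\ell+n=i,\,0\leq \ell\leq k}p_\ell c_n$. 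For $0 \leq i \leq k$ the term $\ell = i$, $n = 0$ lies in $\mathcal{O}_p\cdot p^{-\frac{k}{2}\nu_p(\det g_{j,\nu})}$, which already beats $r_{i,j,\nu}$; for $i > k$ every summand has $n = i-\ell \geq 1$, and since $n - \lfloor\log n/\log p\rfloor$ is non-decreasing in $n$, the minimum valuation is attained at $n = i-k$, giving $(i-k) - \lfloor\log(i-k)/\log p\rfloor \geq (i-k) - \lfloor\log i/\log p\rfloor$. Combined with the prefactor this yields valuation at least $r_{i,j,\nu}$ as desired.
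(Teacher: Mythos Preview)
Your proof is correct and follows essentially the same route as the paper: factor $f_{j,\nu}=(P\cdot_k g_{j,\nu})\cdot l_{j,\nu}$, use the tree geometry to establish $|g_{j,\nu}^{-1}\tau_i|_p\geq p$ (your $\nu_p(\eta_{\tau_i})\geq 1$), expand the logarithm as a power series, and multiply. Your algebraic handling of $l_{j,\nu}$ (factoring out $b-d\tau$ directly) differs only cosmetically from the paper's change of variable $x\mapsto g_{j,\nu}^{-1}\tau$, and you are actually more explicit than the paper in tracking the valuations through the Cauchy product---the paper simply asserts ``By multiplying, we get a series expansion of the desired form.''
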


Note that the continuity of $\mu_c$ then implies that
\[
\int_{\tau_1}^{\tau_2}\omega_c(P)=\sum_{j=0}^n\sum_{\nu=0}^{n_j}\left(a_{0,j,\nu}p^{-\frac{k}{2}\nu_p(\det(g_{j,\nu}))}m(\mu_c,g_{j,\nu},0)+\sum_{i=1}^\infty a_{i,j,\nu}p^{r_{i,j,\nu}}m(\mu_c,g_{j,\nu},i)\right).
\]
Together with the methods of the previous section, this gives an efficient way to compute the above integral up to a prescribed precision.
The crucial step to prove Proposition \ref{anaexpr} is to find an analytic expression for the function $l_{j,\nu}$.

\begin{prop}
For $j\in\{0,\dots,n\}$ and $\nu\in\{0,\dots,n_j\}$ the function $l_{j,\nu}$ is analytic on $\ZZ_p$ and has a series expansion of the form
\[
l_{j,\nu}(x)=a_{0,j,\nu}+\sum_{i=1}^\infty a_{i,j,\nu}\frac{p^i}{i}x^i
\]
with $a_{i,j,\nu}\in\mathcal{O}_p$ for all $i\in\NN_0$.
\end{prop}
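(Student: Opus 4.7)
The plan is to reduce the analysis of $l_{j,\nu}$ to that of $\log_p(1+\alpha x)$ for $\alpha \in p\mathcal{O}_p$ and then expand as a Taylor series. Writing $g_{j,\nu} = \left(\begin{smallmatrix} a & b \\ c & d \end{smallmatrix}\right)$, the factorization
\[
g_{j,\nu}x - \tau = \frac{(a - \tau c)x + (b - \tau d)}{cx+d}
\]
shows that the $\log_p(cx+d)$ contributions cancel in the difference $l_{j,\nu}(x) = \log_p(g_{j,\nu}x-\tau_2) - \log_p(g_{j,\nu}x-\tau_1)$. Setting $u_\tau := a-\tau c$ and $v_\tau := b-\tau d$, both of which are nonzero since $\tau \notin \PP^1(\QQ_p)$ and $g_{j,\nu}$ is invertible, factoring out $v_\tau$ yields
\[
\log_p(u_\tau x + v_\tau) = \log_p(v_\tau) + \log_p(1 + \alpha_\tau x), \qquad \alpha_\tau := u_\tau/v_\tau.
\]
A direct inversion of $g_{j,\nu}$ identifies $\alpha_\tau = -1/g_{j,\nu}^{-1}\tau$, so the entire statement reduces to the bound $|g_{j,\nu}^{-1}\tau|_p \geq p$ for $\tau \in \{\tau_1,\tau_2\}$.

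The main obstacle is this valuation bound, which I will establish via tree geometry. By hypothesis, $\operatorname{\mathrm{red}}(\tau) \in \{w_0, w_n\}$ lies on the geodesic from $w_0$ to $w_n$; by construction of $\TT(w_0,w_n)$, the edge $g_{j,\nu}e_0$ leaves this geodesic at $s(g_{j,\nu}e_0) = w_j$, so its target $t(g_{j,\nu}e_0)$ is off the geodesic. Consequently the tree geodesic from $\operatorname{\mathrm{red}}(\tau)$ to $t(g_{j,\nu}e_0)$ must pass through $w_j$ and traverse $g_{j,\nu}e_0$. Applying the tree automorphism $g_{j,\nu}^{-1}$, which maps $g_{j,\nu}e_0$ to $e_0$, the vertex $\operatorname{\mathrm{red}}(g_{j,\nu}^{-1}\tau) = g_{j,\nu}^{-1}\operatorname{\mathrm{red}}(\tau)$ either equals $v_1$ (in the boundary cases $\operatorname{\mathrm{red}}(\tau) = w_j$) or lies in the connected component of $\TT \setminus \{e_0\}$ containing $v_1$. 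Invoking Proposition \ref{redmap} together with the explicit description of the affinoid $g_0\mathcal{X}_0$ reducing to $v_1$, where $g_0 = \left(\begin{smallmatrix} 0 & 1 \\ p & 0 \end{smallmatrix}\right)$, a short calculation shows that $|z|_p = p$ on $g_0\mathcal{X}_0$ while $|z|_p > p$ on the preimages of strictly further $v_1$-side vertices. This yields $|g_{j,\nu}^{-1}\tau|_p \geq p$ and hence $\alpha_\tau \in p\mathcal{O}_p$.

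Once $\alpha_\tau \in p\mathcal{O}_p$ is in hand, the rest is routine. Writing $\alpha_\tau = pt_\tau$ with $t_\tau \in \mathcal{O}_p$ and applying the Taylor expansion $\log_p(1+pz) = \sum_{i\geq 1}(-1)^{i-1}p^i z^i/i$, which converges on $\mathcal{O}_p$, makes $\log_p(1+\alpha_\tau x)$ analytic on $\ZZ_p$ with coefficient of $x^i$ equal to $(-1)^{i-1}\frac{p^i}{i}t_\tau^i$. Subtracting the $\tau_1$ and $\tau_2$ contributions then produces
\[
l_{j,\nu}(x) = \log_p\!\left(\frac{v_{\tau_2}}{v_{\tau_1}}\right) + \sum_{i \geq 1}\frac{p^i}{i}(-1)^{i-1}\bigl(t_{\tau_2}^i - t_{\tau_1}^i\bigr)x^i,
\]
and since the chosen branch satisfies $\log_p(K_p^\times) \subset \mathcal{O}_p$, the constant term $a_{0,j,\nu} := \log_p(v_{\tau_2}/v_{\tau_1})$ lies in $\mathcal{O}_p$, while $a_{i,j,\nu} := (-1)^{i-1}(t_{\tau_2}^i - t_{\tau_1}^i) \in \mathcal{O}_p$ for $i \geq 1$, completing the claimed expansion.
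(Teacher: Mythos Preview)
Your proof is correct and follows essentially the same route as the paper. Both arguments reduce the claim to the key estimate $|g_{j,\nu}^{-1}\tau|_p\geq p$ for $\tau\in\{\tau_1,\tau_2\}$, establish it by the same tree-geometric observation (the edge $g_{j,\nu}e_0$ points away from the geodesic $w_0\to w_n$, so after translating by $g_{j,\nu}^{-1}$ the reductions of $\tau_1,\tau_2$ land on the $v_1$-side of $e_0$), and then expand $\log_p(1-x/g_{j,\nu}^{-1}\tau)$ as a power series. Your linear-fractional bookkeeping via $u_\tau,v_\tau,\alpha_\tau=-1/g_{j,\nu}^{-1}\tau$ is just a repackaging of the paper's change-of-variables identity $\log_p\!\bigl(\tfrac{g_{j,\nu}x-\tau_2}{g_{j,\nu}x-\tau_1}\bigr)=C+\log_p\!\bigl(\tfrac{x-g_{j,\nu}^{-1}\tau_2}{x-g_{j,\nu}^{-1}\tau_1}\bigr)$; the resulting constant terms agree and the series coefficients are literally the same.
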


\begin{proof}
We have
\begin{align*}
l_{j,\nu}(x)=\log_p\left(\frac{g_{j,\nu}x-\tau_2}{g_{j,\nu}x-\tau_1}\right)=\log_p\left(\frac{g_{j,\nu}x-g_{j,\nu}g_{j,\nu}^{-1}\tau_2}{g_{j,\nu}x-g_{j\nu}g_{j,\nu}^{-1}\tau_1}\right)=\log_p\left(\frac{x-g_{j,\nu}^{-1}\tau_2}{x-g_{j,\nu}^{-1}\tau_1}\right)+C,
\end{align*}
where 
\[
C=\log_p\left(\frac{cg_{j,\nu}^{-1}\tau_1+d}{cg_{j,\nu}^{-1}\tau_2+d}\right)\in\mathcal{O}_p, \quad g_{j,\nu}=\begin{pmatrix} a&b\\ c&d\end{pmatrix}\in\GL.
\]
Consequently, we obtain
\[
l_{j,\nu}(x)=C+\log_p\left(\frac{g_{j,\nu}^{-1}\tau_2}{g_{j,\nu}^{-1}\tau_1}\right)+\log_p\left(\frac{1-\frac{x}{g_{j,\nu}^{-1}\tau_2}}{1-\frac{x}{g_{j,\nu}^{-1}\tau_1}}\right).
\]
Let $e\in\TT_1$ be an arbitrary edge. We denote by $\TT_e^{c}$ the maximal subtree of $\TT$ containing $s(e)$ and not containing $e$. Note that $\TT$ is the union of the tree $\TT_e$ defined in Subsection \ref{btt} and $\TT_e^c$ and these trees intersect precisely in $s(e)$. By definition of $\TT(w_0,w_n)$, we have $w_0,w_n\in(\TT_{g_{j,\nu}e_0}^c)_0$. Thus, we obtain
\[
g_{j,\nu}^{-1}w_0, g_{j,\nu}^{-1}w_n\in(\TT_{e_0}^c)_0.
\]
By definition of the reduction map, this implies $|g_{j,\nu}^{-1}\tau_1|_p\geq p$ and $|g_{j,\nu}^{-1}\tau_2|_p\geq p$. Since $x\in\ZZ_p$, we can use the geometric series and the series defining the logarithm,
\[
\log_p(1+x)=\sum_{i=1}^\infty\frac{(-1)^{i+1}}{i}x^i, \quad x\in p\mathcal{O}_p,
\]
to get the desired series expansion.
\end{proof}

\begin{proof}[Proof of Proposition \ref{anaexpr}]
By the above proposition, we have
\[
l_{j,\nu}(x)=a_{0,j,\nu}+\sum_{i=1}^\infty a_{i,j,\nu}\frac{p^i}{i}x^i
\]
with $a_{i,j,\nu}\in\mathcal{O}_p$ for all $i\in\NN_0$. We also have
\[
P\cdot_k g_{j,\nu}(x)=p^{-\frac{k}{2}\nu_p(\det(g_{j,\nu}))}\cdot\sum_{i=0}^k b_{i,j,\nu} x^i
\]
with $b_{i,j,\nu}\in\ZZ_p$ for all $i\in\{0,\dots, k\}$. By multiplying, we get a series expansion of the desired form.
\end{proof}

An examination of the formula in Proposition \ref{anaexpr} shows that the integral may be computed up to a finite $p$-adic precision $M$ by computing for each $j\in\{0,\dots, n\}$ and $\nu\in\{0,\dots,n_j\}$ the following data:
\[
m(\mu_c,g_{j,\nu},i) \quad (\textnormal{mod } p^{M-r_{i,j,\nu}}) ,\quad  i\in\{0,\dots,N_{j,\nu}\}, 
\]
with $N_{j,\nu}=\max\{i\in\NN_0 \mid r_{i,j,\nu}< M\}$ and $r_{0,j,\nu}=-\frac{k}{2}\nu_p(\det(g_{j,\nu}))$. Together with the methods described in Section \ref{autf}, this describes an efficient way to compute the map $\lambda^\tau$. 

\begin{rem}
It is worth noting that this method can be used to integrate other locally analytic functions against $\mu_c$ as long as one can compute a locally analytic decomposition for the integrand. See for example \cite[Section 6.1]{fm}, where a similar method is used to compute values of rigid analytic modular forms using the Poisson kernel described in Subsection \ref{poi}.
\end{rem}

\section{Examples}
\label{exa}

After giving some elementary examples, we primarily focus on phenomena which occur in the distribution of slopes of $\mathcal{L}$-invariants for $p=2$. Throughout this section, we assume $N^{+}=1$ and consequently, $N=N^{-}$.
\subsection{An elementary example}

In our first example, we put $p=3$, $N^{-}=2$. Let $\eta$ denote the Dedekind $\eta$-function and set
\[
f(\tau)=(\eta(\tau)\eta(2\tau)\eta(3\tau)\eta(6\tau))^2 \in\mathcal{S}_4(\Gamma_0(6))^{\mathrm{new}}.
\]
The correponding harmonic cocycle $c_f$ spans $C_h(\Gamma,V_2)$. Thus, we can apply our algorithm to obtain
\[
\mathcal{L}_3(f)=1 +  3^{2} + 2 \cdot 3^{7} + 3^{8} + 2 \cdot 3^{9} \quad (\text{mod } 3^{10}).
\]
This example was also computed by Teitelbaum in \cite[Section 3]{tei90} up to $3$-adic precision $5$. If we increase the weight to $6$, the $\mathcal{L}$-operator is again given by multiplication with a scalar. We compute
\[
\mathcal{L}=(3^{-1} +  2 \cdot 3^{2} + 3^{4} + 3^{7} + 2 \cdot 3^{8} + 2 \cdot 3^{9} )\cdot\operatorname{\mathrm{id}}_{H^1(\Gamma,V_4)} \quad (\text{mod } 3^{10})
\]
and note that this an example of a non-integral $\mathcal{L}$-invariant.
\subsection{Slopes of $\mathcal{L}$-invariants for $p=2$}
In the remainder, we want to analyze the distribution of (the slopes of) $\mathcal{L}$-invariants for growing weight.\par For $k\in\ZZ$ even,  $k\geq 2$ we denote by $\alpha_\mathcal{L}(k,p,N)$ the (finite) sequence of slopes of the $p$-adic $\mathcal{L}$-operator of weight $k$ acting on $H^1(\Gamma, V_{k-2})$ and set $d_k(p,N)=\dim_\Qp H^1(\Gamma, V_{k-2})$. Moreover, we denote by $\alpha_\mathcal{L}^{+}(k,p,N)$ the (finite) sequence of slopes of the $p$-adic $\mathcal{L}$-operator of weight $k$ acting on $H^1(\Gamma, V_{k-2})^{W_N=1}$ and similarly by $\alpha_\mathcal{L}^{-}(k,p,N)$ the (finite) sequence of slopes of the $p$-adic $\mathcal{L}$-operator of weight $k$ acting on $H^1(\Gamma, V_{k-2})^{W_N=-1}$. Here, $W_N$ denotes the Atkin-Lehner involution at $N$. Finally, let $\varepsilon_W(k,p,N)$ denote the (finite) sequence of eigenvalues of the Atkin-Lehner involution $W_p$ at $p$ acting on this space.

\begin{rem}
It should be pointed out that we consider the Atkin-Lehner involutions arising from the space $\mathcal{S}_k(\Gamma_0(pN))^{\mathrm{new}}$. This is important to note since the Jacquet-Langlands correspondence together with the $p$-adic unformization interchange the $\pm 1$ eigenspaces for these operators. More details can be found in \cite[Section 5]{kli}.
\end{rem}

We first consider the case $N=3$.

\begin{center}
\begin{table}[h]
\begin{tabular}{|p{0.7 cm}|p{2 cm}||>{\centering\arraybackslash}p{4cm}|>{\centering\arraybackslash}p{4cm}||>{\centering\arraybackslash}p{2cm}|}
\hline
$k$ & $d_k(2,3)$ & \multicolumn{2}{c||}{$\alpha_\mathcal{L}(k,2,3)$} & $\epsilon_W(k,2,3)$\\
& & \multicolumn{1}{c}{$\alpha_\mathcal{L}^+(k,2,3)$} & \multicolumn{1}{c||}{$\alpha_\mathcal{L}^-(k,2,3)$} & \\
\hline
$4$     &$1$& $1_1$ & & $1_1$\\
\hline
$6$     &$1$& $0_1$ & &  $-1_1$\\
\hline
$8$     &$1$&  & $-1_1$ & $-1_1$\\ 
\hline
$10$     &$1$&  & $0_1$ & $1_1$\\ 
\hline
$12$     &$3$& $-1_1$ & $-4_2$ & $-1_1, 1_2$\\ 
\hline
$14$     &$1$& $-1_1$ &  & $-1_1$\\ 
\hline
$16$     &$3$& $-4_2$ & $-2_1$ & $-1_2,1_1$\\ 
\hline
$18$     &$3$& $-4_2$ & $-1_1$ & $-1_1, 1_2$\\ 
\hline
$20$     &$3$& $-2_1$ & $-6_2$ & $-1_1, 1_2$\\ 
\hline
$22$     &$3$& $-2_1$ & $-4_2$ & $-1_2, 1_1$\\ 
\hline
$24$     &$5$& $-6_2$ & $-2_1,-7_2$& $-1_3, 1_2$\\ 
\hline
$26$     &$3$& $-6_2$ & $-2_1$ & $-1_1, 1_2$\\ 
\hline
$28$     &$5$& $-2_1,-7_2$ & $-5_2$ & $-1_2,1_3$\\ 
\hline
$30$     &$5$& $-2_1,-7_2$ & $-6_2$& $-1_3,1_2$\\ 
\hline
$32$     &$5$& $-5_2$ & $-3_1,-7_2$& $-1_3,1_2$\\ 
\hline
$34$     &$5$& $-5_2$ & $-2_1, -7_2$& $-1_2,1_3$\\ 
\hline
$36$     &$7$& $-3_1,-7_2$ & $-6_2, -10_2$ & $-1_3, 1_4$\\ 
\hline
$38$     &$5$& $-3_1,-7_2$ & $-5_2$& $-1_3,1_2$\\ 
\hline
$40$     &$7$& $-6_2,-10_2$ & $-3_1, -9_2$& $-1_4,1_3$\\ 
\hline
\end{tabular}
\bigskip
\caption{$p=2$, $N=3$}
\end{table}
\end{center}

The computational results suggest various links between slopes of different weights, which are made precise in the following. 
\begin{con}
\label{con1}
For $k\in 4\ZZ$, $k\geq 8$, we have
\[
\alpha_\mathcal{L}^-(k,2,3)=\alpha_\mathcal{L}^+(k+4,2,3)=\alpha_\mathcal{L}^+(k+6,2,3)=\alpha_\mathcal{L}^-(k+10,2,3)
\]
and consequently
\[
\alpha_\mathcal{L}(k,2,3)=\alpha_\mathcal{L}(k+6,2,3) \quad \text{as well as} \quad
\epsilon_W(k,2,3)=\epsilon_W(k+6,2,3).
\]
\end{con}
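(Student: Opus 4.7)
My plan is to reformulate the conjecture via Theorem \ref{psiso} as a statement about Newton polygons of $\mathcal{L}$ acting on the finite-dimensional spaces $C_h(\Gamma, V_{k-2})^{W_N = \pm 1}$. The four-term chain of equalities decomposes into two elementary building blocks: a shift-by-$4$-with-$W_N$-flip relation $\alpha_{\mathcal{L}}^{-}(k,2,3) = \alpha_{\mathcal{L}}^{+}(k+4,2,3)$ (together with its counterpart at weights $\equiv 2 \pmod 4$), and a shift-by-$2$ relation $\alpha_{\mathcal{L}}^{+}(k+4,2,3) = \alpha_{\mathcal{L}}^{+}(k+6,2,3)$ preserving $W_N = +$. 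The consequences $\alpha_{\mathcal{L}}(k) = \alpha_{\mathcal{L}}(k+6)$ and $\varepsilon_W(k) = \varepsilon_W(k+6)$ then follow by overlapping these building blocks for consecutive $k \in 4\ZZ$.

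For the shift-by-$2$ part I would invoke the Greenberg--Stevens formula $\mathcal{L}_p(f) = -2\operatorname{\mathrm{dlog}}(a_p(\kappa))|_{\kappa = k}$ together with the control theorem (Theorem \ref{stevens}), which realises the $\mathcal{L}$-slope data at integer weights inside the local geometry of the $p = 2$ eigenvariety of tame level $3$ on the definite quaternionic side. The desired matching of slopes at weights $k+4$ and $k+6$ on the $W_N = +$ part would follow from an explicit weight-shift symmetry $\kappa \mapsto \kappa + 2$ on the relevant components of this eigenvariety preserving $a_p(\kappa)$ up to a unit, and hence preserving $\operatorname{\mathrm{dlog}}(a_p(\kappa))$ at $\kappa = k$. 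Explicit descriptions of $p = 2$ eigencurves at small tame level are available through work of Buzzard and Kilford and several subsequent authors, and one would transfer the relevant structure to the quaternionic setup via the Jacquet--Langlands correspondence.

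For the shift-by-$4$-with-flip part I would try to construct an explicit isomorphism $C_h(\Gamma, V_{k-2})^{W_N = -} \xrightarrow{\sim} C_h(\Gamma, V_{k+2})^{W_N = +}$ intertwining the $\mathcal{L}$-operators. A natural candidate uses multiplication by a chosen element of $V_4^\Gamma$ combined with the tame Atkin--Lehner $W_N$ to flip the sign. The specific weight shift $4$ should reflect the torsion of $\operatorname{\mathrm{Stab}}_\Gamma(v)$ and $\operatorname{\mathrm{Stab}}_\Gamma(e)$ for the quaternion algebra of discriminant $3$, whose units have orders dividing $6$; the induced periodicity of $\dim V_k^\Gamma$ and of the spaces $\operatorname{Hom}_\Gamma(V_{k-2}, V_{k+2})$ would be exploited to produce the intertwiner.

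The main obstacle, in my view, is that $\mathcal{L}$ is genuinely transcendental: its definition involves the $p$-adic logarithm and Coleman integration, so even a Hecke-equivariant isomorphism between cohomology groups need not intertwine $\mathcal{L}$. One must engage with the analytic structure of the $p = 2$ eigenvariety, where even the strongest form of the Gouv\^ea--Mazur conjecture fails (Buzzard--Calegari), so a naive local-constancy-of-slopes principle is unavailable. A realistic route seems to be to exhibit an arithmetic symmetry of the eigenvariety at tame level $3$ (or equivalently of an associated two-variable $p$-adic $L$-function) that propagates through the $\operatorname{\mathrm{dlog}}$-formula to the required slope identities; absent such a structural input, the conjecture stands as strong numerical evidence awaiting a conceptual explanation.
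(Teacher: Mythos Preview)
The statement you are attempting to prove is labelled \texttt{con} in the paper, i.e.\ it is a \emph{Conjecture}, not a theorem. The paper offers no proof whatsoever: immediately after stating it, the author writes ``The above table verifies this statement up to $k=28$ computationally'' and later, in the closing paragraphs of Section~\ref{exa}, ``At present, we do not have any explanation for the phenomena described in the above conjectures, but our computations give convincing evidence.'' So there is no proof in the paper to compare your proposal against.

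Your proposal is not a proof either, and you essentially acknowledge this in your final paragraph. The two building blocks you isolate are reasonable heuristics, but neither is established. For the shift-by-$2$ part, invoking a hypothetical ``weight-shift symmetry $\kappa\mapsto\kappa+2$ on the relevant components of the eigenvariety preserving $a_p(\kappa)$ up to a unit'' is precisely the content of the conjecture rephrased on the eigenvariety side; the Buzzard--Kilford type results you allude to concern tame level $1$ and the boundary of weight space, and do not directly yield such a symmetry at tame level $3$ near classical weights. For the shift-by-$4$-with-flip part, you correctly identify the core difficulty yourself: a Hecke-equivariant isomorphism between the cohomology groups need not intertwine $\mathcal{L}$, because $\mathcal{L}$ is defined via the $p$-adic logarithm and Coleman integration, not purely through the Hecke module structure. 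Producing an intertwiner for $\mathcal{L}$ is the whole problem, and multiplying by an element of $V_4^\Gamma$ gives no a priori control over the Coleman integrals. In short, your write-up is an honest outline of what a proof \emph{might} require, together with a candid admission that the key structural inputs are missing; that is an accurate assessment of the current state of affairs, matching the paper's own position that Conjecture~\ref{con1} remains open.
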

The above table verifies this statement up to $k=28$ computationally.
\begin{rem}
By the explicit dimension formulas in \cite{ma}, it is easy to verify that 
\[
d_k(2,3)=\dim_\CC \mathcal{S}_{k}(\Gamma_0(6))^{\mathrm{new}}= \dim_\CC \mathcal{S}_{k+6}(\Gamma_0(6))^{\mathrm{new}}=d_{k+6}(2,3)
\]
for $k$ as above. It should also be noted that the statement on the $\mathcal{L}$-invariants concerns only the slopes, the $\mathcal{L}$-invariants themselves are not equal, which is shown by our computations. Interestingly, while the Atkin-Lehner involution at $3$ makes the relations between $\mathcal{L}$-invariants visible, as stated above, the Atkin-Lehner eigenvalues at $2$ follow the same pattern.
\end{rem}
Another interesting observation is that aside from the highest slope, which appears with multiplicity one, all other slopes appear in pairs.\par
\vspace{\baselineskip}
Next, we consider the case $N=5$.

\begin{center}
\begin{table}[h]

\begin{tabular}{|p{0.7 cm}|p{2 cm}||>{\centering\arraybackslash}p{4 cm}|>{\centering\arraybackslash}p{4 cm}||>{\centering\arraybackslash}p{2cm}|}
\hline
$k$ & $ d_k(2,5)$ & \multicolumn{2}{c||}{$\alpha_\mathcal{L}(k,2,5)$} & $\epsilon_W(k,2,5)$\\
& & \multicolumn{1}{c}{$\alpha_\mathcal{L}^+(k,2,5)$} & \multicolumn{1}{c||}{$\alpha_\mathcal{L}^-(k,2,5)$} & \\
\hline
$4$     &$1$&  &$2_1$ & $-1_1$\\
\hline
$6$     &$3$& $-2_2$ & $0_1$ & $-1_1, 1_2$\\
\hline
$8$     &$1$&  &$-1_1$ & $-1_1$\\ 
\hline
$10$     &$3$& $-5_2$ & $0_1$ &$-1_1,1_2$\\ 
\hline
$12$     &$5$& $-2_2$ & $-1_1, -4_2$ & $-1_3, 1_2$\\ 
\hline
$14$     &$3$& $-3_2$ &$-1_1$  & $-1_1, 1_2$ \\ 
\hline
$16$     &$5$& $-5_2$ &$-2_1,-4_2$ & $-1_3, 1_2$\\ 
\hline
$18$     &$7$& $-5_4$ &$-1_1,-4_2$ & $-1_3, 1_4$\\ 
\hline
$20$     &$5$& $-3_2$ & $-2_1,-6_2$ & $-1_3, 1_2$\\ 
\hline
$22$     &$7$& $-1_2,-8_2$ &$-2_1,-4_2$ & $-1_3, 1_4$\\ 
\hline
$24$     &$9$& $-5_4$ & $-2_1,-6_2,-7_2$ & $-1_5, 1_4$\\ 
\hline
$26$     &$7$& $-\tfrac{11}{2}_4$ &$-2_1, -6_2$ & $-1_3, 1_4$\\ 
\hline
$28$     &$9$& $-1_2, -8_2$ & $-2_1,-5_2,-7_2$ & $-1_5, 1_4$\\ 
\hline
$30$     &$11$& $-4_2, -7_4$ &$-2_1,-6_2,-7_2$ & $-1_5, 1_6$\\ 
\hline
$32$     &$9$ & $-\tfrac{11}{2}_4$ & $-3_1,-5_2,-7_2$ & $-1_5, 1_4$\\ 
\hline
$34$     &$11$& $-\tfrac{11}{2}_4,-11_2$ &$-2_1,-5_2,-7_2$ & $-1_5, 1_6$\\ 
\hline
$36$     &$13$& $-4_2, -7_4$ & $-3_1,-6_2,-7_2,-10_2$ & $-1_7,1_6$ \\ 
\hline
$38$     &$11$& $-3_2, -8_4$ & $-3_1,-5_2,-7_2$  & $-1_5, 1_6$ \\ 
\hline
$40$     &$13$& $-\tfrac{11}{2}_4, -11_2$ & $-3_1,-6_2,-9_2,-10_2$ & $-1_7, 1_6$\\ 
\hline
\end{tabular}
\bigskip
\caption{$p$=2, $N=5$}
\end{table}
\end{center}

A priori, the strong statement of Conjecture \ref{con1} does not hold for $N=5$, but we still see relations between the slopes for $k$ and $k+6$ in the third column. Even more surprisingly,  the slopes for $N=3$ are exactly the ones appearing in the fourth column above, i.e. in the subspace defined by $W_5=-1$. 
Moreover, in the fifth column, we note that the Atkin-Lehner eigenvelues at $2$ from  the case $N=3$ appear as well. To formulate our observations more precisely, we let $\epsilon_W^\ast(k,2,5)$ denote the values in $\epsilon_W(k,2,5)$ not ``\emph{coming from}'' the case $N=3$.

\begin{con}
\label{con3}
For $k\geq 6$ we have
\[
\alpha_\mathcal{L}^-(k,2,5)= \alpha_\mathcal{L}(k,2,3).
\]
Moreover, for $k\in 2+4\ZZ$, $k\geq 6$, we have
\[
\alpha_\mathcal{L}^+(k,2,5)=\alpha_\mathcal{L}^+(k+6,2,5) \quad \text{and} \quad
\epsilon_W^\ast(k,2,5)=-\epsilon_W^\ast(k+6,2,5).
\]
\end{con}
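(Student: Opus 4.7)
The point of entry is the Greenberg–Stevens formula $\mathcal{L}_p(f)=-2\operatorname{\mathrm{dlog}}(a_p(\kappa))|_{\kappa=k}$ stated in the introduction. It identifies the $\mathcal{L}$-invariant of a newform $f$ with (twice the negative of) the infinitesimal variation of its $U_p$-eigenvalue along the $p$-adic family through $f$ on the Coleman–Mazur eigencurve. Consequently, the multiset $\alpha_{\mathcal{L}}(k,p,N)$ is determined by slopes of derivatives of $a_p$ along the irreducible components of the $p$-adic eigenvariety of tame level $N$ lying over weight $k$. Throughout, I would move all three statements of the conjecture to this family-theoretic setting and try to exhibit compatibilities between eigencurves.

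For the identity $\alpha_{\mathcal{L}}^{-}(k,2,5)=\alpha_{\mathcal{L}}(k,2,3)$, the first step is to compare dimensions: by the explicit formulas in \cite{ma} and the Jacquet–Langlands correspondence, $\dim H^1(\Gamma_{1,5}^{(2)},V_{k-2})^{W_5=-1}$ and $\dim H^1(\Gamma_{1,3}^{(2)},V_{k-2})$ should agree for $k\geq 6$. Then I would look for a Hecke-equivariant correspondence between the $W_5=-1$ new subspace at level $10$ and the full new subspace at level $6$, presumably arising from a $2$-adic level-raising congruence compatible with a sign condition at $5$. Lifting this to families, one would want the corresponding components of the two eigencurves (for definite quaternion algebras of discriminants $5$ and $3$) to be identified via an automorphic-induction-type bridge, so that the derivative of $a_p$ in weight matches on the nose. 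Since Teitelbaum's $\mathcal{L}$-operator is Hecke-equivariant, equality of such derivatives along matched components gives equality of the slope multisets.

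For the weight-periodicity $\alpha_{\mathcal{L}}^{+}(k,2,5)=\alpha_{\mathcal{L}}^{+}(k+6,2,5)$ and the sign flip $\epsilon_W^{\ast}(k,2,5)=-\epsilon_W^{\ast}(k+6,2,5)$ on the $W_2$-eigenvalues, the natural framework is the conjectural ghost-series picture of Bergdall–Pollack, which predicts period-$6$ behaviour for slopes on the $2$-adic eigencurve in small tame level. The shift $\kappa\mapsto \kappa\otimes\langle\cdot\rangle^{6}$ on $\mathcal{W}=\mathrm{Hom}_{\mathrm{cont}}(\ZZ_p^{\times},\QQ_p^{\times})$ acts trivially on slopes of a family but twists Atkin-Lehner eigenvalues by a character of order $2$ at $p=2$; this twist should account exactly for the sign reversal of $\epsilon_W^{\ast}$. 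The restriction $k\in 2+4\ZZ$ suggests that one needs to separate components of the eigencurve by the parity of $k/2$ modulo $4$, which is consistent with the structure of the weight space at the prime $2$.

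The main obstacle is twofold. First, even for $p=2$ the ghost-series conjecture and the Gouvea–Mazur style regularity of slopes along the eigencurve are largely open outside very small levels, and what is needed here is the stronger statement that derivatives of $a_p$ have the same slopes at matched weights. Second, the conjectured match between levels $6$ and $10$ would require a compatibility between two genuinely different eigenvarieties that, to my knowledge, is not explained by any known functoriality; producing it is likely to require either a new congruence between automorphic forms on distinct quaternion algebras or a conceptual interpretation of Teitelbaum's $\mathcal{L}$-invariant that is insensitive to the specific ramification set. Because of this, I would expect progress on the conjecture to come first through accumulating more computational evidence using the very algorithm developed here, refining the predicted periodicity and pinning down the precise isomorphism underlying the level $6$/level $10$ coincidence.
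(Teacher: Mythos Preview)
The statement you are addressing is a \emph{conjecture} in the paper, not a theorem: the paper offers no proof whatsoever, only the computational tables as evidence, together with the remark that ``at present, we do not have any explanation for the phenomena described in the above conjectures''. So there is no ``paper's own proof'' to compare against.

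Your proposal is accordingly not a proof either, and you are honest about this: you sketch a plausible family-theoretic framework via Greenberg--Stevens and the eigencurve, speculate about a level-raising bridge between discriminants $3$ and $5$, invoke ghost-series heuristics for the weight-$6$ periodicity at $p=2$, and then correctly identify the two serious obstacles (unproven slope regularity for derivatives of $a_p$, and the absence of any known functoriality linking the two quaternionic eigenvarieties). None of these steps is carried out; each is a pointer to where a proof might live rather than an argument. In particular, the claimed dimension equality $\dim H^1(\Gamma^{(2)}_{1,5},V_{k-2})^{W_5=-1}=\dim H^1(\Gamma^{(2)}_{1,3},V_{k-2})$ is asserted but not checked, and the ``automorphic-induction-type bridge'' is a hope, not a construction. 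Your closing sentence---that progress is most likely to come from further computation with the paper's algorithm---is exactly the author's own stance. So your write-up is a reasonable survey of the landscape around an open problem, but it should not be labeled a proof proposal; it is a research outline for a statement that remains, as the paper says, conjectural.
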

In particular, the first part of the above conjecture suggests very unexpected relations between newforms for different levels, while combining the second part with Conjecture \ref{con1} gives very explicit relations between slopes of different weights.
\begin{rem}
The relations between the eigenvalues of $W_2$ are now given by multiplication with $-1=(-1)^{k/2}$ which could be related to the fact that here we have $k\in 2+4\ZZ$. Note however, that $\epsilon_W^\ast(k,2,5)$ are \emph{not} the eigenvalues on the subspace defined by $W_5=1$ as one could naturally expect, but we merely removed the values appearing for $N=3$ (with multiplicity). The eigenvalues of $W_2$ on the subspace defined by $W_5=1$ are in fact always evenly distributed between $1$ and $-1$.
\end{rem}
Note that the statement on pairs of slopes for the first case is still valid, even though for $k=38$ we see that the slopes of one pair coincides with the slope of multiplicity one and for $k=22$ the multiplicity one slope is not the highest slope.\par
\vspace{\baselineskip}
Finally, we consider the case $N=7$.

\begin{center}
\begin{table}[h]

\begin{tabular}{|p{0.7 cm}|p{2 cm}||>{\centering\arraybackslash}p{4cm}|>{\centering\arraybackslash}p{4cm}||>{\centering\arraybackslash}p{1.8cm}|}
\hline
$k$ & $ d_k(2,7)$ & \multicolumn{2}{c||}{$\alpha_\mathcal{L}(k,2,7)$} & $\epsilon_W(k,2,7)$\\
& & \multicolumn{1}{c}{$\alpha_\mathcal{L}^+(k,2,7)$} & \multicolumn{1}{c||}{$\alpha_\mathcal{L}^-(k,2,7)$} & \\
\hline
$4$     &$2$& $1_1$ &$1_1$ & $-1_1,1_1$\\
\hline
$6$     &$2$& $0_1$ & $0_1$ & $-1_1, 1_1$\\
\hline
$8$     &$4$& $0_1,-1_1$ &$0_1,-1_1$ & $-1_3,1_1$\\ 
\hline
$10$     &$4$& $0_1,-1_1$ & $0_1,-1_1$ &$-1_1,1_3$\\ 
\hline
$12$     &$6$& $-1_1,-4_2$ & $-1_1, -4_2$ & $-1_3, 1_3$\\ 
\hline
$14$     &$6$& $-1_1,-4_2$ &$-1_1, -4_2$  & $-1_3, 1_3$ \\ 
\hline
$16$     &$8$& $-1_1, -2_1,-4_2$ &$-1_1,-2_1,-4_2$ & $-1_5, 1_3$\\ 
\hline
$18$     &$8$& $-1_1,-2_1,-4_2$ &$-1_1,-2_1,-4_2$ & $-1_3, 1_5$\\ 
\hline
$20$     &$10$& $-2_1,-4_2,-6_2$ & $-2_1,-4_2,-6_2$ & $-1_5, 1_5$\\ 
\hline
$22$     &$10$& $-2_1,-4_2,-6_2$ &$-2_1,-4_2, -6_2$ & $-1_5, 1_5$\\ 
\hline
$24$     &$12$& $-2_2,-6_2,-7_2$ & $-2_2,-6_2,-7_2$ & $-1_7, 1_5$\\ 
\hline
$26$     &$12$&  $-2_2,-6_2,-7_2$ &$-2_2, -6_2, -7_2$ & $-1_5, 1_7$\\ 
\hline
$28$     &$14$& $-2_1, -5_2, -6_2, -7_2$ & $-2_1,-5_2,-6_2,-7_2$ & $-1_7, 1_7$\\ 
\hline
$30$     &$14$& $-2_1, -5_2, -6_2, -7_2$ &$-2_1,-5_2,-6_2,-7_2$ & $-1_7, 1_7$\\ 
\hline
$32$     &$16$ & $-2_1,-3_1,-5_2,-7_4$ & $-2_1,-3_1,-5_2,-7_4$ & $-1_9, 1_7$\\ 
\hline
$34$     &$16$& $-2_1,-3_1,-5_2,-7_4$ & $-2_1,-3_1,-5_2,-7_4$ & $-1_7, 1_9$\\ 
\hline
$36$     &$18$& $-3_1, -5_2, -6_2, -7_2, -10_2$ & $-3_1,-5_2,-6_2,-7_2,-10_2$ & $-1_9,1_9$ \\ 
\hline
$38$     &$18$& $-3_1, -5_2, -6_2, -7_2, -10_2$& $-3_1, -5_2, -6_2, -7_2, -10_2$  & $-1_9, 1_9$ \\ 
\hline
$40$     &$20$& $-3_2, -6_2, -7_2, -9_2, -10_2$ & $-3_2, -6_2, -7_2, -9_2, -10_2$  & $-1_{11}, 1_9$ \\ 
\hline
\end{tabular}
\bigskip
\caption{$p$=2, $N=7$}
\end{table}
\end{center}

Even more clearly as above, we observe relations between slopes of different weights, in this case between successive weights. The Atkin-Lehner eigenvalues in the fifth column seem to behave similarly to the case $N=5$. Moreover, the third and fourth column are completely identical, which is rather different from the previous cases. However, the slopes appearing  in for $N=3$ are again appearing in the the fourth (and third) column. We collect our observations in the following.

\begin{con}~
\begin{itemize}
\item[(i)] We have $\alpha_\mathcal{L}^+(k,2,7)=\alpha_\mathcal{L}^-(k,2,7)$ for all $k\geq 4$.
\item[(ii)] The slopes apearing in $\alpha_\mathcal{L}(k,2,3)$ appear in $\alpha_\mathcal{L}^-(k,2,7)$ as well.
\item[(ii)] For $k\in 4\ZZ$, $k\geq 8$ we have
\[
\alpha_\mathcal{L}(k,2,7)=\alpha_\mathcal{L}(k+2,2,7) \quad \text{and} \quad
\epsilon_W(k,2,7)=-\epsilon_W(k+2,2,7).
\]
\end{itemize}
\end{con}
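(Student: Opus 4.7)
The plan is to reduce each assertion to structural statements about $p$-adic families of eigenforms and then exploit the Greenberg--Stevens formula
\[
\mathcal{L}_p(f)=-2\operatorname{\mathrm{dlog}}(a_p(\kappa))|_{\kappa=k}
\]
to translate between slopes along the eigencurve and slopes of $\mathcal{L}$-invariants. The key observation throughout is that for $f$ of weight $k$ with $a_p(f)=\varepsilon\cdot 2^{k/2-1}$, the $2$-adic valuation of $\mathcal{L}_p(f)$ equals $\nu_2(a_p'(k))-(k/2-1)$, so slope patterns of $\mathcal{L}$-invariants are governed by the local geometry of the quaternionic eigenvariety at points of exceptional slope.

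For part (iii), the period-$2$ phenomenon in the weight should come from the structure of weight space at $p=2$. Indeed, $\mathcal{W}=\mathrm{Hom}_{\mathrm{cont}}(\ZZ_2^\times,\QQ_2^\times)$ decomposes via $\ZZ_2^\times=\{\pm1\}\times(1+4\ZZ_2)$, and twisting by the nontrivial character of $\{\pm1\}$ shifts weights by $2$. On the quaternionic eigenvariety this twist should yield an automorphism that preserves the local Galois representation at $2$ up to an unramified twist, hence preserves $\nu_2(\mathcal{L}_p(f))$, while it multiplies $a_2$ by $-1$ and so flips $\varepsilon_W(k,2,7)$. A careful proof requires constructing this twist at the level of $\Gamma$-invariant harmonic cocycles, checking it preserves $p$-newness, and verifying it is a bijection on the new part at $N$.

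For parts (i) and (ii) I would seek symmetries and congruences. For (i), a natural candidate is twisting by the unique quadratic character of conductor dividing $7$: on the automorphic side it should interchange the $\pm1$ eigenspaces of $W_7$ while acting trivially on the local representation at $2$, forcing equality of the two slope sequences. For (ii), the appearance of the $N=3$ slopes inside $\alpha_\mathcal{L}^-(k,2,7)$ strongly suggests a level-raising congruence à la Ribet: every mod-$2$ Galois representation attached to a newform of level $6$ should admit a lift to a newform of level $14$ with the opposite sign of $W_7$, and one would then argue that matching residual representations force equality of $\mathcal{L}$-invariant slopes via rigidity of Coleman families.

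The principal obstacle will be translating Galois-theoretic or automorphic coincidences into exact slope equalities. Deducing slope identities from mod-$p$ congruences is essentially a strong refinement of the Gouvêa--Mazur conjecture, which is open in general; even granting the required twists and level-raising results, the passage from congruences to equalities of $\nu_2(a_p'(k))$ along Coleman families is not known to hold unconditionally. I therefore expect that the cleanest road to a rigorous proof (if any) goes through an \emph{a priori} analysis of the local geometry of the definite quaternionic eigenvariety at its exceptional-slope crossings, using the control theorem (Theorem \ref{stevens}) proved here to describe it explicitly, and that the proof will be only partial until such slope-control tools become available.
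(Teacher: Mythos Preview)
The statement you are attempting to prove is a \emph{conjecture}, not a theorem: the paper offers no proof whatsoever. Immediately after stating it the author writes ``At present, we do not have any explanation for the phenomena described in the above conjectures, but our computations give convincing evidence.'' There is therefore nothing in the paper to compare your proposal against; the only support the paper provides is the numerical table for $p=2$, $N=7$ up to $k=38$.

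Your proposal is, by your own admission, not a proof but a collection of heuristic strategies, and several of the heuristics are problematic even as heuristics. For part~(iii), the claim that twisting by the nontrivial character of $\{\pm1\}\subset\ZZ_2^\times$ shifts the classical weight by $2$ is not correct: the character $x\mapsto x^2$ is trivial on $\{\pm1\}$, so weights $k$ and $k+2$ lie on the \emph{same} component of $2$-adic weight space, and no character twist on weight space moves between them. For part~(i), twisting a newform of level $14$ by the quadratic character of conductor $7$ does not in general return a newform of level $14$; the conductor at $7$ can jump, so this does not give an involution on $\mathcal{S}_k(\Gamma_0(14))^{\mathrm{new}}$ exchanging the $W_7$-eigenspaces. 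For part~(ii), you correctly identify that passing from a mod-$p$ congruence to an exact equality of $\nu_2(\mathcal{L}_p)$ would require control of slopes well beyond what is currently known, which is precisely why the statement remains a conjecture.

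In short: there is no proof in the paper, your proposal is explicitly incomplete, and some of the proposed mechanisms (the weight-$2$ shift via $\{\pm1\}$, the conductor-$7$ twist) do not operate as you describe.
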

Again, we note that in this case \emph{all} slopes appear in pairs and are evenly distributed in the $\pm 1$ eigenspaces for $W_7$.

\begin{rem}
As above, by the explicit dimension formulas in \cite{ma} we can directly verify that  $d_k(2,7)=d_{k+2}(2,7)$ for $k$ as above.
\end{rem}

At present, we do not have any explanation for the phenomena described in the above conjectures, but our computations give convincing evidence. In light of the Greenberg-Stevens formula $\mathcal{L}_p(f)=-2\operatorname{\mathrm{dlog}}(a_p(\kappa))|_{\kappa=k}$ and similar observations for the slopes of the $U_p$-operator in \cite[Consequence 4.6]{buz} or \cite[Example 3]{vonk} that suggest relations between different eigencurves, it would be very interesting to compute the quantity  $a_p'(k)$ also at the boundary of the weight space. However, our method is naturally restricted to classical points. Another approach we are currently pursuing building on the methods of \cite{lau} and \cite{vonk} could address this question. First results for $N=1$ in that direction are promising. 

Finally, we should mention that the computations for $p=3$ have not revealed strong evidence for similar relations yet. However, due to the long running time of these computations the amount of data in this case is still very small ($k\leq 20$). Since the increment and the threshold after which the patterns are visible could possibly change with the prime $p$, we are not in a position to make a decisive statement on whether these phenomena are limited to the case $p=2$. We are in the process of gathering data for many other cases including $p=5$ as well.


\begin{thebibliography}{widest-label}

  \bibitem[AS86]{as}
   Ash, A., Stevens, G.:
   \emph{Cohomology of arithmetic groups and congruences between systems of Hecke eigenvalues}.
    J. Reine Angew. Math. \textbf{365} (1986), pp. 192--220.
  \bibitem[BDI10]{bdi}
   Bertolini, M., Darmon, H., Iovita, A.:
   \emph{Families of automorphic forms on definite quaternion algebras and Teitelbaum's conjecture}.
    Ast\'erisque \textbf{331} (2010), pp. 29--64.
  \bibitem[Bre10]{br}
   Breuil, C.:
   \emph{S\'erie sp\'eciale $p$-adique et cohomologie \'etale compl\'et\'ee}.
     Ast\'erisque \textbf{331} (2010), pp. 65--115.
 \bibitem[Buz05]{buz}
   Buzzard, K.:
   \emph{Questions about slopes of modular forms}.
   Ast\'erisque \textbf{298} (2005), pp. 1--15.
 \bibitem[Col94]{col}
   Coleman, R.\,J.:
   \emph{A $p$-adic Shimura isomorphism and $p$-adic periods of modular forms}.
     Contemp. Math. \textbf{165} (1994), pp. 21--51.
 \bibitem[Col10]{col10}
  Colmez, P.:
  \emph{Invariants $\mathcal{L}$ et d\'eriv\'ees de valeurs propres de Frobenius}.
Ast\'erisque \textbf{331} (2010), pp. 13--28.
 \bibitem[Dar01]{dar01}
   Darmon, H.:
   \emph{Integration on $\Hp\times\mathcal{H}$ and arithmetic applications}.
     Ann. of Math. (2) \textbf{154} (2001), no. 3, pp. 589--639.
 \bibitem[Dar04]{dar}
   Darmon, H.:
   \emph{Rational points on modular elliptic curves}.
     CBMS Regional Conference Series in Mathematics \textbf{101} (2004).
 \bibitem[DP06]{dp}
   Darmon, H., Pollack, R.:
   \emph{The efficient calculation of Stark-Heegner points via overconvergent modular symbols}.
     Israel Journal of Mathematics \textbf{153} (2006), pp. 319--354.
 \bibitem[DT08]{dt}
   Dasgupta, S., Teitelbaum, J.\,T.:
   \emph{The $p$-adic upper half plane}.
   Amer. Math. Soc. Univ. Lecture Ser. \textbf{45} (2008), pp. 65--121.
 \bibitem[dSh89]{des}
   de Shalit, E.:
   \emph{Eichler cohomology and periods of modular forms on $p$-adic Schottky groups}.
     J. Reine Angew. Math. \textbf{400} (1989), pp. 3--31.
 \bibitem[Dri76]{dri}
   Drinfeld, V.\,G.:
   \emph{Coverings of $p$-adic symmetric regions}.
     Functional. Anal. i Prilo\v{z}en. \textbf{10} (1976), no. 2, pp. 29--40.
 \bibitem[Eme05]{em}
  Emerton, B.:
   \emph{$p$-adic $L$-functions and unitary completions of representations of $p$-adic reductive groups}.
      Duke Math. J. \textbf{130} (2005), no. 2, pp. 353--392.
 \bibitem[FM14]{fm}
    Franc, C., Masdeu, M.:
  \emph{Computing fundamental domains for the Bruhat–Tits tree for $\GL$, $p$-adic automorphic forms, and the canonical embedding of Shimura curves}.
    LMS J. of Comput. Math. \textbf{17} (2014), pp. 1--23. 
 \bibitem[GvdP80]{gvdp}
  Gerritzen, L., van der Put, M.:
   \emph{Schottky groups and Mumford curves}.
     Springer Lecture Notes in Mathematics \textbf{817} (1980).
 \bibitem[Grä15]{mine}
  Gräf, P.\,M.:
   \emph{Teitelbaum's $\mathcal{L}$-invariant and rigid analytic automorphic forms}.
     Master's thesis, Heidelberg University (2015).
 \bibitem[Gre06]{grea}
  Greenberg, M.:
   \emph{Heegner point computations via numerical $p$-adic integration}.
     Lecture Notes in Computer Science \textbf{4076} (2006), pp. 361--376.
 \bibitem[Gre07]{grec}
  Greenberg, M.:
   \emph{Lifting modular symbols of noncritical slope}.
     Israel Journal of Mathematics \textbf{161} (2007), no. 1, pp. 141--155.
 \bibitem[GS93]{gs}
  Greenberg, R., Stevens, G.:
   \emph{$p$-adic $L$-functions and $p$-adic periods of modular forms}.
     Invent. Math. \textbf{111} (1993), no. 2, pp. 407--447.
  \bibitem[JL70]{jac}
  Jacquet, H., Langlands, R.\,P.:
   \emph{Automorphic forms on $\mathrm{GL}_2$}.
     Springer Lecture Notes in Mathematics \textbf{114} (1970).
  \bibitem[Kli94]{kli}
   Klingenberg, C.:
   \emph{On $p$-adic $L$-functions of Mumford curves}.
    Contemp. Math. \textbf{165} (1994), pp. 277--315.
 \bibitem[Koc88]{koc}
 Kochanek, U.:
   \emph{Über die Diagonalisierbarkeit globaler $p$-adischer Heckeoperatoren}.
     Diplomarbeit, Köln University (1988).
 \bibitem[Lau14]{lau}
    Lauder, A.:
  \emph{Computations with classical and $p$-adic modular forms}.
    LMS J. of Comput. Math. \textbf{14} (2011), pp. 214--231. 
  \bibitem[Mar05]{ma}
Martin, G.:
   \emph{Dimensions of the spaces of cusp forms and
newforms on $\Gamma_0(N)$ and $\Gamma_1(N)$}.
    Journal of Number Theory \textbf{112} (2005), pp. 298 -- 331.
 \bibitem[Maz94]{maz}
   Mazur, B.:
   \emph{On monodromy invariants occuring in global arithmetic, and Fontaine's theory}.
     Contemp. Math. \textbf{165} (1994), pp. 1--20.
 \bibitem[MTT86]{mtt}
  Mazur, B., Tate, J., Teitelbaum, J.\,T.:
   \emph{On $p$-adic analogues of the conjectures of Birch and Swinnerton-Dyer}.
     Invent. Math. \textbf{83} (1986), no. 1, pp. 1--48.
  \bibitem[Ort04]{ort}
 Orton, L.:
   \emph{An elementary proof of a weak exceptional zero conjecture}.
     Canad. J.  Math. \textbf{56} (2004), no. 2, pp. 373--405.
\bibitem[PP09]{pp}
  Pollack, D., Pollack, R.:
   \emph{A construction of rigid analytic cohomology classes for congruence subgroups of $\mathrm{SL}_3(\ZZ)$}.
     Canad. J. Math.  \textbf{61} (2009), no. 3, pp. 674--690.
 \bibitem[PS11]{ps}
  Pollack, R., Stevens, G.:
   \emph{Overconvergent modular symbols and $p$-adic $L$-functions}.
     Ann. Sci. Ec. Norm. Super. (4) \textbf{44} (2011), no. 1, pp. 1--42.
 \bibitem[Sch83]{sch}
  Schneider, P.:
   \emph{Rigid analytic $L$-transforms}.
     Springer Lecture Notes in Mathematics \textbf{1068} (1983), pp. 216--230.
 \bibitem[Ser80]{se}
   Serre, J.\,P.:
   \emph{Trees}.
     Springer Monographs in Mathematics (1980).
 \bibitem[S\textsuperscript{+}17]{sage}
  Stein, W. et al.:
   \emph{Sage Mathematics Software}.
   The Sage Development Team (2017), \url{http://www.sagemath.org}.
 \bibitem[Ste94]{ste}
  Stevens, G.:
   \emph{Rigid analytic modular symbols}.
    preprint (1994).
 \bibitem[Tei90]{tei90}
  Teitelbaum, J.\,T.:
   \emph{Values of $p$-adic $L$-functions and a $p$-adic Poisson kernel}.
     Invent. Math. \textbf{101} (1990), no. 2, pp. 395--410.
 \bibitem[Von15]{vonk}
    Vonk, J.:
  \emph{Computing overconvergent forms for small primes}.
    LMS J. of Comput. Math. \textbf{18} (2015), pp. 250--257. 
\end{thebibliography}
\end{document}